\documentclass[11pt]{article}

\usepackage{tikz}
\usetikzlibrary{decorations.pathreplacing,calc,arrows.meta}

\usepackage[a4paper,margin=3cm]{geometry}
\usepackage{amsmath,amssymb,amsthm,mathtools}
\usepackage{bm}
\usepackage{microtype}
\usepackage{enumitem}
\usepackage{hyperref,listings}
\usepackage[nameinlink,capitalise]{cleveref}

\hypersetup{
  colorlinks=true,
  linkcolor=blue,
  citecolor=blue,
  urlcolor=blue
}

\lstdefinelanguage{Maple}{
  morekeywords={
    if, then, else, elif, fi, do, od, for, from, to, by, while, 
    in, and, or, not, proc, end, local, global, return, cat, 
    subs, eval, diff, int, limit, series, map, zip, seq, add
  },
  sensitive=true,
  morecomment=[l]{\#},
  morestring=[b]",
  morestring=[b]',
  basicstyle=\ttfamily\small,
  keywordstyle=\color{blue}\bfseries,
  commentstyle=\color{black}\itshape,
  stringstyle=\color{red},
  showstringspaces=false,
  breaklines=true,
  frame=single
}

\numberwithin{equation}{section}

\newtheorem{theorem}{Theorem}[section]
\newtheorem{proposition}[theorem]{Proposition}
\newtheorem{lemma}[theorem]{Lemma}
\newtheorem{corollary}[theorem]{Corollary}

\theoremstyle{definition}
\newtheorem{definition}[theorem]{Definition}
\newtheorem{assumption}[theorem]{Assumption}

\theoremstyle{remark}
\newtheorem{remark}[theorem]{Remark}
\newtheorem{example}[theorem]{Example}

\newcommand{\R}{\mathbb{R}}
\newcommand{\T}{\mathbb{T}}
\newcommand{\Id}{\mathrm{Id}}
\newcommand{\tr}{\operatorname{tr}}
\newcommand{\diver}{\operatorname{div}}
\newcommand{\A}{\mathcal{A}}
\newcommand{\Lagr}{\mathcal{L}}
\newcommand{\Energy}{\mathcal{E}}
\newcommand{\eint}{e}
\newcommand{\Dcoef}{\mathfrak{D}}
\newcommand{\Ecoef}{\mathfrak{E}}
\newcommand{\Kcoef}{\mathfrak{K}}
\newcommand{\Icoef}{\mathfrak{I}}

\title{  Admissibility criteria for convex integration fan solutions  and contact discontinuities in the Euler equation}

\author{Heiko Gimperlein\thanks{Engineering Mathematics, University of Innsbruck, Innsbruck, Austria.\\ email: heiko.gimperlein@uibk.ac.at}}

\date{}

\begin{document}

\maketitle

\begin{abstract}
\noindent For piecewise constant fan subsolutions to the isentropic Euler equations, the entropy and action rates of any associated convex integration solution,
relative to a common reference solution, depend only on the fan and are computed explicitly. Moreover, these explicit expressions may be decomposed into a kinetic energy mismatch and an internal energy mismatch, from which we characterize agreement of Dafermos' entropy rate criterion with action rate criteria. The known disagreement for the Chiodaroli--Kreml two-shock data is recovered. For the planar contact reference, we prove that both criteria strictly prefer every convex integration solution associated with a strictly dissipative admissible fan to the classical contact discontinuity. Certified computations locate the Krupa--Sz\'{e}kelyhidi solutions in this region, where the criteria agree, while Horimoto's analytical construction provides such fans for every strictly increasing pressure law.
%For the contact-discontinuity solutions of Krupa and Sz\'{e}kelyhidi, certified exact-arithmetic computations show that both criteria prefer the convex integration solutions to the classical contact discontinuity. The same agreement holds, by a purely analytic argument, for Horimoto's contact-discontinuity solutions with arbitrary strictly increasing pressure laws.
\end{abstract}

\section{Introduction}
\label{sec:introduction}

The compressible Euler equations in two or more spatial dimensions admit nonunique bounded entropic weak solutions. In particular, for Riemann initial data, the well-developed theory of unique self-similar entropy solutions in one spatial dimension is in contrast with the infinitely many admissible weak solutions in higher dimensions, constructed by convex integration \cite{ChiodaroliDeLellisKreml, ChiodaroliKreml, DeLellisSzekelyhidi, KrupaSzekelyhidi,m21}.

This raises the basic question whether an additional selection criterion can be formulated that distinguishes  a unique solution. Dafermos' entropy rate criterion \cite{DafermosEntropyRate} has long served as one such candidate. It prefers solutions whose mechanical energy decreases at the greatest instantaneous rate. Related rate criteria based on the action were introduced and studied  \cite{GKKS1, GKKS2}, and both
families of criteria have been tested against convex integration solutions \cite{CFKM,ChiodaroliKreml,Dafermos26,Feireisl,GK,MP1,MP2}. In particular, Feireisl \cite{Feireisl} showed that solutions obtained by convex integration are not entropy rate admissible. Surprisingly, Chiodaroli and Kreml \cite{ChiodaroliKreml} then found Riemann data for which the entropy rate criterion prefers convex integration solutions to the
classical two-shock solution. For the same data the action rate criteria prefer the classical two-shock solution \cite{GKKS1,GKKS2}, whereas for other data, action criteria prefer suitably constructed convex integration solutions \cite{MP1, MP2}. Note that these are statements of a different kind than the non-admissibility result of \cite{Feireisl}: they compare convex integration solutions pairwise with one designated reference solution, rather than within a whole class. How the two pairwise criteria relate to each other, however, has remained unclear. 

The purpose of the present paper is to make this relationship explicit and readily computable. Action and entropy rate criteria involve the
same kinetic energy density and compare solutions with the same initial data, but differ in the sign of the thermodynamic contribution. For the convex integration solutions to the Euler  equations generated by an arbitrary piecewise constant fan subsolution, we show that both comparisons with a piecewise constant self-similar reference solution reduce to explicit sums over the fan parameters. These expressions decompose into a kinetic energy mismatch $\Kcoef$ and an internal energy mismatch $\Icoef$. The two criteria agree if and only if $\Kcoef$ dominates $\Icoef$: $$|\Icoef| < \frac{|\Kcoef|}{2}.$$
Figure \ref{fig:KI-phase-diagram} displays the resulting selection regions\footnote{In Figure \ref{fig:KI-phase-diagram} points are drawn in the correct direction from the origin, so that sector membership and the ratios $\Icoef/\Kcoef$ are exact, but radial distances are rescaled for legibility.}. There, the two-shock family of Chiodaroli and Kreml traces a curve which starts on the negative $\Kcoef$-axis, where both criteria prefer the reference solution. It crosses into the lower sector, where the entropy criterion prefers the convex integration solutions \cite[Theorem 2]{ChiodaroliKreml}, while action rate criteria continue to prefer the reference solution \cite[Theorem 6]{GKKS1}. 

\begin{figure}[htbp]
\centering
\begin{tikzpicture}[scale=0.81,>=Latex]

\begin{scope}
  \clip (-5.2,-3.8) rectangle (5.4,3.9);
  \fill[blue!10] (0,0) -- (100,0) -- (100,50) -- cycle;
\end{scope}

% Axes
\draw[->,thick] (-5.2,0) -- (5.4,0)
node[below right] {$\Kcoef$};
\draw[->,thick] (0,-3.8) -- (0,3.9)
node[above left] {$\Icoef$};

% Boundary lines I = +/- K/2
\draw[thick] (-5,-2.5) -- (5,2.5)
node[pos=0.93,above,sloped]
{$\Icoef=\frac{\Kcoef}{2}$};

\draw[thick] (-5,2.5) -- (5,-2.5)
node[pos=0.93,below,sloped]
{$\Icoef=-\frac{\Kcoef}{2}$};

% Region labels
\node[align=center] at (2.8,0)
{{both prefer fan}\\
$\Dcoef>0,\ \Ecoef<0$};

\node[align=center] at (-3.1,0)
{{both prefer reference}\\
$\Dcoef<0,\ \Ecoef>0$};

\node[align=center] at (0,2.55)
{{action rate prefers fan}\\
{entropy rate prefers reference}\\
$\Dcoef>0,\ \Ecoef>0$};

\node[align=center] at (0,-2.55)
{{action rate prefers reference}\\
{entropy rate prefers fan}\\
$\Dcoef<0,\ \Ecoef<0$};

\coordinate (KS) at (3.8,1.2331);
\fill (KS) circle (2.3pt);
\node[left=1pt,font=\small] at (KS) {\textcolor{blue}{\textbf{KS}}};

\draw[very thick,blue] plot[smooth] coordinates {(-4.600,-2.960)
(-4.314,-2.821) (-4.040,-2.683) (-3.778,-2.544) (-3.526,-2.405)
(-3.283,-2.266) (-3.047,-2.128) (-2.819,-1.989) (-2.598,-1.850)
(-2.382,-1.711) (-2.172,-1.573) (-1.966,-1.434) (-1.765,-1.295)
(-1.568,-1.156) (-1.375,-1.018) (-1.185,-0.879) (-0.998,-0.740)
(-0.815,-0.601) (-0.633,-0.463) (-0.455,-0.324) (-0.278,-0.185)
(-0.104,-0.046)};
\fill[blue] (-0.104,-0.046) circle (1.8pt);
\node[blue,font=\small] at (-3.5,-2.1) {\textbf{CK}};
\node[blue,font=\scriptsize,below right=-6pt] at (-0.4,-0.60)
{$\rho_1{\to}\rho_m$};

\fill[black] (4.9,0.108) circle (2.3pt);
\node[blue,font=\small,above=1.5pt] at (4.9,0.108) {\textbf{H}};

\coordinate (MP) at (5.268, -1.183);
\fill (MP) circle (2.3pt);
\node[left=1pt,font=\small] at (MP) {\textcolor{blue}{\textbf{MP}}};

\end{tikzpicture}
\caption{
Selection regions in the
$(\Kcoef,\Icoef)$-plane. Action and entropy criteria agree in the left and right sectors, with solutions for the classical contact discontinuity in the shaded region (Theorem \ref{thm:contact-agreement}). Point KS shows the location of the Krupa-Sz\'ekelyhidi solution \cite{KrupaSzekelyhidi} (Sections \ref{sec:KS-application}, \ref{sec:KS-stability}), H marks an example of a relaxed Horimoto fan \cite{Horimoto2026} (Section \ref{sec:Horimoto}). The solution in  point MP \cite{MP2} includes both shock and contact (Proposition \ref{prop:MP-point}). Curve CK shows a family of two-shock solutions from \cite{ChiodaroliKreml} crossing two selection regions (Example \ref{exampleKS}).}
\label{fig:KI-phase-diagram}
\end{figure}

The main application concerns the planar contact discontinuity. This solution is unique in one dimension and produces no entropy on the contact interface. For strictly dissipative fan solutions we find that both entropy and action rate criteria prefer the convex integration solutions to the one-dimensional self-similar solution and, specifically, that these solutions lie in the shaded sector in Figure \ref{fig:KI-phase-diagram}. This explains the common outcome of the Krupa-Sz\'ekelyhidi and Horimoto constructions \cite{KrupaSzekelyhidi,Horimoto2026}. The numerical Krupa--Sz\'ekelyhidi construction and its certification quantify the location of one particular fan, while Horimoto's construction shows
that the same conclusion holds analytically for arbitrary strictly increasing pressure laws.
Convex integration solutions from \cite{MP2} include both shock and contact and lie outside the shaded sector, but are still preferred by both entropy and action rate criteria. %If the contact discontinuity were to describe the correct physics in $2$d, a new criterion would  be required. 
Correspondingly,  Krupa and Sz\'{e}kelyhidi \cite{KrupaSzekelyhidi} suggest that the non-uniqueness of ``wild'' solutions reflects the strong instability of the one-dimensional Riemann solution with respect to genuine
multidimensional perturbations. It may also be indicative of a compressible Kelvin-Helmholtz instability and the onset of turbulence. The results of this article are consistent with this picture: rather than ``failing'', the selection criteria may correctly reflect the physics by preferring dissipating, two-dimensional solutions to the non-dissipative planar contact.\\

The main contributions of this paper are the following.\\

\vspace*{-0.3cm}

\noindent 1)  
For arbitrary piecewise constant fan subsolutions, we derive explicit finite-dimensional formulas for the differences between action functionals and between entropy rate functionals. Two coefficients determine the comparison: the energy rate difference $\Ecoef$ between the fan and reference solutions, and the corresponding action rate difference $\Dcoef$. The entropy rate criterion strictly prefers the fan solutions if $ \Ecoef<0$, while the action rate criterion strictly prefers the fan solutions if $\Dcoef>0$. The two coefficients admit the decomposition $\Dcoef=\frac12\Kcoef+\Icoef$, $\Ecoef=-\frac12\Kcoef+\Icoef,$ into a weighted kinetic energy mismatch $\Kcoef$  and a weighted internal energy mismatch $\Icoef$.  The criteria agree if and only if $|\Icoef|<|\Kcoef|/2$. For the classical planar contact, Theorem \ref{thm:contact-agreement} shows that strictly dissipative fan solutions satisfy $0\leq \Icoef < \Kcoef/2$. In particular, such solutions lie in the shaded right hand sector in Figure \ref{fig:KI-phase-diagram}, and both admissibility criteria prefer these convex integration solutions to the classical contact discontinuity. %Examples include the Krupa-Sz\'ekelyhidi and Horimoto solutions. %Their disagreement in the Chiodaroli-Kreml two-shock family (Example \ref{exampleKS}), in the lower sector in Figure \ref{fig:KI-phase-diagram}, recovers the results of \cite{ChiodaroliKreml} and \cite{GKKS1}.
\\

\vspace*{-0.3cm}

\noindent 2) The general theorem is applied quantitatively to the numerically constructed solutions for the contact discontinuity from \cite{KrupaSzekelyhidi}, as well as to the analytical solutions from  \cite{Horimoto2026} for a general class of pressure laws. Ancillary codes rigorously verify the numerical results for the solutions from  \cite{KrupaSzekelyhidi} using exact rational arithmetic with integer-square-root enclosures and $LDL^{\top}$ certificates. Both stability and verification are formulated abstractly, irrespective of the existence of a solution. In this way the results allow one to certify conclusions for any given solution. \\

\vspace*{-0.1cm}

Note that the criteria studied here apply to the class of bounded entropic weak solutions. A complementary viewpoint suggests that the wild non-uniqueness instead calls for a revised concept of solutions, such as measure-valued or dissipative solutions \cite{BFH,GSGW}. The present results are compatible with this view and quantify it: by Proposition \ref{prop:rigidity-fixed-fan} the comparisons depend only on the fan subsolution, i.e.~the averaged, measure-valued description of the oscillatory solutions, and the outcome at the contact discontinuity clarifies the scope of what rate-based selection can achieve within this class.\\

\vspace*{-0.2cm}

\noindent \emph{Outline of this article:} Section \ref{sec:equations-fans} recalls the Euler equations, admissibility criteria,
and the properties of convex integration solutions associated to fan subsolutions. Section \ref{sec:finite-dimensional-formulas}  derives explicit formulas for the action and entropy rate in terms of the finite number of fan and reference parameters. Section \ref{sec:comparisonstability} establishes the decomposition of the rate coefficients, characterizes the regions of agreement and disagreement between the two criteria, and shows their stability under parameter perturbations. The decomposition is illustrated for the two-shock solutions from \cite{ChiodaroliKreml}. The general theory is then applied to the Krupa-Sz\'ekelyhidi fan solutions in Sections \ref{sec:KS-application} and \ref{sec:KS-stability}. These sections also discuss the passage from approximate numerical solutions to exact solutions. The symbolic verification and certification of the numerical results is the content of Appendix \ref{sec:symbolic-verification}. Section \ref{sec:Horimoto} discusses Horimoto's solutions for the contact discontinuity.\\

\vspace*{-0.2cm}

\noindent \emph{Acknowledgements:} The author thanks Marshall Slemrod for pointing out this problem and Michael Grinfeld, Robin J.~Knops, Valentin Pellhammer and Marshall Slemrod for feedback and discussions.

\section{The compressible Euler equations: admissibility criteria and fan subsolutions}
\label{sec:equations-fans}

\textbf{The isentropic Euler equations.} We consider the compressible barotropic Euler equations for the scalar density $\rho(x,t)>0$ and vector velocity $v$ at time $t \in (0,T)$ and  $x\in \Omega \subset \mathbb{R}^2$ given by
\begin{align}  \partial_t\rho+\diver_x(\rho v)&=0,  \label{eq:continuity}  \\
  \partial_t(\rho v)  +  \diver_x(\rho v\otimes v) +  \nabla_xp(\rho)&=0, \label{eq:momentum}
\end{align}
with prescribed initial conditions
\begin{equation}
\label{baric} (\rho,\,v)(0,\cdot)=(\rho_{0},v_{0}). 
\end{equation}
The constitutively defined pressure $p=p(\rho)$, assumed to be sufficiently smooth and monotonically increasing ($p'(\rho)>0$), is related to the specific internal energy $\eint = \eint(\rho)$ by $p(\rho)=\rho^2\eint'(\rho)$.
The energy density is then given by the pointwise sum of kinetic and potential energy:
\begin{equation}
  H(\rho,v)  :=  \frac12\rho|v|^2+\rho\eint(\rho). \label{eq:mechanical energy}
\end{equation}
Smooth solutions of the Euler equations \eqref{eq:continuity}, \eqref{eq:momentum} satisfy a local energy-entropy identity
\begin{equation}  \label{eq:energy-equality}
  \partial_tH(\rho,v)  +  \diver_x  \left[    \bigl(H(\rho,v)+p(\rho)\bigr)v  \right]  =0.
\end{equation}
\begin{remark}
Note that the relation $p(\rho)=\rho^2\eint'(\rho)$ only determines $\eint$ up to an additive constant. 
It is readily verified that \eqref{eq:energy-equality} and later conclusions in this paper are independent of this choice.
\end{remark}
We consider \emph{entropic} weak solutions to the  initial value problem \eqref{eq:continuity}-\eqref{baric}, that is, weak solutions that satisfy an energy-entropy condition in the sense of distributions (see, for example, \cite{ChiodaroliKreml,DafermosEntropyRate,m21}).
\begin{definition}Recall that $\Omega \subset \R^2$.\\
a) The functions $(\rho,v)  \in  L^\infty(\Omega\times(0,T))  \times  L^\infty(\Omega\times(0,T);\R^2)$ with $\rho>0$ a.~e.~define a weak solution of \eqref{eq:continuity}-\eqref{baric}, if the equations hold in the sense of
distributions.

\noindent b) A weak solution $(\rho,v)$ is said to be entropic if it satisfies
\begin{equation}\label{eq:energy-inequality}   \partial_tH(\rho,v)  + \diver_x
  \left[    \bigl(H(\rho,v)+p(\rho)\bigr)v  \right]  \leq 0  
\end{equation}
in the sense of distributions.
\end{definition}
Inequality \eqref{eq:energy-inequality} describes the non-increase of the total mechanical energy.
In more than one spatial dimension, however, it does not imply uniqueness of solutions. Consequently, it is desirable to study admissibility criteria that select a particular solution. \\

\noindent \textbf{Admissibility criteria.} In a domain $\Omega \subset \mathbb{R}^2$ Dafermos' entropy rate criterion \cite{DafermosEntropyRate} compares the instantaneous right derivatives of the total entropy, defined by:
\begin{equation}  \label{eq:total-energy}
  \Energy(t)  := \Energy(\rho,v,t)  :=  \int_\Omega H(\rho,v)(x,t)\,dx,
\end{equation}
provided the integral is finite. Since the physical entropy for the
barotropic Euler system is the mechanical energy, the preferred
solution is the one with the smallest energy rate.

We formulate this criterion as a pairwise comparison.
\begin{definition}
Let $S$ be a set of entropic weak solutions of the initial value problem \eqref{eq:continuity}-\eqref{baric}, for given initial data.\\
$(i)$ A solution $(\rho^{(1)},v^{(1)}) \in S$ is preferred to $(\rho^{(2)},v^{(2)})\in S$ by the 
entropy rate  criterion at time $t$, when
$$  \frac{d}{dt}\Big|_{t^+}\left(  \Energy(\rho^{(1)},v^{(1)},t)- \Energy(\rho^{(2)},v^{(2)},t)\right) \leq 0.$$
When strict inequality holds, then $(\rho^{(1)},v^{(1)})$ is strictly preferred to $(\rho^{(2)},v^{(2)})$.\\
$(ii)$ A solution $(\rho^{(1)},v^{(1)}) \in S$ is (strictly) entropy rate admissible in $S$ at time $t$, when for every  $(\rho^{(2)},v^{(2)}) \in S$ with $(\rho^{(2)},v^{(2)}) \neq (\rho^{(1)},v^{(1)})$, it is (strictly) preferred to $(\rho^{(2)},v^{(2)})$ by the entropy rate  criterion at time $t$. 
\end{definition}
Often, $t$ is taken to be the initial time $t=0$. Since the results of this article are independent of $t$, dependence upon this variable is omitted. \\

Note that a unique entropic weak solution is entropy rate admissible as the comparison set $S$ becomes a singleton. In a class $S$ of all entropic weak solutions to \eqref{eq:continuity}-\eqref{baric} constructed by convex
integration, under some mild restrictions Feireisl showed in \cite{Feireisl} that there is no entropy rate admissible solution. Surprisingly, Chiodaroli and Kreml \cite{ChiodaroliKreml} then found a two-dimensional Riemann problem in which
convex integration solutions could be preferred, according to the
entropy rate criterion, to the perhaps physically expected two shock
solution. Their results motivated the introduction of alternative admissibility criteria \cite{GKKS1, GKKS2}, which were shown to prefer the two shock solution for certain Riemann data. These criteria are based on the action
\begin{equation}
  \A(t):= \A(\rho,v,t) :=  \int_{0}^t \int_\Omega  \Lagr(\rho,v)\,dx\,dt,  \label{eq:action}
\end{equation}
instead of energy. Here, the Lagrangian $\Lagr$ denotes the pointwise difference between kinetic and potential energy, 
\begin{equation}\label{eq:lagrangian-density}
  \Lagr(\rho,v)  :=  \frac12\rho|v|^2-\rho\eint(\rho).
\end{equation}

\begin{definition}[\cite{GKKS2}]
Let $S$ be a set of entropic weak solutions of the initial value problem \eqref{eq:continuity}-\eqref{baric}, for given initial data.\\
$(i)$ A solution $(\rho^{(1)},v^{(1)}) \in S$ is preferred to $(\rho^{(2)},v^{(2)})\in S$ under the action rate criterion LAAP${}_0$,  when there exists a time
  $t_{1}=t_{1}(\rho^{(2)},v^{(2)})>0$ such that
  $$\A(\rho^{(1)},v^{(1)},t) - \A(\rho^{(2)},v^{(2)},t) \le 0\qquad \text{for all }
  t\in (0,t_{1}).$$
When strict inequality holds for some $t\in (0,\,t_{1})$, then $(\rho^{(1)},v^{(1)})$ is strictly preferred to $(\rho^{(2)},v^{(2)})$.\\
$(ii)$ A solution $(\rho^{(1)},v^{(1)}) \in S$ is (strictly) action rate, or LAAP${}_0$, admissible in $S$, when for every  $(\rho^{(2)},v^{(2)}) \in S$ with $(\rho^{(2)},v^{(2)}) \neq (\rho^{(1)},v^{(1)})$, it is (strictly) preferred to $(\rho^{(2)},v^{(2)})$ by the 
action rate criterion. 
\end{definition}

In unbounded spatial domains, such as $\Omega = \mathbb{R}^2$, individual actions and total energies may be infinite. In this case energies and actions are compared for solutions which agree outside a bounded set, see Remark \ref{rem:renormal}. \\

\noindent \textbf{Riemann data and fan subsolutions.} It is convenient to record several definitions and known results required subsequently. Thus, Riemann initial data is specified by:
\begin{equation}
  (\rho_0,v_0)  =
  \begin{cases}
(\rho_{-},\,{{v}_{-}}), & x_{2}<0,\ -\infty<x_{1}<\infty, \\
(\rho_{+},\,{{v}_{+}}),& x_{2}>0,\  -\infty <x_{1}<\infty,
  \end{cases}
  \label{eq:riemann-data}
\end{equation}
with a discontinuity at the line $x_2=0$. Here, $\rho_\pm, v_\pm$ are constants. 

The families of non-unique entropic weak solutions constructed by convex integration \cite{ChiodaroliKreml, KrupaSzekelyhidi, m21} replace the trace free part of
$$  v\otimes v =  \left(    v\otimes v-\frac{|v|^2}{2}\Id \right)+\frac{|v|^2}{2}\Id$$
by an independent field $$
  u\in\mathcal S_0^{2\times2} :=  \left\{    u\in\R^{2\times2}:    u^\top=u,\ \tr u=0  \right\}.$$

\begin{definition}
A triple $(v,u,C)  \in  \R^2\times\mathcal S_0^{2\times2}\times(0,\infty)$ satisfies the strict subsolution condition, if
\begin{equation}
  \frac{C}{2}\Id - v\otimes v+u  \label{eq:strict-subsolution-condition}
\end{equation}
is  positive definite.
\end{definition}
On taking the trace in \eqref{eq:strict-subsolution-condition}, we conclude that
$|v|^2<C$, and therefore $ \frac12\rho(C-|v|^2)$
may be interpreted as additional kinetic energy from unresolved oscillations.

A piecewise constant fan subsolution to the Riemann problem is defined next. The fan ansatz partitions spacetime into self-similar conical regions $P_-,P_1,\ldots,P_N,P_+$, separated by the lines $x_2=\nu_i t$, on each of which the coarse density, velocity, and relaxed stress are constant. The intermediate regions $P_i$ are the zones where convex-integration oscillations can be inserted.
 
\begin{definition}[\cite{ChiodaroliKreml,KrupaSzekelyhidi}]\label{def:fan}
An admissible piecewise constant fan subsolution $  (\bar\rho,\bar v,\bar u)$ associated with the Riemann data
\eqref{eq:riemann-data} consists of constants
$$  \rho_i>0,  \qquad  v_i\in\R^2, \qquad  u_i\in\mathcal S_0^{2\times2},  \qquad C_i>0, \qquad (i=1,\ldots,N)$$
and fan speeds
$\nu_0<\nu_1<\cdots<\nu_N$,
such that the piecewise constant density
\begin{equation*}
  \bar\rho  =  \rho_-\mathbf 1_{P_-}  +  \sum_{i=1}^N  \rho_i\mathbf 1_{P_i} +  \rho_+\mathbf 1_{P_+},
\end{equation*}
the piecewise constant velocity
\begin{equation*}
  \bar v  =  v_-\mathbf 1_{P_-}  +  \sum_{i=1}^N  v_i\mathbf 1_{P_i}  +  v_+\mathbf 1_{P_+},
\end{equation*}
and the corresponding relaxed stress satisfy the relaxed Euler
equations, the Rankine--Hugoniot relations, and the admissibility
inequalities at every interface, with
\begin{equation}
  v_i\otimes v_i-u_i  < \frac{C_i}{2}\Id  \qquad  (i=1,\ldots,N).
  \label{eq:fan-strict-subsolution}
\end{equation}
Here, we denote by
\begin{align*}
  P_-  :=  \{(x,t):t>0,\ x_2<\nu_0t\},  P_+  :=  \{(x,t):t>0,\ x_2>\nu_Nt\},
\end{align*}
the  exterior regions and set
\begin{equation*}
  P_i  :=  \{(x,t):t>0,\ \nu_{i-1}t<x_2<\nu_it\},  \qquad  i=1,\ldots,N.
\end{equation*}
\end{definition}
The detailed algebraic form of the relaxed equations here depends on the
chosen convex integration formulation. \\

Convex integration shows that for every admissible piecewise constant fan subsolution $  (\bar\rho,\bar v,\bar u)$  %satisfying \eqref{eq:fan-strict-subsolution}. 
there exist infinitely many bounded entropic weak solutions
$  (\rho,v)$ of the isentropic Euler equations with the same Riemann data such that
$\rho=\bar\rho$ almost everywhere. The solutions agree with the Riemann data in $P_-\cup P_+$,
and $|v|^2=C_i$ almost everywhere in $P_i$, $i=1,\ldots,N$.\\

It is assumed for the admissibility comparisons stated below, that for all $i=1,\ldots,N$:
\begin{enumerate}[label=\textup{(\roman*)}]
  \item $ \rho|_{P_i}$ is constant and $( \rho,  v)|_{P_\pm}$ agrees with the prescribed Riemann data,
  \item convex integration produces bounded entropic weak solutions, and
  \item $  |v|^2=C_i$ almost everywhere in $P_i$.
\end{enumerate}

For later use, define in the respective regions $P_i$ the fan Lagrangian and mechanical energy densities to be
\begin{align}
  \Lambda_i  &:=  \frac12\rho_iC_i-\rho_i\eint(\rho_i), \ \   H_i
  :=  \frac12\rho_iC_i+\rho_i\eint(\rho_i) \qquad (i=1,\ldots,N), \label{eq:fanprof1}\\
  \Lambda_\pm  &:=  \frac12\rho_\pm|v_\pm|^2-\rho_\pm\eint(\rho_\pm),\ \ 
  H_\pm  :=  \frac12\rho_\pm|v_\pm|^2+\rho_\pm\eint(\rho_\pm). \label{eq:fanprof2}
\end{align}

Every convex integration solution associated with the same fan subsolution therefore has the same piecewise constant Lagrangian and mechanical energy profiles, $\Lambda_i, \Lambda_\pm$, respectively $H_i, H_\pm$, in similarity coordinates.

\begin{remark}\label{rem:renormal} As remarked above, in the unbounded spatial domain $\Omega = \mathbb{R}^2$ energies and actions may be infinite.
For solutions associated with the same Riemann data, the differences between the energy and action may still be defined by their values on any sufficiently large, truncated domain. For explicit calculations, we therefore work on a periodic strip
$  \Omega_\ell := \T_\ell\times\R$, where
$\T_\ell:=\R/(\ell\mathbb Z)$,
$\ell>0$ sufficiently large. The energy and action differences are then given by \begin{align}
\Energy(\rho^{(1)},v^{(1)},t)-  \Energy(\rho^{(2)},v^{(2)},t) 
& =\int_{\Omega_\ell}  \left[    H(\rho^{(1)},v^{(1)}) - H(\rho^{(2)},v^{(2)})\right]dx, \label{eq:renormalized-energy-difference}\\
  \A(\rho^{(1)},v^{(1)},t)  - \A(\rho^{(2)},v^{(2)},t) &=  \int_0^t\int_{\Omega_\ell} \left[  \Lagr(\rho^{(1)},v^{(1)}) -\Lagr(\rho^{(2)},v^{(2)}) \right] dx\,dt.  \label{eq:renormalized-action-difference}
\end{align}
\end{remark}

\begin{proposition}
\label{prop:rigidity-fixed-fan}
Let $(\rho,v)$ and  $(\widetilde\rho,\widetilde v)$ be two convex integration realizations generated by the same
admissible fan subsolution. Then
\begin{equation*}
  \Lagr(\rho,v)  =   \Lagr(\widetilde\rho,\widetilde v), \ \   H(\rho,v) =  H(\widetilde\rho,\widetilde v)  \qquad\text{almost everywhere}.
\end{equation*}
Consequently, $(\rho,v)$ and  $(\widetilde\rho,\widetilde v)$ have the same action on every bounded
spacetime set and the same  total-energy rate relative to
any common reference solution.
\end{proposition}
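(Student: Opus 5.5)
The plan is to verify the pointwise identities region by region, using the partition of spacetime into $P_-,P_1,\dots,P_N,P_+$ from Definition \ref{def:fan}, and then integrate. The interfaces $x_2=\nu_i t$ are finitely many lines in each time slice, hence a null set in spacetime. So it is enough to establish $\Lagr(\rho,v)=\Lagr(\widetilde\rho,\widetilde v)$ and $H(\rho,v)=H(\widetilde\rho,\widetilde v)$ almost everywhere in each open region separately.

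In the exterior regions $P_\pm$, assumption (i) gives $(\rho,v)=(\widetilde\rho,\widetilde v)=(\rho_\pm,v_\pm)$ almost everywhere. Hence both densities equal $\Lambda_\pm$ and $H_\pm$ from \eqref{eq:fanprof2}.

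In an intermediate region $P_i$, assumption (i) (equivalently $\rho=\bar\rho$ a.e.) gives $\rho=\widetilde\rho=\rho_i$. Assumption (iii) gives $|v|^2=|\widetilde v|^2=C_i$ almost everywhere. Both $\Lagr$ in \eqref{eq:lagrangian-density} and $H$ in \eqref{eq:mechanical energy} depend on $v$ only through $|v|^2$. Therefore both realizations give $\Lagr=\Lambda_i$ and $H=H_i$ as in \eqref{eq:fanprof1}, almost everywhere in $P_i$. The velocity directions may oscillate wildly and differ between the two realizations. This is the only genuine point of the argument: it is harmless precisely because the densities see only the speed.

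For the consequences, the action on a bounded spacetime set $B$ is $\int_B\Lagr\,dx\,dt$. Its integrand agrees almost everywhere for the two realizations, so the two actions coincide.

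For the energy rate, fix a common reference solution $(\rho^{(r)},v^{(r)})$ with the same Riemann data, and work on $\Omega_\ell$ as in Remark \ref{rem:renormal}. By Fubini, for almost every $t$ the slices of the two energy densities agree for a.e. $x$. A cleaner route is to note that both densities equal the explicit self-similar profile $\sum \Lambda$ (respectively $\sum H$) times indicator functions of the regions, so they agree for every $t>0$. The renormalized differences \eqref{eq:renormalized-energy-difference} with respect to the reference are then identical functions of $t$, and so are their right derivatives.

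The only subtlety I expect is this a.e.-in-spacetime versus every-$t$ issue. I would resolve it by replacing each solution's energy density by its profile representative $\Lambda_\pm,\Lambda_i$ and $H_\pm,H_i$, which is the natural one given the self-similar structure stated after \eqref{eq:fanprof2}.
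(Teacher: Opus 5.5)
Your proposal is correct and follows essentially the same route as the paper: region by region, $\rho=\widetilde\rho=\bar\rho$ and $|v|^2=|\widetilde v|^2=C_i$ a.e.\ in each $P_i$, both $\Lagr$ and $H$ depend on $v$ only through $|v|^2$, and the two realizations coincide with the Riemann data in $P_\pm$. Your additional remark on passing from a.e.\ equality in spacetime to equality of the energy slices for every $t>0$, by choosing the self-similar profile representative, is handled correctly; the paper does not address this point explicitly.
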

\begin{proof}
Note that $  \rho=\widetilde\rho=\bar\rho$ and that in every region $P_i$,
$ |v|^2=|\widetilde v|^2=C_i$ almost everywhere.
Therefore $ \Lagr(\rho,v) = \frac12\rho_iC_i-\rho_i\eint(\rho_i) =  \Lagr(\widetilde\rho,\widetilde v)$
and $ H(\rho,v) =  \frac12\rho_iC_i+\rho_i\eint(\rho_i) = H(\widetilde\rho,\widetilde v)$
almost everywhere in $P_i$. In  exterior regions, both solutions agree with the prescribed Riemann states.
\end{proof}

\section{Action and entropy rate formulas} \label{sec:finite-dimensional-formulas}

This section computes the action and energy differences between a convex integration fan solution and an arbitrary
piecewise constant self-similar reference solution. The differences reduce to simple expressions in the similarity variable
$\xi=\frac{x_2}{t}$. Note, for example, that with respect to this coordinate the fan region $P_i$ corresponds to $ \nu_{i-1}<\xi<\nu_i$.

A piecewise constant self-similar reference solution $(\rho^{\mathrm{ref}},v^{\mathrm{ref}})$ will be defined in terms of a partition $ -\infty=\sigma_0<\sigma_1<\cdots<\sigma_M=\infty$
of the $\xi$-axis. In an obvious notation, it takes the form
\begin{equation*}
  (\rho^{\mathrm{ref}},v^{\mathrm{ref}})(x,t) =  (\rho_j^{\mathrm{ref}},v_j^{\mathrm{ref}})
  \qquad  \text{whenever }  \sigma_{j-1} < \xi =  \frac{x_2}{t} <  \sigma_j \qquad (j=1,\ldots,M).
\end{equation*}
Furthermore, the mechanical energy and Lagrangian densities, given by  
\begin{equation}
  H_j^{\mathrm{ref}} :=  \frac12  \rho_j^{\mathrm{ref}}  |v_j^{\mathrm{ref}}|^2 +  \rho_j^{\mathrm{ref}} \eint(\rho_j^{\mathrm{ref}}), \quad
  \Lambda_j^{\mathrm{ref}}  :=  \frac12   \rho_j^{\mathrm{ref}}  |v_j^{\mathrm{ref}}|^2 -  \rho_j^{\mathrm{ref}}  \eint(\rho_j^{\mathrm{ref}}), \label{eq:Href-Lambdaref}
\end{equation}
determine piecewise constant profiles
\begin{equation*}
  \Lambda^{\mathrm{ref}}(\xi) =  \Lambda_j^{\mathrm{ref}}, \quad  H^{\mathrm{ref}}(\xi) =  H_j^{\mathrm{ref}}, \quad  \text{for }\ \sigma_{j-1}<\xi<\sigma_j \quad (j=1,\ldots,M).
\end{equation*}
The corresponding profiles of the fan are
\begin{equation}
  \Lambda^{\mathrm{fan}}(\xi)  =
  \begin{cases}
    \Lambda_-,      &\xi<\nu_0,\\[1mm]
    \Lambda_i,      &\nu_{i-1}<\xi<\nu_i,\\[1mm]
    \Lambda_+,      &\xi>\nu_N,
  \end{cases}
 \quad  H^{\mathrm{fan}}(\xi) =
  \begin{cases}
    H_-,      &\xi<\nu_0,\\[1mm]
    H_i,      &\nu_{i-1}<\xi<\nu_i,\\[1mm]
    H_+,      &\xi>\nu_N,
  \end{cases}\label{eq:Hfan-Lambdafan}
\end{equation}
as defined in \eqref{eq:fanprof1}, \eqref{eq:fanprof2}, with $i=1,\ldots,N$.

We assume throughout that the reference and fan subsolutions correspond to the same Riemann data, so that
$  H^{\mathrm{ref}}(\xi)= H^{\mathrm{fan}}(\xi)$
and
$\Lambda^{\mathrm{ref}}(\xi) =  \Lambda^{\mathrm{fan}}(\xi)$
for all sufficiently large $|\xi|$.\\

For the energy we compute using \eqref{eq:renormalized-energy-difference}:
\begin{theorem}
\label{thm:finite-dimensional-energy}
Let $ (\rho,v)$
be any convex integration realization generated by the fan subsolution,
and let $(\rho^{\mathrm{ref}},v^{\mathrm{ref}})$ be a piecewise constant self-similar reference solution with the same
Riemann data. Then
\begin{equation}
   \Energy(\rho,v,t) - \Energy(\rho^{\mathrm{ref}},v^{\mathrm{ref}},t) = \ell t\Ecoef,  \quad \text{where}\ \ \Ecoef :=  \int_{\R}  \left[        H^{\mathrm{fan}}(\xi)  - H^{\mathrm{ref}}(\xi) \right]d\xi.   \label{eq:energy-difference-general}
\end{equation}
independently of $\ell>0$ and $t>0$, the fan has strictly smaller energy rate than the reference solution if and only if $\Ecoef<0$, and strictly larger energy rate if and only if $\Ecoef>0$.
\end{theorem}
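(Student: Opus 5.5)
We reduce the energy difference to an integral over the similarity variable $\xi=x_2/t$ and then apply the renormalized energy difference of Remark \ref{rem:renormal}.

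\textbf{Step 1: identify the integrand.} We first apply Proposition \ref{prop:rigidity-fixed-fan}. For almost every $(x,t)$, the energy density $H(\rho,v)(x,t)$ of any convex integration realization equals the fan profile $H^{\mathrm{fan}}(x_2/t)$ from \eqref{eq:Hfan-Lambdafan}. In $P_i$ this uses $\rho=\rho_i$ and $|v|^2=C_i$ almost everywhere. In $P_\pm$ the solution coincides with the Riemann data. Likewise, $H(\rho^{\mathrm{ref}},v^{\mathrm{ref}})(x,t)=H^{\mathrm{ref}}(x_2/t)$ by the piecewise constant self-similar structure of the reference solution. Substituting into \eqref{eq:renormalized-energy-difference} gives
\[
\Energy(\rho,v,t)-\Energy(\rho^{\mathrm{ref}},v^{\mathrm{ref}},t)=\int_{\T_\ell}\int_{\R}\bigl[H^{\mathrm{fan}}(x_2/t)-H^{\mathrm{ref}}(x_2/t)\bigr]\,dx_2\,dx_1 .
\]

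\textbf{Step 2: change variables.} The integrand does not depend on $x_1$, so the $x_1$-integral contributes the factor $\ell$. In the $x_2$-integral we substitute $x_2=t\xi$, $dx_2=t\,d\xi$, for fixed $t>0$. This gives $\ell t\int_\R[H^{\mathrm{fan}}(\xi)-H^{\mathrm{ref}}(\xi)]\,d\xi=\ell t\,\Ecoef$.

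\textbf{Step 3: finiteness of $\Ecoef$.} By the standing assumption of common Riemann data, $H^{\mathrm{fan}}$ and $H^{\mathrm{ref}}$ coincide for $|\xi|$ large. Both profiles are bounded and piecewise constant with finitely many pieces, so the integrand is compactly supported and bounded. Hence $\Ecoef$ is a finite sum of products of constants and interval lengths.

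This step also justifies the renormalization itself. For fixed $t$, the two solutions agree outside a bounded $x_2$-range, so \eqref{eq:renormalized-energy-difference} is well defined on the strip $\Omega_\ell$. The only delicate point is that the outer breakpoints on either side need not coincide. One checks that, beyond $\max(\nu_N,\sigma_{M-1})$ and below $\min(\nu_0,\sigma_1)$, both profiles equal $H_\pm$. This is exactly what "same Riemann data" provides.

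\textbf{Step 4: the rate statements.} The difference $\ell t\,\Ecoef$ is linear in $t$, so its right derivative at any $t$, including $t=0^+$, equals $\ell\,\Ecoef$. Since $\ell>0$, this derivative is negative exactly when $\Ecoef<0$ and positive exactly when $\Ecoef>0$. These are the strict preference statements of the entropy rate criterion, and the sign is independent of $\ell$ and $t$. We expect no real obstacle here: the substantive input is Proposition \ref{prop:rigidity-fixed-fan}, and the rest is the bookkeeping of Step 3 for the exterior regions.
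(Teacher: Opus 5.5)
Your proposal is correct and follows essentially the same route as the paper's proof. Both identify the integrands with the piecewise constant profiles $H^{\mathrm{fan}}(x_2/t)$ and $H^{\mathrm{ref}}(x_2/t)$, integrate out $x_1$ to obtain the factor $\ell$, substitute $x_2=t\xi$, and differentiate the resulting linear function $\ell t\,\Ecoef$ in $t$. Your Step 3 check of finiteness and of the exterior regions only makes explicit what the paper takes as a standing assumption.
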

\begin{proof}
By definition of the energy difference,
\begin{align*}
    \Energy(\rho,v,t) - \Energy(\rho^{\mathrm{ref}},v^{\mathrm{ref}},t)
  =  \int_{\T_\ell} \int_{\R}\left[   H^{\mathrm{fan}}\left(      \frac{x_2}{t}\right)    -  H^{\mathrm{ref}}    \left( \frac{x_2}{t}    \right) \right] dx_2\,dx_1.
\end{align*}
Integration in $x_1$ contributes a factor $\ell$. For fixed
$t>0$, the change of variables $ x_2=t\xi$ leads to
$$\Energy(\rho,v,t) - \Energy(\rho^{\mathrm{ref}},v^{\mathrm{ref}},t)
 =  \ell t  \int_{\R}  \left[   H^{\mathrm{fan}}(\xi)-   H^{\mathrm{ref}}(\xi) \right]d\xi.$$
On allowing $t$ to vary, the required rates are obtained by time differentiation.
\end{proof}
An analogous computation for the action shows, using \eqref{eq:renormalized-action-difference}:
\begin{theorem}
\label{thm:finite-dimensional-action}
Let $ (\rho,v)$
be any convex integration realization generated by the fan subsolution,
and let $(\rho^{\mathrm{ref}},v^{\mathrm{ref}})$ be a piecewise constant self-similar reference solution with the same
Riemann data. Then
\begin{equation}
  \A(\rho^{\mathrm{ref}},v^{\mathrm{ref}},t)  -  \A(\rho,v,t) = \frac{\ell t^2}{2}\Dcoef,  \quad \text{where}\ \ \Dcoef :=  \int_{\R}  \left[    \Lambda^{\mathrm{ref}}(\xi) -    \Lambda^{\mathrm{fan}}(\xi)  \right]d\xi.   \label{eq:action-difference-general}
\end{equation}
In particular, the fan has strictly smaller action than the reference solution if and only if $\Dcoef>0$, and a strictly larger action if and only if $\Dcoef<0$, independently of $\ell>0$ and $t>0$.
\end{theorem}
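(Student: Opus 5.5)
The proof follows the same pattern as the proof of Theorem \ref{thm:finite-dimensional-energy}, with one extra time integration. By Proposition \ref{prop:rigidity-fixed-fan}, the Lagrangian density of any convex integration realization is almost everywhere equal to the piecewise constant profile $\Lambda^{\mathrm{fan}}(x_2/t)$ from \eqref{eq:Hfan-Lambdafan}. The reference solution has Lagrangian density $\Lambda^{\mathrm{ref}}(x_2/t)$. Inserting both into \eqref{eq:renormalized-action-difference} on the strip $\Omega_\ell$, with the order of the two solutions reversed, gives
\begin{equation*}
\A(\rho^{\mathrm{ref}},v^{\mathrm{ref}},t)-\A(\rho,v,t)=\int_0^t\int_{\T_\ell}\int_{\R}\Bigl[\Lambda^{\mathrm{ref}}\Bigl(\frac{x_2}{s}\Bigr)-\Lambda^{\mathrm{fan}}\Bigl(\frac{x_2}{s}\Bigr)\Bigr]dx_2\,dx_1\,ds .
\end{equation*}

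First I check that the integrand is integrable. The two profiles coincide for $|\xi|$ large, because the solutions share the same Riemann data. Both profiles are piecewise constant with finitely many jumps, so $\Lambda^{\mathrm{ref}}-\Lambda^{\mathrm{fan}}$ is bounded and supported in a compact $\xi$-interval. The $\xi$-integral defining $\Dcoef$ is therefore a finite sum of differences of Lagrangian values multiplied by interval lengths. For each fixed $s>0$, the $x_2$-integrand is supported in a bounded set of size $O(s)$, so the whole spacetime integrand is bounded with bounded support on $\Omega_\ell\times(0,t)$. This justifies Fubini and the changes of variables below.

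Next comes the computation. The $x_1$-integration gives the factor $\ell$. For fixed $s>0$, the substitution $x_2=s\xi$ turns the inner integral into $s\,\Dcoef$. The remaining integral is then
\begin{equation*}
\ell\,\Dcoef\int_0^t s\,ds=\frac{\ell t^2}{2}\,\Dcoef,
\end{equation*}
which is \eqref{eq:action-difference-general}.

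The sign statements follow directly, since $\ell t^2/2>0$ for all $\ell,t>0$. If $\Dcoef>0$, the fan's action is strictly smaller than the reference action for every $t>0$, so the fan is strictly preferred under LAAP${}_0$ with any $t_1$. If $\Dcoef<0$, the inequality is reversed. Neither conclusion depends on $\ell$ or $t$.

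There is no real obstacle here. The only point needing care is the integrability and Fubini justification near $s=0$, where the self-similar regions degenerate. This is handled by the uniform bound on the integrand together with its support in $|x_2|\le Rs$ for a fixed $R$.
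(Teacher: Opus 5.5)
Your proof is correct and follows the paper's route: the paper gives this only as the analogue of the energy computation (rigidity of the Lagrangian profile via Proposition \ref{prop:rigidity-fixed-fan}, the substitution $x_2=s\xi$ at each time, and the extra time integration $\int_0^t s\,ds=t^2/2$). Your integrability and Fubini check near $s=0$ is a harmless refinement that the paper leaves implicit.
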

It remains to compute $\Ecoef$ and $\Dcoef$ from the data of the fan and the reference solution.
\begin{proposition} 
\label{prop:discrete-overlap-formulas}
Assume that the fan and reference profiles agree on $P_-$ and on $P_+$. Then 
\begin{equation*}
    \Ecoef =  \sum_{i=1}^{N}  \sum_{j=1}^{M} \omega_{ij}  \left(    H_i-H_j^{\mathrm{ref}} \right), \qquad \Dcoef  = \sum_{i=1}^{N}  \sum_{j=1}^{M} \omega_{ij}  \left(  \Lambda_j^{\mathrm{ref}}-\Lambda_i
  \right),
 \end{equation*}
with $\omega_{ij}  := \left[\min\{\nu_i,\sigma_j\}-   \max\{\nu_{i-1},\sigma_{j-1}\}  \right]_+$, where $[z]_+:=\max\{z,0\}$.
\end{proposition}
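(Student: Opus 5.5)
The plan is to start from the integral definitions of $\Ecoef$ and $\Dcoef$ in Theorems \ref{thm:finite-dimensional-energy} and \ref{thm:finite-dimensional-action}, and evaluate them by decomposing the $\xi$-axis into the common refinement of the two partitions. By the standing assumption, the fan and reference profiles coincide on $P_-\cup P_+$, i.e.\ for $\xi<\nu_0$ and $\xi>\nu_N$. The integrands $H^{\mathrm{fan}}-H^{\mathrm{ref}}$ and $\Lambda^{\mathrm{ref}}-\Lambda^{\mathrm{fan}}$ therefore vanish there. Hence
\[
\Ecoef=\int_{\nu_0}^{\nu_N}\bigl[H^{\mathrm{fan}}(\xi)-H^{\mathrm{ref}}(\xi)\bigr]d\xi
=\sum_{i=1}^N\int_{\nu_{i-1}}^{\nu_i}\bigl[H_i-H^{\mathrm{ref}}(\xi)\bigr]d\xi ,
\]
which is a finite integral, so the splitting is legitimate. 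The interface points form a null set and play no role.

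Next, on each fan interval $(\nu_{i-1},\nu_i)$, the reference profile is piecewise constant with respect to $-\infty=\sigma_0<\cdots<\sigma_M=\infty$. Since these intervals cover $\R$ up to finitely many points, we can write
\[
\int_{\nu_{i-1}}^{\nu_i}\bigl[H_i-H^{\mathrm{ref}}(\xi)\bigr]d\xi=\sum_{j=1}^M\int_{(\nu_{i-1},\nu_i)\cap(\sigma_{j-1},\sigma_j)}\bigl(H_i-H^{\mathrm{ref}}_j\bigr)d\xi .
\]
The integrand on each piece is constant, so it remains to identify the Lebesgue measure of the intersection of two open intervals. That measure is $[\min\{\nu_i,\sigma_j\}-\max\{\nu_{i-1},\sigma_{j-1}\}]_+=\omega_{ij}$; the positive part accounts for empty intersections. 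For $j=1$ or $j=M$, where $\sigma_0=-\infty$ or $\sigma_M=\infty$, the min/max still returns a finite fan endpoint, so $\omega_{ij}$ is finite. Summing over $i$ gives the formula for $\Ecoef$.

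The identical argument with $\Lambda$ in place of $H$, and with the sign convention $\Lambda^{\mathrm{ref}}-\Lambda^{\mathrm{fan}}$, gives the formula for $\Dcoef$.

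There is no real obstacle. The only point needing care is the justification for discarding $\xi\notin[\nu_0,\nu_N]$: it must be the agreement on $P_\pm$ that is invoked, rather than the weaker assumption of agreement for large $|\xi|$. Otherwise reference cells lying outside the fan would contribute terms not captured by the sum over $i$.
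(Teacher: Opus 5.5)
Your proposal is correct and follows essentially the same route as the paper, which likewise identifies $\omega_{ij}$ as the length of $(\nu_{i-1},\nu_i)\cap(\sigma_{j-1},\sigma_j)$, notes that both profiles are constant there, and sums the overlap contributions. Your version also makes explicit a step the paper leaves implicit: the hypothesis that the profiles agree on $P_\pm$ is what allows you to discard $\xi\notin(\nu_0,\nu_N)$.
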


\begin{proof} First note  that $\omega_{ij}$ is the length of the interval $(\nu_{i-1},\nu_i) \cap  (\sigma_{j-1},\sigma_j)$.  
On every nonempty such interval the fan and reference densities are constant. The contribution of this interval to $\Ecoef$ is therefore $\omega_{ij} \left(    H_i-H_j^{\mathrm{ref}} \right)$. Similarly, the contribution of the same interval to $\Dcoef$
is $\omega_{ij}\left(    \Lambda_j^{\mathrm{ref}}-\Lambda_i \right)$.
Summation over all overlaps proves both identities.
\end{proof}

We now apply these results to the classical contact discontinuity. The classical contact solution is defined in terms of 
\begin{equation*}
  \rho_-=\rho_+=:\rho_0,
  \qquad
  v_{-2}=v_{+2}=:s,
\end{equation*}
and $ v_{-1}\neq v_{+1}$ as 
\begin{equation}
  (\rho^{\mathrm c},v^{\mathrm c})(x,t)  =
  \begin{cases}
    (\rho_0,v_-),      &x_2<st,\\[1mm]
    (\rho_0,v_+),      &x_2>st.
  \end{cases}
  \label{eq:classical-contact-solution}
\end{equation}
Define the corresponding mechanical energy and Lagrangian densities by
\begin{align*}
  H_-^{\mathrm c} &:=   \frac12\rho_0|v_-|^2+  \rho_0\eint(\rho_0),
  \qquad   H_+^{\mathrm c}:=  \frac12\rho_0|v_+|^2 +  \rho_0\eint(\rho_0),\\
  \Lambda_-^{\mathrm c}  &:=  \frac12\rho_0|v_-|^2 -  \rho_0\eint(\rho_0), \qquad  \Lambda_+^{\mathrm c}
  :=  \frac12\rho_0|v_+|^2-  \rho_0\eint(\rho_0).
\end{align*}
\begin{proposition}
\label{prop:fan-contact-coefficients}
Let $ (\rho,v)$
be any convex integration realization generated by the fan subsolution, and let $(\rho^{\mathrm{ref}},v^{\mathrm{ref}}) = (\rho^{\mathrm c},v^{\mathrm c})$ be the classical contact solution. Suppose that $\nu_{k-1}<s<\nu_k$ for some $k\in\{1,\ldots,N\}$. Then 
\begin{align*}
  \Ecoef_{\mathrm c}  &=  \sum_{i=1}^{k-1}  (\nu_i-\nu_{i-1}) \left(    H_i-H_-^{\mathrm c}  \right)+ (s-\nu_{k-1}) \left(   H_k-H_-^{\mathrm c} \right)\\ & \qquad  +  (\nu_k-s)
  \left( H_k-H_+^{\mathrm c}  \right)+  \sum_{i=k+1}^{N}  (\nu_i-\nu_{i-1}) \left(   H_i-H_+^{\mathrm c}  \right),\\
  \Dcoef_{\mathrm c}  
&=  \sum_{i=1}^{k-1} (\nu_i-\nu_{i-1})  \left(    \Lambda_-^{\mathrm c}-\Lambda_i \right)+
  (s-\nu_{k-1})  \left(   \Lambda_-^{\mathrm c}-\Lambda_k  \right)\\ 
& \qquad + (\nu_k-s)  \left(   \Lambda_+^{\mathrm c}-\Lambda_k\right)
  +\sum_{i=k+1}^{N}  (\nu_i-\nu_{i-1})\left(    \Lambda_+^{\mathrm c}-\Lambda_i  \right).
\end{align*}
\end{proposition}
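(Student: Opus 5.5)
This is a direct specialization of Proposition \ref{prop:discrete-overlap-formulas} to the classical contact reference. I will read off the reference partition, compute the overlap weights $\omega_{ij}$, and substitute them.

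The contact solution \eqref{eq:classical-contact-solution} is piecewise constant and self-similar. Its partition has $M=2$, with $\sigma_0=-\infty$, $\sigma_1=s$, $\sigma_2=\infty$, and states $(\rho_1^{\mathrm{ref}},v_1^{\mathrm{ref}})=(\rho_0,v_-)$ and $(\rho_2^{\mathrm{ref}},v_2^{\mathrm{ref}})=(\rho_0,v_+)$. Therefore $H_1^{\mathrm{ref}}=H_-^{\mathrm c}$, $H_2^{\mathrm{ref}}=H_+^{\mathrm c}$, and similarly for $\Lambda$. Since $\rho_\pm=\rho_0$, the fan's exterior states $(\rho_\pm,v_\pm)$ coincide with these reference states.

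To check the hypothesis of Proposition \ref{prop:discrete-overlap-formulas}, note that $\nu_0<s<\nu_N$. On $P_-$, i.e.\ $\xi<\nu_0<s$, the reference equals $(\rho_0,v_-)$, which is the fan state there. On $P_+$, i.e.\ $\xi>\nu_N>s$, it equals $(\rho_0,v_+)$. So the profiles agree on $P_\pm$.

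Next I compute $\omega_{ij}$, the length of $(\nu_{i-1},\nu_i)\cap(\sigma_{j-1},\sigma_j)$. Using $\nu_{k-1}<s<\nu_k$ and the monotonicity of the $\nu_i$:
\begin{itemize}
\item For $i\le k-1$ we have $\nu_i\le\nu_{k-1}<s$, so $\omega_{i1}=\nu_i-\nu_{i-1}$ and $\omega_{i2}=0$.
\item For $i=k$, $\omega_{k1}=s-\nu_{k-1}$ and $\omega_{k2}=\nu_k-s$.
\item For $i\ge k+1$ we have $\nu_{i-1}\ge\nu_k>s$, so $\omega_{i1}=0$ and $\omega_{i2}=\nu_i-\nu_{i-1}$.
\end{itemize}

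Substituting these weights into the double sums of Proposition \ref{prop:discrete-overlap-formulas} gives the stated formulas for $\Ecoef_{\mathrm c}$ and $\Dcoef_{\mathrm c}$ term by term. There is no real obstacle here. The only point needing care is the exterior agreement, which relies on $\rho_\pm=\rho_0$ and on $s$ lying strictly inside $(\nu_0,\nu_N)$; the latter is guaranteed by the assumption on $k$.
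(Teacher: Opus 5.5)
Your proposal is correct and matches the paper's proof: both specialize Proposition~\ref{prop:discrete-overlap-formulas} to the two-state reference partition split at $s$, with the $k$-th fan interval divided there. Your explicit weights $\omega_{ij}$ and your check of the exterior agreement, using $\rho_\pm=\rho_0$ and $\nu_0\le\nu_{k-1}<s<\nu_k\le\nu_N$, spell out details the paper leaves implicit.
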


\begin{proof}
This follows directly from Proposition \ref{prop:discrete-overlap-formulas}. All fan intervals left of $s$ are compared with the left reference state, all
intervals right of $s$ with the right reference state, and the $k$-th fan interval is divided at $s$.
\end{proof}

\section{Comparison and stability of admissibility criteria} \label{sec:comparisonstability}

In this section we discuss the contributions from the kinetic and the
internal energy to  $\Ecoef$ and $\Dcoef$. The results are used to clarify the relationship between the action and entropy rate admissibility criteria. We then discuss their stability under parameter perturbations.

As in Section \ref{sec:finite-dimensional-formulas}, we let $ (\rho,v)$ be any convex integration realization generated by a given fan subsolution and $(\rho^{\mathrm{ref}},v^{\mathrm{ref}})$ a piecewise constant self-similar reference solution with the same Riemann data. Setting $ C_j^{\mathrm{ref}}:= |v_j^{\mathrm{ref}}|^2$ for the reference solution and the kinetic level $C_i$ for the fan solution, we define kinetic and internal energy mismatches as
\begin{equation}
  \Kcoef  :=  \sum_{i=1}^{N} \sum_{j=1}^{M}  \omega_{ij}  \left(    \rho_j^{\mathrm{ref}}C_j^{\mathrm{ref}}   -    \rho_iC_i  \right), \quad   \Icoef
  :=  \sum_{i=1}^{N}  \sum_{j=1}^{M}  \omega_{ij}  \left[    \rho_i\eint(\rho_i)   -    \rho_j^{\mathrm{ref}}
    \eint(\rho_j^{\mathrm{ref}})  \right].
  \label{eq:mismatches}
\end{equation}
A positive value of $\Kcoef$ means that, after weighting by the fan geometry, the reference solution has larger kinetic energy density than the fan.\\

Straightforward algebra, using the identities \eqref{eq:Href-Lambdaref}, \eqref{eq:Hfan-Lambdafan} (for each $(i,j)$) 
\begin{align*}
  \Lambda_j^{\mathrm{ref}}-\Lambda_i
  & =  \frac12  \left(    \rho_j^{\mathrm{ref}}C_j^{\mathrm{ref}}    -    \rho_iC_i \right)+  \left[    \rho_i\eint(\rho_i)  -    \rho_j^{\mathrm{ref}}    \eint(\rho_j^{\mathrm{ref}})  \right],\\
  H_i-H_j^{\mathrm{ref}}
  &=  -\frac12 \left(    \rho_j^{\mathrm{ref}}C_j^{\mathrm{ref}}-   \rho_iC_i \right)+ \left[    \rho_i\eint(\rho_i)   -    \rho_j^{\mathrm{ref}}  \eint(\rho_j^{\mathrm{ref}}) \right],
\end{align*}
and Proposition \ref{prop:discrete-overlap-formulas} shows an exact relation between the action and entropy rate criteria, in terms of kinetic and internal energy mismatches:
\begin{theorem} \label{thm:kinetic-internal-decomposition}
The action and entropy rate coefficients satisfy
\begin{equation}
 \Ecoef=-\frac12\Kcoef+\Icoef,\qquad   \Dcoef  =  \frac12\Kcoef+\Icoef.
  \label{eq:main-KI-decomposition}
\end{equation}
\end{theorem}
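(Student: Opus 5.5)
The plan is to work termwise in the overlap sums of Proposition \ref{prop:discrete-overlap-formulas}. Both $\Ecoef$ and $\Dcoef$ are finite sums over pairs $(i,j)$ weighted by the same nonnegative lengths $\omega_{ij}$. The mismatches $\Kcoef$ and $\Icoef$ in \eqref{eq:mismatches} are sums over the same index set with the same weights. It therefore suffices to prove, for each fixed pair $(i,j)$, the two pointwise identities displayed before the theorem, then multiply by $\omega_{ij}$ and sum. Linearity of the finite double sum then gives \eqref{eq:main-KI-decomposition}.

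For the pointwise identities, I would substitute the definitions. From \eqref{eq:fanprof1} and the convex integration property $|v|^2=C_i$ a.e.\ in $P_i$, we have $\Lambda_i=\tfrac12\rho_iC_i-\rho_i\eint(\rho_i)$ and $H_i=\tfrac12\rho_iC_i+\rho_i\eint(\rho_i)$. From \eqref{eq:Href-Lambdaref} with $C_j^{\mathrm{ref}}=|v_j^{\mathrm{ref}}|^2$, we have $\Lambda_j^{\mathrm{ref}}=\tfrac12\rho_j^{\mathrm{ref}}C_j^{\mathrm{ref}}-\rho_j^{\mathrm{ref}}\eint(\rho_j^{\mathrm{ref}})$, and likewise for $H_j^{\mathrm{ref}}$. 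Subtracting and grouping the kinetic terms and the internal-energy terms separately gives
\begin{align*}
\Lambda_j^{\mathrm{ref}}-\Lambda_i&=\tfrac12(\rho_j^{\mathrm{ref}}C_j^{\mathrm{ref}}-\rho_iC_i)+[\rho_i\eint(\rho_i)-\rho_j^{\mathrm{ref}}\eint(\rho_j^{\mathrm{ref}})],\\
H_i-H_j^{\mathrm{ref}}&=-\tfrac12(\rho_j^{\mathrm{ref}}C_j^{\mathrm{ref}}-\rho_iC_i)+[\rho_i\eint(\rho_i)-\rho_j^{\mathrm{ref}}\eint(\rho_j^{\mathrm{ref}})].
\end{align*}
Summing these against $\omega_{ij}$ yields $\Dcoef=\tfrac12\Kcoef+\Icoef$ and $\Ecoef=-\tfrac12\Kcoef+\Icoef$.

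The only point needing care is the hypothesis of Proposition \ref{prop:discrete-overlap-formulas}: the exterior regions $P_\pm$ must contribute nothing. This holds because both solutions take the same Riemann data there, so the integrands in $\Ecoef$ and $\Dcoef$ vanish for large $|\xi|$. The sums over $i=1,\dots,N$ therefore capture everything, and $\Kcoef$ and $\Icoef$ are defined over exactly the same index range.

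There is one subtlety. If the reference partition extends beyond $[\nu_0,\nu_N]$ with states differing from the Riemann data, those overlaps with $P_\pm$ are not included in the sums. I would therefore state explicitly that the theorem inherits the assumption of Proposition \ref{prop:discrete-overlap-formulas}. Beyond this bookkeeping I do not expect any real obstacle, since the argument is purely algebraic. As a final check, adding the two identities gives $\Ecoef+\Dcoef=2\Icoef$, which confirms the signs.
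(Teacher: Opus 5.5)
Your proposal is correct and follows the paper's own argument: the two termwise identities for $\Lambda_j^{\mathrm{ref}}-\Lambda_i$ and $H_i-H_j^{\mathrm{ref}}$ are weighted by $\omega_{ij}$ and summed via Proposition~\ref{prop:discrete-overlap-formulas}. Your caveat is well taken: the statement tacitly inherits that proposition's hypothesis that fan and reference agree on all of $P_\pm$, not merely for large $|\xi|$. The paper leaves this implicit, and it could also be removed by letting $i$ range over the exterior fan states with $C_\pm:=|v_\pm|^2$, since the termwise identities hold there as well.
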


The following Corollary confirms the interpretation in Figure \ref{fig:KI-phase-diagram} from the Introduction. 
\begin{corollary}
\label{cor:agreement-condition}
$(i)$ The action and entropy rate criteria both strictly prefer the fan to the reference solution if and
only if  $\Kcoef>0$ and  $|\Icoef|<\frac{\Kcoef}{2}$.\\
$(ii)$ Both criteria strictly prefer the reference solution if and only if $\Kcoef<0$ and  $|\Icoef|<\frac{|\Kcoef|}{2}$.\\
$(iii)$ The action rate criterion strictly prefers the fan while the entropy rate criterion strictly prefers  the reference solution if and only if $\Icoef>\frac{|\Kcoef|}{2}$.\\
 $(iv)$ The action rate criterion strictly prefers the reference solution while the entropy rate criterion strictly prefers  the fan if and only if $\Icoef<-\frac{|\Kcoef|}{2}$.
\end{corollary}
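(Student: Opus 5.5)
The plan is to translate each preference statement into a sign condition on $\Dcoef$ or $\Ecoef$, and then substitute the decomposition of Theorem \ref{thm:kinetic-internal-decomposition}. By Theorem \ref{thm:finite-dimensional-action}, the action rate criterion strictly prefers the fan exactly when $\Dcoef>0$, and strictly prefers the reference exactly when $\Dcoef<0$. By Theorem \ref{thm:finite-dimensional-energy}, the entropy rate criterion strictly prefers the fan exactly when $\Ecoef<0$, and strictly prefers the reference exactly when $\Ecoef>0$. In each case this holds independently of $\ell$ and $t$. Inserting \eqref{eq:main-KI-decomposition}, the four conditions become
\begin{equation*}
\Dcoef>0 \iff \Icoef>-\tfrac12\Kcoef,\qquad \Dcoef<0\iff \Icoef<-\tfrac12\Kcoef,\qquad \Ecoef<0\iff \Icoef<\tfrac12\Kcoef,\qquad \Ecoef>0\iff \Icoef>\tfrac12\Kcoef .
\end{equation*}

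Each of (i)--(iv) is then the conjunction of two of these half-plane conditions. The only work is to rewrite each intersection in the form stated.

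For (i), the conditions are $-\tfrac12\Kcoef<\Icoef<\tfrac12\Kcoef$. These force $-\tfrac12\Kcoef<\tfrac12\Kcoef$, so $\Kcoef>0$, and they then read $|\Icoef|<\tfrac12\Kcoef$. Conversely, $\Kcoef>0$ together with $|\Icoef|<\Kcoef/2$ gives both strict inequalities back.

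For (ii), the conditions are $\tfrac12\Kcoef<\Icoef<-\tfrac12\Kcoef$. These force $\Kcoef<0$, and they then become $|\Icoef|<\tfrac12|\Kcoef|$.

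For (iii), the conditions are $\Icoef>-\tfrac12\Kcoef$ and $\Icoef>\tfrac12\Kcoef$, which together say $\Icoef>\max(\pm\tfrac12\Kcoef)=\tfrac12|\Kcoef|$.

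For (iv), the conditions are $\Icoef<-\tfrac12\Kcoef$ and $\Icoef<\tfrac12\Kcoef$, which together say $\Icoef<\min(\pm\tfrac12\Kcoef)=-\tfrac12|\Kcoef|$.

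There is no real obstacle here. The one point requiring care is the equivalences in (i) and (ii). There the sign of $\Kcoef$ is not assumed; it must be derived from the compatibility of the two strict inequalities. In the converse direction it is needed to recover both strict inequalities from $|\Icoef|<|\Kcoef|/2$. I will also note that the boundary lines $\Icoef=\pm\Kcoef/2$ correspond to $\Dcoef=0$ or $\Ecoef=0$, where one of the criteria gives no strict preference. These lines are therefore correctly excluded from all four cases.
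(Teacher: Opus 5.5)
Your proposal is correct and follows the paper's proof: you use Theorems \ref{thm:finite-dimensional-action} and \ref{thm:finite-dimensional-energy} to turn strict preference into sign conditions on $\Dcoef$ and $\Ecoef$, substitute the decomposition of Theorem \ref{thm:kinetic-internal-decomposition}, and intersect the resulting half-planes. You also write out the cases (ii)--(iv), which the paper only calls analogous, and you show how the sign of $\Kcoef$ in (i) and (ii) follows from the two strict inequalities being compatible.
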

\begin{proof}
To show $(i)$, we use Theorem \ref{thm:kinetic-internal-decomposition} and recall that the entropy rate criterion strictly prefers the fan if and only if $  \Ecoef=-\frac12\Kcoef+\Icoef<0$ (Theorem \ref{thm:finite-dimensional-energy}), while
the action rate criterion strictly prefers the fan if and only if $\Dcoef=\frac12\Kcoef+\Icoef>0$ (Theorem \ref{thm:finite-dimensional-action}). These inequalities for $\Ecoef, \Dcoef$ readily translate into the stated conditions on $\Kcoef, \Icoef$.\\
Parts $(ii)$, $(iii)$ and $(iv)$ are obtained analogously.
\end{proof}

\noindent \textbf{Contact discontinuity.} We now show that admissible fan competitors to a planar contact discontinuity are always preferred by both action and entropy rate criteria, as in Part (i) of Corollary \ref{cor:agreement-condition}. In particular, this applies to the Krupa--Sz\'ekelyhidi and Horimoto solutions studied below.

We study a classical planar contact discontinuity with
\begin{equation}
  \rho_-=\rho_+=\rho_0, \qquad v_{-2}=v_{+2}=s.
  \label{eq:universal-contact-data}
\end{equation}
Let $ (\bar\rho,\bar v,\bar u)$ be an admissible piecewise constant fan subsolution have the same Riemann data and exterior states. The fan regions $P_-,P_1,\ldots,P_N,P_+$ are separated by $ \nu_0<\nu_1<\cdots<\nu_N$. For any piecewise constant fan quantity having values $  f_-,f_1,\ldots,f_N,f_+$, we define its jump at the interface $x_2=\nu_rt$ by
\begin{equation*}
[f]_0:=f_1-f_-,  \qquad  [f]_r:=f_{r+1}-f_r  \quad (1\leq r\leq N-1),  \qquad [f]_N:=f_+-f_N. 
\end{equation*}
Further, define the fluxes
\begin{equation*}  \mathcal J_\pm:=\rho_\pm(v_\pm)_2, \, \  \mathcal J_i:=\rho_i(v_i)_2, \ \  \mathcal F_\pm :=  \bigl(H_\pm+p(\rho_\pm)\bigr)(v_\pm)_2,  \ \   \mathcal F_i  :=  \bigl(H_i+p(\rho_i)\bigr)(v_i)_2,
\end{equation*}
for  $i=1,\ldots,N$. The mass Rankine-Hugoniot relations and the entropy inequalities take the form
\begin{equation}
  [\mathcal J]_r-\nu_r[\rho]_r=0,  \qquad r=0,\ldots,N,
  \label{eq:contact-fan-mass-RH}
\end{equation}
and
\begin{equation}
  [\mathcal F]_r-\nu_r[H]_r\leq0, \qquad r=0,\ldots,N.  \label{eq:contact-fan-entropy-RH}
\end{equation}

\begin{theorem}
\label{thm:contact-agreement}
Assume that $p\in C^1(0,\infty)$ and $p'(\rho)>0$. Let an admissible piecewise constant fan subsolution have the same Riemann data as the classical planar contact discontinuity \eqref{eq:universal-contact-data}. Then the mismatch and rate coefficients defined in Sections \ref{sec:finite-dimensional-formulas} and \ref{sec:comparisonstability} satisfy
\begin{equation*}
  \Icoef\geq0,  \qquad  \Ecoef\leq0.
\end{equation*}
Consequently,
\begin{equation*}
  \Kcoef=2(\Icoef-\Ecoef)\geq2\Icoef\geq0,  \qquad  \Dcoef=2\Icoef-\Ecoef\geq0.
\end{equation*}
Moreover,  $\Icoef>0$ whenever the fan density is not identically equal to  $\rho_0$.  If at least one interface entropy inequality  \eqref{eq:contact-fan-entropy-RH} is strict, then
  \begin{equation*}
    \Ecoef<0,    \qquad  \Kcoef>2\Icoef\geq0,   \qquad   \Dcoef>0.   
  \end{equation*}
\end{theorem}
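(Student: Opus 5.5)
The plan is to reduce both sign statements to the interface conditions \eqref{eq:contact-fan-mass-RH} and \eqref{eq:contact-fan-entropy-RH}, each summed over $r=0,\ldots,N$ by summation by parts. The internal energy part then follows from convexity (Jensen's inequality), and the energy part follows directly from the summed entropy inequality. Throughout I write $\Delta_i:=\nu_i-\nu_{i-1}$. Rather than using Proposition~\ref{prop:fan-contact-coefficients}, which assumes $\nu_{k-1}<s<\nu_k$, I will work with the integral definitions over a large window $[-L,L]$ containing $s$ and all $\nu_r$. Outside this window the fan and reference profiles coincide, so nothing needs to be assumed about where $s$ lies. After the two signs are established, the remaining claims are pure algebra from Theorem~\ref{thm:kinetic-internal-decomposition}: $\Kcoef=2(\Icoef-\Ecoef)$ and $\Dcoef=\frac12\Kcoef+\Icoef=2\Icoef-\Ecoef$.

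\emph{Step 1 (mass identity).} Sum \eqref{eq:contact-fan-mass-RH} over $r$. The flux terms telescope to $\mathcal J_+-\mathcal J_-=\rho_0 s-\rho_0 s=0$. Summation by parts gives
\[
\sum_{r=0}^N\nu_r[\rho]_r=\nu_N\rho_+-\nu_0\rho_--\sum_{i=1}^N\Delta_i\rho_i .
\]
Hence $\sum_i\Delta_i\rho_i=\rho_0(\nu_N-\nu_0)$: the fan densities have weighted mean exactly $\rho_0$ on $[\nu_0,\nu_N]$.

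\emph{Step 2 ($\Icoef\ge0$).} Since the reference density is $\rho_0$ everywhere, the weights $\omega_{ij}$ sum over $j$ to $\Delta_i$. This gives $\Icoef=\sum_i\Delta_i\bigl(f(\rho_i)-f(\rho_0)\bigr)$ with $f(\rho):=\rho\eint(\rho)$. From $p=\rho^2\eint'$ we get $f'=\eint+p/\rho$ and $f''=p'/\rho>0$, so $f$ is strictly convex on $(0,\infty)$. Jensen's inequality with the probability weights $\Delta_i/(\nu_N-\nu_0)$ and the mean from Step 1 gives $\Icoef\ge0$. Equality holds only if all $\rho_i=\rho_0$, which is the claimed strictness whenever the fan density is not identically $\rho_0$.

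\emph{Step 3 ($\Ecoef\le0$).} Sum \eqref{eq:contact-fan-entropy-RH} over $r$ in the same way. Using $\mathcal F_\pm=(H_\pm+p(\rho_0))s$, so that $\mathcal F_+-\mathcal F_-=s(H_+-H_-)$, this yields
\[
\sum_i\Delta_iH_i\le \nu_NH_+-\nu_0H_--s(H_+-H_-).
\]
On $[-L,L]$ we have $\int H^{\mathrm{fan}}=(\nu_0+L)H_-+\sum_i\Delta_iH_i+(L-\nu_N)H_+$ and $\int H^{\mathrm{ref}}=(s+L)H_-+(L-s)H_+$. Subtracting and inserting the inequality gives $\Ecoef\le0$. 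The inequality is strict as soon as one interface inequality is strict. In that case $\Kcoef=2(\Icoef-\Ecoef)>2\Icoef\ge0$ and $\Dcoef=2\Icoef-\Ecoef>0$.

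The only delicate point is the bookkeeping in the summation by parts, in particular matching the boundary terms $\nu_0,\nu_N$ against the reference integral. For that reason I will use the window formulation rather than the overlap formula, which otherwise depends on the position of $s$. The convexity $f''=p'/\rho$ is exactly where the hypothesis $p'>0$ enters; $p\in C^1$ is what makes $f''$ exist, and strict convexity can also be argued from $f'$ being strictly increasing.
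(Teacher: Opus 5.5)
Your proof is correct and follows the paper's argument essentially step for step: you sum the mass Rankine--Hugoniot relations to show the fan density has mean $\rho_0$, apply strict Jensen to the strictly convex pressure potential $\rho\eint(\rho)$ to get $\Icoef\ge0$, sum the interface entropy inequalities using $\mathcal F_+-\mathcal F_-=s(H_+-H_-)$ to get $\Ecoef\le0$, and read off $\Kcoef$ and $\Dcoef$ from Theorem~\ref{thm:kinetic-internal-decomposition}. Your discrete summation by parts on the window $[-L,L]$ is an explicit version of the paper's integration by parts against $\frac{d}{d\xi}\rho^{\mathrm{fan}}=\sum_r[\rho]_r\delta_{\nu_r}$, and, like the paper's argument, it requires no assumption on where $s$ lies.
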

Therefore, every strictly dissipative admissible fan competitor to a planar contact lies strictly in the right hand sector of Figure \ref{fig:KI-phase-diagram}, and both the entropy rate and action rate criteria strictly prefer the fan to the classical contact discontinuity.
\begin{proof}
Summing the mass Rankine--Hugoniot relations \eqref{eq:contact-fan-mass-RH} over the $N+1$ fan interfaces, we obtain
\begin{equation*}  \sum_{r=0}^{N}\nu_r[\rho]_r =  \sum_{r=0}^{N}[\mathcal J]_r =  \mathcal J_+-\mathcal J_-. 
\end{equation*}
For the contact data,  $\mathcal J_-=\mathcal J_+=\rho_0s$, and hence
\begin{equation}
  \sum_{r=0}^{N}\nu_r[\rho]_r=0.  \label{eq:contact-fan-summed-mass}
\end{equation}

Let $\rho^{\mathrm{fan}}(\xi)$ denote the fan density profile in the similarity variable $\xi=\frac{x_2}{t}$. It equals $\rho_0$ outside the interval $(\nu_0,\nu_N)$. Since its distributional derivative is
\[  \frac{d}{d\xi}\rho^{\mathrm{fan}} =  \sum_{r=0}^{N}[\rho]_r\delta_{\nu_r}, \]
integration by parts gives
\begin{align*}
\int_{\mathbb R}   \bigl(    \rho^{\mathrm{fan}}(\xi)-\rho_0  \bigr)\,d\xi  &=  -\sum_{r=0}^{N}\nu_r[\rho]_r=0.
\end{align*}
Choose $R>0$ sufficiently large that $  (\nu_0,\nu_N)\subset(-R,R)$. Then
\begin{equation} \label{eq:rho-fan}
  \frac1{2R}  \int_{-R}^{R}  \rho^{\mathrm{fan}}(\xi)\,d\xi  =  \rho_0.
\end{equation}
We recall the pressure potential $P(\rho):=\rho e(\rho)$. Using $p(\rho)=\rho^2e'(\rho)$, one obtains $P''(\rho)=\frac{p'(\rho)}{\rho}$. Hence, $P$ is strictly convex, since $p'(\rho)>0$. Together with \eqref{eq:rho-fan} and Jensen's inequality we obtain
\[  \frac1{2R}  \int_{-R}^{R}  P\bigl(\rho^{\mathrm{fan}}(\xi)\bigr)\,d\xi  \geq  P(\rho_0).\]
Since the reference contact has the constant density $\rho_0$,
$\Icoef  =  \int_{\mathbb R}  \left[    P\bigl(\rho^{\mathrm{fan}}(\xi)\bigr) -    P(\rho_0) \right]d\xi  \geq0. $
Strict Jensen convexity now gives $\Icoef>0$ whenever $\rho^{\mathrm{fan}}\not\equiv\rho_0$.

We next compute the relative energy-rate coefficient. Let $H^{\mathrm{fan}}$ and $H^{\mathrm c}$ denote the fan and contact
energy profiles. Since their difference has compact support, integration by parts leads to
\begin{equation}  \Ecoef= s(H_+-H_-)  -  \sum_{r=0}^{N}\nu_r[H]_r.  \label{eq:contact-E-jump-formula}
\end{equation}

The exterior density, pressure, and normal velocity agree on both sides of the contact. Hence
$\mathcal F_+-\mathcal F_-=  s(H_+-H_-)$. Using this identity in \eqref{eq:contact-E-jump-formula}, we obtain
\begin{align*}
  \Ecoef   =  \mathcal F_+-\mathcal F_- -  \sum_{r=0}^{N}\nu_r[H]_r=  \sum_{r=0}^{N}  \left(    [\mathcal F]_r-\nu_r[H]_r \right)  \leq0
\end{align*}
by the interface entropy inequalities \eqref{eq:contact-fan-entropy-RH}. If at least one of these inequalities is strict, then $\Ecoef<0$.

Finally, Theorem \ref{thm:kinetic-internal-decomposition} gives
$\Kcoef=2(\Icoef-\Ecoef)$,
$\Dcoef=2\Icoef-\Ecoef$.
The conclusions now follow from $\Icoef\geq0$ and $\Ecoef\leq0$.
\end{proof}

\begin{corollary}
\label{cor:no-contact-disagreement}
An admissible piecewise constant fan competitor to a classical planar contact cannot lie in either disagreement sector of the $(\Kcoef,\Icoef)$-plane. Its coordinates belong to the  cone
\begin{equation*}
  \Icoef\geq0,  \qquad   \Kcoef\geq2\Icoef.  
\end{equation*}
If at least one interface entropy inequality is strict, then  $\Kcoef>2\Icoef\geq0$.
\end{corollary}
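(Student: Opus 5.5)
This corollary is a direct consequence of Theorem \ref{thm:contact-agreement} together with the sector description in Corollary \ref{cor:agreement-condition}. The plan is to first record the cone membership, then exclude the disagreement sectors, and finally treat the strict case.

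\textbf{Cone membership.} Theorem \ref{thm:contact-agreement} gives $\Icoef\geq0$ and $\Ecoef\leq0$ for any admissible fan competitor with the contact Riemann data. By the decomposition of Theorem \ref{thm:kinetic-internal-decomposition}, $\Kcoef=2(\Icoef-\Ecoef)$, so $\Kcoef-2\Icoef=-2\Ecoef\geq0$. Hence $(\Kcoef,\Icoef)$ lies in the cone $\Icoef\geq0$, $\Kcoef\geq2\Icoef$. In particular $\Kcoef\geq0$, so $|\Kcoef|=\Kcoef$.

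\textbf{Exclusion of the disagreement sectors.} By Corollary \ref{cor:agreement-condition}(iii)--(iv), the two disagreement sectors are $\Icoef>\frac{|\Kcoef|}{2}$ and $\Icoef<-\frac{|\Kcoef|}{2}$.
\begin{itemize}
\item The first would give $\Icoef>\Kcoef/2$, which contradicts $\Kcoef\geq2\Icoef$.
\item The second would give $\Icoef<0$, which contradicts $\Icoef\geq0$.
\end{itemize}
So neither sector can contain the point. Equivalently, in terms of the rate coefficients, the disagreement sectors are $\{\Dcoef>0,\Ecoef>0\}$ and $\{\Dcoef<0,\Ecoef<0\}$. Both are excluded because $\Ecoef\leq0$ and $\Dcoef=2\Icoef-\Ecoef\geq0$.

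\textbf{Strict case.} If some interface entropy inequality \eqref{eq:contact-fan-entropy-RH} is strict, Theorem \ref{thm:contact-agreement} gives $\Ecoef<0$. Then $\Kcoef-2\Icoef=-2\Ecoef>0$, and together with $\Icoef\geq0$ this yields $\Kcoef>2\Icoef\geq0$.

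There is no real obstacle here. The only care needed is on the boundary rays $\Icoef=\pm\Kcoef/2$, where one of $\Dcoef,\Ecoef$ vanishes. These are not strict disagreement (the sectors in Corollary \ref{cor:agreement-condition} are open), so the non-strict cone statement is consistent.
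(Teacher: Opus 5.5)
Your proposal is correct and follows the paper's route. The paper gives no separate proof: the cone $\Icoef\ge0$, $\Kcoef=2(\Icoef-\Ecoef)\ge2\Icoef$ and the strict version are already stated in Theorem~\ref{thm:contact-agreement}. Your exclusion of the open disagreement sectors (iii) and (iv) of Corollary~\ref{cor:agreement-condition}, together with your remark on the boundary rays, is exactly the intended reading.
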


%The proof uses only the mass Rankine--Hugoniot relations, strict convexity of the pressure potential, and the interface entropy inequalities. 
The proof also applies to relaxed fans  when the normal entropy flux is an independent variable, as in Horimoto's formulation, provided that $\mathcal F_i$ is replaced by the corresponding prescribed normal relaxed entropy flux.\\

\noindent \textbf{The Chiodaroli--Kreml two-shock family.} The disagreement parts of Corollary \ref{cor:agreement-condition} are not empty. Part $(iv)$ is realized by the two-shock solutions of Chiodaroli and Kreml \cite{ChiodaroliKreml}, through results proved separately in \cite{ChiodaroliKreml} and \cite{GKKS1}. Example \ref{exampleKS} below specifically describes the curve CK of these solutions in Figure \ref{fig:KI-phase-diagram}, which crosses parts $(ii)$ and $(iv)$.

Consider Riemann data with $p(\rho)=\rho^\gamma$, no tangential velocity component, and a self-similar solution consisting of two
admissible shocks with intermediate state $(\rho_m,v_m)$ and speeds $\nu_-(\rho_m)<\nu_+(\rho_m)$. The fan subsolutions of
\cite{ChiodaroliKreml} have a single turbulent region with density $\rho_1$, kinetic level
\begin{equation*}
C(\rho_1)=\beta^2(\rho_1)+\epsilon_1(\rho_1)+\epsilon_2,  \qquad  \beta^2(\rho_m)=v_m^2,\quad \epsilon_1(\rho_m)=0,\quad \epsilon_2>0,
\end{equation*}
and interface speeds $\nu_-(\rho_1)<\nu_+(\rho_1)$ given by the same functions as the classical shock speeds, evaluated at $\rho_1$ instead
of $\rho_m$. We refer to \cite[Section~4]{GKKS1} for this summary and \cite[(4.46)--(4.54), (4.82), (4.83)]{ChiodaroliKreml} for the
explicit formulas and the admissible parameter ranges. The reference partition and the fan partition thus differ only in the position of the two interfaces, and the coefficients $\Dcoef$, $\Ecoef$, $\Kcoef$, $\Icoef$ of the comparison are given by the overlap formulas of
Proposition~\ref{prop:discrete-overlap-formulas}. When the fan wedge is contained in the reference wedge, $\nu_-(\rho_m)\le\nu_-(\rho_1)<\nu_+(\rho_1)\le\nu_+(\rho_m)$, they read
\begin{align}
  \Kcoef(\rho_1)   =&\bigl(\nu_-(\rho_1)-\nu_-(\rho_m)\bigr) \bigl(\rho_m v_m^2-\rho_- v_{b-}^2\bigr)
     +\bigl(\nu_+(\rho_1)-\nu_-(\rho_1)\bigr)     \bigl(\rho_m v_m^2-\rho_1 C(\rho_1)\bigr)
  \nonumber\\
  &+\bigl(\nu_+(\rho_m)-\nu_+(\rho_1)\bigr) \bigl(\rho_m v_m^2-\rho_+ v_{b+}^2\bigr),
  \label{eq:CK-K}\\[2pt]   \Icoef(\rho_1)
  ={}&\bigl(\nu_-(\rho_1)-\nu_-(\rho_m)\bigr)      \bigl(\rho_-\eint(\rho_-)-\rho_m\eint(\rho_m)\bigr)
     +\bigl(\nu_+(\rho_1)-\nu_-(\rho_1)\bigr)      \bigl(\rho_1\eint(\rho_1)-\rho_m\eint(\rho_m)\bigr)
  \nonumber\\
  &+\bigl(\nu_+(\rho_m)-\nu_+(\rho_1)\bigr)   \bigl(\rho_+\eint(\rho_+)-\rho_m\eint(\rho_m)\bigr),
  \label{eq:CK-I}
\end{align}
with the general ordering handled by the overlap weights of Proposition~\ref{prop:discrete-overlap-formulas}. At the endpoint $\rho_1=\rho_m$ the two partitions coincide and the comparison is exactly computable.

\begin{proposition}
\label{prop:CK-endpoint}
At $\rho_1=\rho_m$, both criteria prefer the classical two-shock solution:
\begin{align*} 
 \Kcoef(\rho_m)&  =\Bigl(-\bigl(\nu_+(\rho_m)-\nu_-(\rho_m)\bigr)\rho_m\epsilon_2<0,\qquad \Icoef(\rho_m) = 0,\\
\Dcoef(\rho_m)   &=-\tfrac12\bigl(\nu_+(\rho_m)-\nu_-(\rho_m)\bigr)\rho_m\epsilon_2<0, \qquad
\Ecoef(\rho_m)   =\tfrac12\bigl(\nu_+(\rho_m)-\nu_-(\rho_m)\bigr)\rho_m\epsilon_2>0.
\end{align*}
\end{proposition}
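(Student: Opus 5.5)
Evaluating the wedge formulas \eqref{eq:CK-K}, \eqref{eq:CK-I} at the endpoint $\rho_1=\rho_m$ is the plan. Since the interface speeds $\nu_\pm(\rho_1)$ are the classical shock speed functions evaluated at $\rho_1$, setting $\rho_1=\rho_m$ gives $\nu_-(\rho_1)=\nu_-(\rho_m)$ and $\nu_+(\rho_1)=\nu_+(\rho_m)$. The fan wedge and the reference wedge then coincide, the containment hypothesis holds trivially, and the first and third summands in both \eqref{eq:CK-K} and \eqref{eq:CK-I} carry the vanishing weights $\nu_\pm(\rho_1)-\nu_\pm(\rho_m)=0$. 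Equivalently, in Proposition~\ref{prop:discrete-overlap-formulas} the overlap matrix $\omega_{ij}$ reduces to the single diagonal weight $\nu_+(\rho_m)-\nu_-(\rho_m)$ on the middle region, because the exterior states agree by assumption.

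Only the middle term survives, and I will evaluate it next. For $\Icoef$ it is $(\nu_+(\rho_m)-\nu_-(\rho_m))(\rho_m\eint(\rho_m)-\rho_m\eint(\rho_m))=0$. For $\Kcoef$ I will substitute $C(\rho_m)=\beta^2(\rho_m)+\epsilon_1(\rho_m)+\epsilon_2=v_m^2+\epsilon_2$. Here $v_m^2$ means $|v_m|^2$, since there is no tangential velocity. This gives $\rho_m v_m^2-\rho_m C(\rho_m)=-\rho_m\epsilon_2$, hence
\[
\Kcoef(\rho_m)=-\bigl(\nu_+(\rho_m)-\nu_-(\rho_m)\bigr)\rho_m\epsilon_2 .
\]

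Theorem~\ref{thm:kinetic-internal-decomposition} then gives $\Dcoef=\tfrac12\Kcoef+\Icoef=\tfrac12\Kcoef$ and $\Ecoef=-\tfrac12\Kcoef+\Icoef=-\tfrac12\Kcoef$, which are the stated formulas. For the signs, I use that $\nu_-(\rho_m)<\nu_+(\rho_m)$ by assumption, $\rho_m>0$, and $\epsilon_2>0$. Hence $\Kcoef<0$, $\Dcoef<0$ and $\Ecoef>0$. Since $\Icoef=0$, we have $|\Icoef|<|\Kcoef|/2$, so Corollary~\ref{cor:agreement-condition}(ii) applies, and Theorems~\ref{thm:finite-dimensional-energy} and \ref{thm:finite-dimensional-action} show that both criteria strictly prefer the two-shock solution.

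The only step needing care is justifying that the overlap formula collapses to the diagonal term. This rests on the identity $\nu_\pm(\rho_1)|_{\rho_1=\rho_m}=\nu_\pm(\rho_m)$, which is built into the Chiodaroli--Kreml parametrization, and on the normalizations $\beta^2(\rho_m)=v_m^2$ and $\epsilon_1(\rho_m)=0$. Everything else is direct substitution.
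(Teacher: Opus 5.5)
Your proposal is correct and matches the paper's proof essentially step for step. At $\rho_1=\rho_m$ only the middle overlap term in \eqref{eq:CK-K}, \eqref{eq:CK-I} survives, which gives $\Kcoef(\rho_m)=-(\nu_+(\rho_m)-\nu_-(\rho_m))\rho_m\epsilon_2$ and $\Icoef(\rho_m)=0$; $\Dcoef$ and $\Ecoef$ then follow from Theorem~\ref{thm:kinetic-internal-decomposition}, and your appeal to Corollary~\ref{cor:agreement-condition}(ii) just makes explicit the preference conclusion that the paper reads off from the signs.
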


\begin{proof}
Only the middle overlap term survives in \eqref{eq:CK-K}, \eqref{eq:CK-I}. Since $C(\rho_m)=v_m^2+\epsilon_2$, the kinetic mismatch is $\rho_m v_m^2-\rho_m C(\rho_m)=-\rho_m\epsilon_2$, while the internal energy mismatch vanishes because the densities agree. The values of $\Dcoef$, $\Ecoef$ follow from the decomposition $\Dcoef=\tfrac12\Kcoef+\Icoef$, $\Ecoef=-\tfrac12\Kcoef+\Icoef$, and recover \cite[(4.15)]{GKKS1}.
\end{proof}

From \cite[Theorem~2]{ChiodaroliKreml} and \cite[Theorem~6]{GKKS1} we know that there exist solutions in this family with $\Ecoef<0$ and   $\Dcoef<0$, i.e., where the entropy rate criterion prefers the convex integration solutions while the least action principle prefers the classical two-shock solution. They are realized as $\rho_1$ moves away from $\rho_m$.

\begin{example}\label{exampleKS}
For fixed data and $\epsilon_2$, the map $\rho_1\mapsto(\Kcoef(\rho_1),\Icoef(\rho_1))$ traces a curve in Figure \ref{fig:KI-phase-diagram} which starts, by Proposition~\ref{prop:CK-endpoint}, on the negative $\Kcoef$-axis. We consider symmetric two-shock Riemann data $v_\pm=(0,\mp V)$ with $\gamma=2$, $\rho_-=\rho_+=1$ and $V^2=891/10$, for which the intermediate state is $(\rho_m,v_m)=(10,0)$, and take $\epsilon_2=1/20$, with $\rho_1$ in $[6.4,\rho_m)$, $\rho_1$ decreasing away from the origin. By \cite[Theorem~2]{ChiodaroliKreml} the curve crosses the boundary $\Icoef=\Kcoef/2$, on which $\Ecoef=0$ and the Chiodaroli--Kreml phenomenon sets in, while by \cite[Theorem~6]{GKKS1}, for $\gamma=2$, it never reaches the other boundary $\Icoef=-\Kcoef/2$, on which $\Dcoef=0$. In contrast to the
Krupa-Sz\'ekelyhidi fan below, where the kinetic mismatch dominates and the two criteria agree, along the Chiodaroli--Kreml family the internal energy mismatch takes over and the criteria separate.  
\end{example}

\noindent \textbf{Markfelder–Pellhammer shock-contact data.}
A further point in the agreement region is provided by the self-similar one-wedge construction of Markfelder and Pellhammer \cite[Sections~2.2--2.5]{MP2}. For their data $p(\rho)=\rho^2$, the one-dimensional reference solution consists of a shock, a contact discontinuity, and a second shock, whereas the convex-integration solution contains one turbulent fan region.

\begin{proposition} \label{prop:MP-point}
For the Markfelder-Pellhammer solution, we obtain
\begin{align}
 \Kcoef_{\mathrm{MP}} &=  \frac{    2174152029300\sqrt{14}-1363173585011\sqrt{35}}{ 535964450  },\quad  
  \Icoef_{\mathrm{MP}}  &= \frac{ 60\sqrt{35}-150\sqrt{14}}7.  \label{eq:MP-I}
\end{align}
Consequently,
\begin{align}
  \Dcoef_{\mathrm{MP}}  &=  \frac{    2151182124300\sqrt{14}  -    1353985623011\sqrt{35}}{    1071928900 }  >0,  \label{eq:MP-D}  \\ 
 \Ecoef_{\mathrm{MP}}  &=\frac{    1372361547011\sqrt{35} -    2197121934300\sqrt{14} }{    1071928900 }  <0.  \label{eq:MP-E}
\end{align}
In particular, $\Kcoef_{\mathrm{MP}}>0$, $ -\frac{\Kcoef_{\mathrm{MP}}}{2} <  \Icoef_{\mathrm{MP}} <0$.
\end{proposition}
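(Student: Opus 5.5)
The plan is to substitute the Markfelder--Pellhammer data of \cite[Sections~2.2--2.5]{MP2} into the overlap formulas of Proposition~\ref{prop:discrete-overlap-formulas}, with $p(\rho)=\rho^2$. We then combine the results through Theorem~\ref{thm:kinetic-internal-decomposition}. Since $p(\rho)=\rho^2=\rho^2\eint'(\rho)$, we may take $\eint(\rho)=\rho$, so $P(\rho)=\rho\eint(\rho)=\rho^2$. By the remark on the additive constant, this choice does not affect the result.

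First, we record the reference solution. It has left and right states, the two intermediate states separated by the contact, and the speeds of the two shocks and of the contact; these give the partition $\sigma_1<\sigma_2<\sigma_3$. Next, we record the single-wedge fan: the speeds $\nu_0<\nu_1$, the density $\rho_1$ and the kinetic level $C_1$. The surds $\sqrt{14}$ and $\sqrt{35}$ should enter through the shock speeds and intermediate states of the $\rho^2$ Riemann solution.

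Second, we determine the ordering of the $\nu$'s relative to the $\sigma$'s. We expect $\nu_0<\sigma_1$ and $\nu_1>\sigma_3$, or a similar nesting, so that the fan interval overlaps every reference interval. This makes the weights $\omega_{1j}$ explicit differences of speeds, and the exterior overlaps are handled by the agreement of profiles outside the wedge. We then compute
\[
\Icoef=\sum_j\omega_{1j}\bigl(\rho_1^2-(\rho_j^{\mathrm{ref}})^2\bigr),\qquad \Kcoef=\sum_j\omega_{1j}\bigl(\rho_j^{\mathrm{ref}}|v_j^{\mathrm{ref}}|^2-\rho_1C_1\bigr).
\]
Both are finite sums in $\mathbb{Q}(\sqrt{14},\sqrt{35})$, and simplifying them should give \eqref{eq:MP-I}. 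We then obtain \eqref{eq:MP-D} and \eqref{eq:MP-E} from $\Dcoef=\tfrac12\Kcoef+\Icoef$ and $\Ecoef=-\tfrac12\Kcoef+\Icoef$ by putting everything over the common denominator $1071928900=2\cdot535964450$.

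Third, we verify the signs. Each claimed inequality has the form $a\sqrt{14}-b\sqrt{35}>0$ with $a,b>0$ rational, or the reverse. Because both terms are positive, this is equivalent to $14a^2>35b^2$, i.e.\ $2a^2>5b^2$, an exact integer comparison. For $\Icoef$ this reads $60\sqrt{35}<150\sqrt{14}$, i.e.\ $126000<315000$, so $\Icoef<0$. The inequalities $\Kcoef>0$, $\Dcoef>0$ and $\Ecoef<0$ are checked the same way with the large integers. Finally, $-\Kcoef/2<\Icoef$ is equivalent to $\Dcoef>0$, which completes the last claim.

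The main obstacle is the first two steps: transcribing the Markfelder--Pellhammer parameters correctly and fixing the overlap ordering, since a misplaced interface changes the weights. After that, the surd simplification is mechanical but error-prone. We would carry it out with exact symbolic arithmetic, in the spirit of the certification in Appendix~\ref{sec:symbolic-verification}.
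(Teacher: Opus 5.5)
Your plan is correct and is essentially the paper's proof. With $P(\rho)=\rho^2$ and the nesting $\mu_0<\sigma_-<0<\sigma_+<\mu_1$, the paper substitutes the Markfelder--Pellhammer overlap widths into the overlap formulas: the reference values $\rho|v|^2=\tfrac{540}{7},252,\tfrac{729}{7}$ and $P=1,49,4$ are compared with the fan values $5C_1$ and $25$, and $\Dcoef_{\mathrm{MP}},\Ecoef_{\mathrm{MP}}$ then follow from Theorem~\ref{thm:kinetic-internal-decomposition}. The differences are cosmetic: the two intermediate states share density $7$ and $\rho|v|^2=252$, so your four reference intervals collapse to the paper's single width $\sigma_+-\sigma_-=\tfrac{5\sqrt{14}}{7}$, and your integer-squaring sign check makes explicit what the paper's proof leaves implicit.
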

In Figure \ref{fig:KI-phase-diagram} this point lies in the lower part of the right hand sector: both the action rate and entropy rate criteria prefer the convex-integration solutions to the one-dimensional reference solution, but it lies outside the shaded region of contact solutions in Theorem \ref{thm:contact-agreement}.
\begin{proof}
The fan and reference speeds satisfy
$  \mu_0<\sigma_-<0<\sigma_+<\mu_1$,
with overlap widths
\[  \sigma_--\mu_0 =  \frac{25\sqrt{14}-15\sqrt{35}}7,  \qquad  \sigma_+-\sigma_- =  \frac{5\sqrt{14}}7,\]
and
\[  \mu_1-\sigma_+  =  \frac{20\sqrt{35}-30\sqrt{14}}7.\]
For $p(\rho)=\rho^2$, the pressure potential is $P(\rho)=\rho^2$. The reference kinetic energy densities $\rho|v|^2$ are $ \frac{540}{7}$, $252$, $\frac{729}{7}$, whereas the turbulent kinetic energy density is $5C_1$. The corresponding pressure-potential densities are $1$, $49$, $4$ and $25$, respectively. Substitution into the overlap formulas gives \eqref{eq:MP-I}. Equations \eqref{eq:MP-D}, \eqref{eq:MP-E} then follow from 
$  \Dcoef_{\mathrm{MP}} =  \frac12\Kcoef_{\mathrm{MP}}+\Icoef_{\mathrm{MP}}$,
$\Ecoef_{\mathrm{MP}}  =  -\frac12 \Kcoef_{\mathrm{MP}}+\Icoef_{\mathrm{MP}}$.
\end{proof}

\noindent \textbf{Parameter dependence.} The application to the classical contact discontinuity  involves numerically computed solutions \cite{KrupaSzekelyhidi}. Therefore we require quantitative stability results for the admissibility criteria under perturbations of the parameters. We let $U\subset \R^d$ be a neighborhood of the vector $  y\in \R^d$ of the parameters describing the fan and reference states, and assume that the ordering of all wave speeds and the
overlap pattern do not change in $U$.

\begin{proposition}
\label{prop:smooth-parameter-dependence}
Suppose that
$  y\mapsto(\nu_i(y),\rho_i(y),C_i(y),\eint_y(\rho_i(y)))$
and the analogous map for the reference solution are $C^1$ in $U$. Then
$ y\mapsto(\Dcoef(y),\Ecoef(y),\Kcoef(y),\Icoef(y))$ is $C^1$ in $U$. In particular, strict preference by the action or entropy rate criteria is an open condition in $y$.
\end{proposition}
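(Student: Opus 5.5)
The plan is to use the explicit overlap formulas of Proposition \ref{prop:discrete-overlap-formulas} together with the definitions \eqref{eq:mismatches}. All four coefficients are then finite sums of products of two kinds of factors: an overlap weight $\omega_{ij}(y)$, and a difference of densities built from $\rho_i(y)$, $C_i(y)$ and $\eint_y(\rho_i(y))$ and their reference analogues. Concretely,
\begin{equation*}
\Kcoef(y)=\sum_{i,j}\omega_{ij}(y)\bigl(\rho_j^{\mathrm{ref}}C_j^{\mathrm{ref}}-\rho_iC_i\bigr)(y),\qquad \Icoef(y)=\sum_{i,j}\omega_{ij}(y)\bigl[\rho_i\eint_y(\rho_i)-\rho_j^{\mathrm{ref}}\eint_y(\rho_j^{\mathrm{ref}})\bigr](y).
\end{equation*}
I would prove $C^1$ regularity of each factor separately. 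Since sums and products of $C^1$ functions are $C^1$, this gives the result for $\Kcoef$ and $\Icoef$. Theorem \ref{thm:kinetic-internal-decomposition} then gives $\Dcoef=\frac12\Kcoef+\Icoef$ and $\Ecoef=-\frac12\Kcoef+\Icoef$, which are linear combinations and hence also $C^1$.

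For the density factors, the hypothesis states directly that $y\mapsto\rho_i(y)$, $C_i(y)$ and $\eint_y(\rho_i(y))$ are $C^1$, and likewise for the reference quantities. Here $C_j^{\mathrm{ref}}=|v_j^{\mathrm{ref}}|^2$ is understood as part of the "analogous map". Products such as $\rho_iC_i$ and $\rho_i\eint_y(\rho_i)$ are therefore $C^1$.

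The main obstacle is the overlap weights. The expression $\omega_{ij}=[\min\{\nu_i,\sigma_j\}-\max\{\nu_{i-1},\sigma_{j-1}\}]_+$ is only Lipschitz in general: $\min$, $\max$ and $[\cdot]_+$ have kinks exactly where two speeds cross or where an overlap interval degenerates. This is where the standing assumption enters, namely that the ordering of all wave speeds and the overlap pattern are unchanged in $U$. Under that assumption the following quantities are constant on $U$: which of $\nu_i,\sigma_j$ attains the minimum, which of $\nu_{i-1},\sigma_{j-1}$ attains the maximum, and whether the overlap is empty. Hence on $U$ each $\omega_{ij}$ is either identically $0$ or identically equal to one fixed difference $\tau-\tau'$ of two speeds among the $\nu$'s and $\sigma$'s. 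The infinite endpoints $\sigma_0=-\infty$ and $\sigma_M=\infty$ never appear in such a difference once the min/max has been resolved, because the $\nu_i$ are finite. I would also state that the reference speeds $\sigma_j(y)$ are $C^1$, which is part of the analogous map for the reference solution. It follows that each $\omega_{ij}$ is $C^1$ on $U$.

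For the final claim, Theorems \ref{thm:finite-dimensional-energy} and \ref{thm:finite-dimensional-action} say that strict preference by the entropy rate criterion is equivalent to $\Ecoef<0$, and by the action rate criterion to $\Dcoef>0$; for the reference solution the conditions are the reverse strict inequalities. Each of these is the preimage of an open half-line under a continuous map, so it is open in $U$.
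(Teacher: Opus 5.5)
Your proposal is correct and follows the paper's proof: with the wave ordering and overlap pattern frozen on $U$, each $\omega_{ij}$ is either identically zero or a fixed difference of two finite speeds, so all four coefficients are finite sums of products of $C^1$ functions, and strict preference ($\Ecoef<0$, resp.\ $\Dcoef>0$) is open by continuity. Your extra remarks on the infinite endpoints $\sigma_0,\sigma_M$, and on obtaining $\Dcoef,\Ecoef$ from $\Kcoef,\Icoef$ via Theorem~\ref{thm:kinetic-internal-decomposition}, are harmless refinements of the same argument.
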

\begin{proof}
When the wave ordering is fixed, every $\omega_{ij}$ is an affine combination of wave speeds. Hence, each coefficient is a finite sum
of products of continuously differentiable functions. The final assertion follows by continuity.
\end{proof}

\section{Application to the Krupa-Sz\'ekelyhidi fan} \label{sec:KS-application}

We now specialize the general theory to the  solutions constructed by Krupa and Sz\'ekelyhidi \cite{KrupaSzekelyhidi}. Their construction assumes Riemann data that for the one-dimensional self-similar reference solution is a planar contact discontinuity. For a specially constructed smooth pressure law, they produce a strict admissible fan subsolution with three turbulent regions. Convex integration then yields infinitely many genuinely two-dimensional
bounded entropic weak solutions.

The relevant Krupa and Sz\'{e}kelyhidi (KS) fan consists of five regions:
\begin{align*}
  &P_- =  \{x_2<\nu_-t\}, \qquad P_+ =\{x_2>\nu_+t\},  \\
  &P_1=  \{\nu_-t<x_2<\nu_1t\}, \quad  P_2= \{\nu_1t<x_2<\nu_2t\}, \quad  P_3=  \{\nu_2t<x_2<\nu_+t\}.
\end{align*}
 separated by four speeds $\nu_-<\nu_1<\nu_2<\nu_+$. The exterior data satisfy $ \rho_-=\rho_+=:\rho_0$
and $  v_{-2}=v_{+2}=:s$. The wave speeds in the example of \cite{KrupaSzekelyhidi} satisfy $  \nu_-<\nu_1<s<\nu_2<\nu_+$. The contact line therefore crosses the central turbulent region.

As above, we denote by $ \rho_i$ the fan densities, by $C_i$ the kinetic energy levels and by  $e_i:=\eint(\rho_i)$ the specific internal energy values ($i=1,2,3$). Consequently, the  mechanical energy and Lagrangian densities are given by
\begin{equation}
   H_i^{\mathrm{KS}} :=  \rho_i  \left(  \frac{C_i}{2}+e_i \right),\quad \Lambda_i^{\mathrm{KS}}:=  \rho_i
  \left(    \frac{C_i}{2}-e_i \right)  \qquad  (i=1,2,3).
\end{equation}

Recall finally the classical contact discontinuity, as given by \eqref{eq:classical-contact-solution},
\begin{equation*}
  (\rho^{\mathrm c},v^{\mathrm c})(x,t) =
  \begin{cases}
    (\rho_0,v_-),      &x_2<st,\\[1mm]
    (\rho_0,v_+),      &x_2>st.
  \end{cases}
\end{equation*}
 The classical mechanical energy and Lagrangian densities are finally given as
\begin{align*}
  H_\pm^{\mathrm c}:=  \rho_0   \left[    \frac12    \left(      v_{\pm1}^2+s^2  \right) +    \eint(\rho_0)  \right],\quad   \Lambda_\pm^{\mathrm c}:=  \rho_0  \left[   \frac12 \left(    v_{\pm1}^2+s^2 \right) -    \eint(\rho_0) \right].
\end{align*}
We use the formulas from Proposition \ref{prop:fan-contact-coefficients}, specialized to  three turbulent regions with speeds
$\nu_-<\nu_1<\nu_2<\nu_+$ and $\nu_1<s<\nu_2$.
From Equations \eqref{eq:three-region-entropy-coefficient} and \eqref{eq:three-region-action-coefficient}, we obtain:
\begin{proposition}
\label{prop:KS-action-coefficient}
For every convex integration realization generated by the KS fan,
\begin{equation*}
   \Energy_{\mathrm{KS}}(t) -  \Energy_{\mathrm c}(t) =  \ell t\,\Ecoef_{\mathrm{KS}}, \quad \A_{\mathrm c}(t)-\A_{\mathrm{KS}}(t)  =  \frac{\ell t^2}{2} \Dcoef_{\mathrm{KS}},
\end{equation*}
where
\begin{align}
  \Ecoef_{\mathrm{KS}}
&  =  (\nu_1-\nu_-) \left(    H_1^{\mathrm{KS}}-H_-^{\mathrm c}  \right) +  (s-\nu_1)
  \left(    H_2^{\mathrm{KS}}-H_-^{\mathrm c}  \right) \nonumber \\ &\qquad +  (\nu_2-s)  \left(    H_2^{\mathrm{KS}}-H_+^{\mathrm c}   \right)+ (\nu_+-\nu_2)  \left(    H_3^{\mathrm{KS}}-H_+^{\mathrm c} \right),
  \label{eq:three-region-entropy-coefficient}\\
  \Dcoef_{\mathrm{KS}}
  &=  (\nu_1-\nu_-) \left(    \Lambda_-^{\mathrm c}-\Lambda_1^{\mathrm{KS}}
  \right)+  (s-\nu_1)  \left(    \Lambda_-^{\mathrm c}-\Lambda_2^{\mathrm{KS}}
  \right) \nonumber \\ &\qquad +(\nu_2-s) \left(    \Lambda_+^{\mathrm c}-\Lambda_2^{\mathrm{KS}}
  \right)+  (\nu_+-\nu_2)  \left(    \Lambda_+^{\mathrm c}-\Lambda_3^{\mathrm{KS}}
  \right).  \label{eq:three-region-action-coefficient}
\end{align}
All convex integration realizations generated by this fan have the same values of $\Ecoef_{\mathrm{KS}}, \Dcoef_{\mathrm{KS}}$.
\end{proposition}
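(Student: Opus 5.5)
The proof is a direct specialization of the general results of Section \ref{sec:finite-dimensional-formulas}, so I would carry it out in three short steps.

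\textbf{Step 1: fix the partitions.} I would identify the KS fan with Definition \ref{def:fan} for $N=3$, with fan speeds $\nu_0:=\nu_-$, $\nu_1$, $\nu_2$, $\nu_3:=\nu_+$. The classical contact \eqref{eq:classical-contact-solution} is a piecewise constant self-similar reference solution with $M=2$ and partition $\sigma_0=-\infty<\sigma_1=s<\sigma_2=\infty$. Both share the Riemann data $\rho_\pm=\rho_0$, $v_{\pm2}=s$. Since $\nu_-<s<\nu_+$, the exterior profiles agree for $|\xi|$ large: on $P_-$ both equal $(\rho_0,v_-)$, and on $P_+$ both equal $(\rho_0,v_+)$. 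This is exactly the standing assumption of Section \ref{sec:finite-dimensional-formulas} and the hypothesis of Proposition \ref{prop:discrete-overlap-formulas}.

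\textbf{Step 2: the two identities.} Theorem \ref{thm:finite-dimensional-energy} gives $\Energy_{\mathrm{KS}}(t)-\Energy_{\mathrm c}(t)=\ell t\,\Ecoef$. Theorem \ref{thm:finite-dimensional-action} gives $\A_{\mathrm c}(t)-\A_{\mathrm{KS}}(t)=\frac{\ell t^2}{2}\Dcoef$. Both are understood in the renormalized sense of Remark \ref{rem:renormal}, on the strip $\Omega_\ell$.

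\textbf{Step 3: the explicit coefficients.} I would evaluate $\Ecoef,\Dcoef$ using Proposition \ref{prop:fan-contact-coefficients} with $k=2$, which applies because $\nu_1<s<\nu_2$. The overlap weights are:
\begin{itemize}
\item $\nu_1-\nu_-$ for $P_1$ against the left state;
\item $s-\nu_1$ for $P_2$ against the left state;
\item $\nu_2-s$ for $P_2$ against the right state;
\item $\nu_+-\nu_2$ for $P_3$ against the right state.
\end{itemize}
I would then substitute the fan densities $H_i=H_i^{\mathrm{KS}}$ and $\Lambda_i=\Lambda_i^{\mathrm{KS}}$. These agree with \eqref{eq:fanprof1} because $\eint_i=\eint(\rho_i)$. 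I would also substitute the contact densities $H_\pm^{\mathrm c}$ and $\Lambda_\pm^{\mathrm c}$, using $|v_\pm|^2=v_{\pm1}^2+s^2$. This reproduces \eqref{eq:three-region-entropy-coefficient} and \eqref{eq:three-region-action-coefficient} term by term.

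\textbf{Independence of the realization.} By Proposition \ref{prop:rigidity-fixed-fan}, any two realizations of the KS fan have identical $H$ and $\Lagr$ almost everywhere. Hence $\Ecoef_{\mathrm{KS}}$ and $\Dcoef_{\mathrm{KS}}$ depend only on the fan.

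\textbf{Possible obstacle.} None of these steps is substantive. The only point needing care is bookkeeping. The labels $\nu_\pm$ must be matched with $\nu_0,\nu_N$, and the contact must sit strictly inside the middle region, so that only $P_2$ is split and no overlap weight is negative or degenerate. Both facts follow from the stated ordering $\nu_-<\nu_1<s<\nu_2<\nu_+$.
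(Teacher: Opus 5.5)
Your proposal is correct and follows essentially the same route as the paper: the paper proves the formulas as a special case of Proposition~\ref{prop:fan-contact-coefficients} (here with $k=2$, $\nu_0=\nu_-$, $\nu_3=\nu_+$, $\sigma_1=s$) and obtains independence of the realization from Proposition~\ref{prop:rigidity-fixed-fan}. Your explicit appeal to Theorems~\ref{thm:finite-dimensional-energy} and~\ref{thm:finite-dimensional-action} for the two displayed identities, and your check that the exterior profiles agree, simply spell out steps the paper leaves implicit.
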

\begin{proof}
The formulas are a special case of Proposition \ref{prop:fan-contact-coefficients}. Proposition \ref{prop:rigidity-fixed-fan} confirms the independence of the convex integration solution.
\end{proof}
Note that the classical contact discontinuity has no entropy production at the
contact interface and $\frac{d}{dt}\Energy_{\mathrm c}(t)=0$. We conclude $  \Ecoef_{\mathrm{KS}}<0$ if and only if the fan realizations strictly dissipate energy. This is confirmed by the exact evaluation below.

We similarly compute the kinetic and internal energy mismatches,  $\Kcoef_{\mathrm{KS}}, \Icoef_{\mathrm{KS}}$:
\begin{align}
  \Kcoef_{\mathrm{KS}}
  &=  (\nu_1-\nu_-)  \left(   \rho_0|v_-|^2-\rho_1C_1 \right)+  (s-\nu_1) \left(    \rho_0|v_-|^2-\rho_2C_2
  \right)\nonumber \\
 & \qquad +  (\nu_2-s) \left(    \rho_0|v_+|^2-\rho_2C_2  \right)+  (\nu_+-\nu_2)  \left(    \rho_0|v_+|^2-\rho_3C_3 \right),\\ 
 \Icoef_{\mathrm{KS}}
  &=  (\nu_1-\nu_-)  \left[    \rho_1e_1-\rho_0\eint(\rho_0) \right]+ (s-\nu_1)  \left[    \rho_2e_2-\rho_0\eint(\rho_0)  \right]\nonumber \\
  &\qquad +  (\nu_2-s)  \left[    \rho_2e_2-\rho_0\eint(\rho_0)  \right]+  (\nu_+-\nu_2)
  \left[    \rho_3e_3-\rho_0\eint(\rho_0)  \right] \label{eq:KS-kinetic-mismatch-explicit}\\
& =   (\nu_1-\nu_-)  \left[    \rho_1e_1-\rho_0\eint(\rho_0) \right]+  (\nu_2-\nu_1)  \left[    \rho_2e_2-\rho_0\eint(\rho_0)  \right]\nonumber\\& \qquad +  (\nu_+-\nu_2)  \left[    \rho_3e_3-\rho_0\eint(\rho_0) \right].\nonumber
\end{align}
Here, $ |v_\pm|^2=v_{\pm1}^2+s^2$, and the two middle terms in
\eqref{eq:KS-kinetic-mismatch-explicit} have been combined in the last step, since the classical density and internal energy agree
on both sides of the contact.

From Theorem \ref{thm:kinetic-internal-decomposition},
$  \Dcoef_{\mathrm{KS}}=  \frac12\Kcoef_{\mathrm{KS}}+  \Icoef_{\mathrm{KS}}$ and
$\Ecoef_{\mathrm{KS}} =  -\frac12\Kcoef_{\mathrm{KS}} +  \Icoef_{\mathrm{KS}}$.\\

By Theorem \ref{thm:contact-agreement}, the signs of the four coefficients are determined in advance for every admissible fan competitor to the classical contact discontinuity, and strictly so in the strictly dissipative case. The remaining task, undertaken in the next two sections, is to evaluate the four coefficients at the explicit parameters of \cite{KrupaSzekelyhidi}, thus locating the KS fan in Figure \ref{fig:KI-phase-diagram}, and to certify that these values and margins persist for the exact fan subsolution. 

\section{Stability and passage to the exact KS fan} \label{sec:KS-stability}

The construction of \cite{KrupaSzekelyhidi} proceeds in two stages: an explicit rational parameter vector specifies an approximate fan
subsolution, which is subsequently corrected into an exact admissible fan subsolution by a computer-assisted Newton argument. We do not
reproduce the existence argument. Instead, we record the rational data, evaluate the coefficients $\Dcoef_{\mathrm{KS}}$, $\Ecoef_{\mathrm{KS}}$, $\Kcoef_{\mathrm{KS}}$, $\Icoef_{\mathrm{KS}}$ there, and isolate the quantitative conditions needed to translate the resulting inequalities to the exact fan. This makes the admissibility calculation independent of the existence proof. The conditions are verified for the explicit fan of \cite{KrupaSzekelyhidi} in Subsection \ref{subsec:certification}. %Both action and entropy rate criteria therefore prefer the convex integration solutions associated with the exact fan to the classical contact discontinuity.
The conclusion that both criteria prefer the convex integration solutions associated with the exact fan to the classical contact
discontinuity follows already, unconditionally, from Theorem \ref{thm:contact-agreement} and is recorded in Corollary~\ref{cor:KS-unconditional} below. The certification quantifies it, by exact coefficient values and margins.

\subsection{Evaluation at the rational parameter vector} \label{sec:KS-rational-data}

In the first, computational stage of \cite{KrupaSzekelyhidi}, each turbulent region $P_i$ ($i=1,2,3$) carries a density $\rho_i>0$, a relaxed velocity $v_i=(\alpha_i,\beta_i)\in\R^2$, a trace-free symmetric stress
\begin{equation*}
  u_i = \begin{pmatrix} \gamma_i & \delta_i \\ \delta_i & -\gamma_i \end{pmatrix} \in S^{2\times 2}_0 ,
\end{equation*}
encoded by the pair $(\gamma_i,\delta_i)$, and a kinetic energy level $C_i>0$, while $e_i:=\eint(\rho_i)$ and $e_i':=\eint'(\rho_i)$ denote the values of the designed internal energy function and of its derivative at $\rho_i$. Together with the fan speeds $\nu_-<\nu_1<\nu_2<\nu_+$ and the exterior data $(\rho_0,v_\pm,s)$, these constants satisfy the subsolution and admissibility inequalities of Definition \ref{def:fan} strictly, whereas the equality constraints (the Rankine-Hugoniot conditions at the four interfaces) hold exactly at the first two interfaces and up to residuals below $10^{-11}$ at the two right-most interfaces.

Table \ref{tab:KS-rational-data} records the rational parameters entering the four coefficients. A hat denotes evaluation at these
rational values. All parameters produced by the numerical search are dyadic rationals $p/2^{k}$ (exact double-precision values). The two non-dyadic entries $\widehat\nu_-$, $\widehat\nu_1$ are the exact rational solutions of the Rankine-Hugoniot conditions at the first two interfaces, solved in \cite{KrupaSzekelyhidi} together with $\delta_1,\delta_2,e_1',e_2'$. The remaining parameters $\alpha_i,\beta_i,\gamma_i,\delta_i,e_i'$ ($i=1,2,3$) and $e_0'$ do not enter the four coefficients and are recorded in \cite{KrupaSzekelyhidi,KScode}.

\begin{table}[t]
  \centering\small
  \renewcommand{\arraystretch}{1.45}
  \begin{tabular}{@{}llll@{}}
    \hline
    $\widehat\rho_0=\tfrac{2708112612978501}{2^{48}}$ & $\approx 9.6211$ &
    $\widehat\rho_1=\tfrac{6811063536043807}{2^{49}}$ & $\approx 12.0989$ \\
    $\widehat\rho_2=\tfrac{2057060350258899}{2^{49}}$ & $\approx 3.6541$ &
    $\widehat\rho_3=\tfrac{3062207031116133}{2^{48}}$ & $\approx 10.8791$ \\[2pt]
    $\widehat e_0=-\tfrac{5041529442624971}{2^{41}}$ & $\approx -2292.62$ &
    $\widehat e_1=-\tfrac{5015532875605977}{2^{41}}$ & $\approx -2280.80$ \\
    $\widehat e_2=-\tfrac{5073206593829053}{2^{41}}$ & $\approx -2307.03$ &
    $\widehat e_3=-\tfrac{2515400677054201}{2^{40}}$ & $\approx -2287.74$ \\[2pt]
    $\widehat v_{-1}=-\tfrac{4098844157247653}{2^{46}}$ & $\approx -58.2481$ &
    $\widehat v_{+1}=\tfrac{3603433899522037}{2^{49}}$ & $\approx 6.4010$ \\
    $\widehat s=-\tfrac{996118042660627}{2^{46}}$ & $\approx -14.1557$ &
    $\widehat C_1=\tfrac{510415269881361}{2^{37}}$ & $\approx 3713.76$ \\
    $\widehat C_2=\tfrac{1515879700153707}{2^{41}}$ & $\approx 689.34$ &
    $\widehat C_3=\tfrac{1855257252703141}{2^{43}}$ & $\approx 210.92$ \\[2pt]
    $\widehat\nu_2=-\tfrac{4856156003780791}{2^{49}}$ & $\approx -8.6263$ &
    $\widehat\nu_+=\tfrac{7162856387903725}{2^{49}}$ & $\approx 12.7238$ \\[2pt]
    \multicolumn{3}{@{}l}{
      $\widehat\nu_-
       =-\tfrac{6486283176597739958874052307549}
               {196306040423407104692364247040}$}
    & $\approx -33.0417$ \\
    \multicolumn{3}{@{}l}{
      $\widehat\nu_1
       =-\tfrac{1153852086001065889673487658885}
               {60824224363690518566334889984}$}
    & $\approx -18.9703$ \\
    \hline
  \end{tabular}
  \caption{Rational parameters of the KS fan, as in the appendix of  \cite{KrupaSzekelyhidi}. Decimal approximations are included for orientation.}
  \label{tab:KS-rational-data}
\end{table}

In the second stage of \cite{KrupaSzekelyhidi}, the residual at the two right-most interfaces is removed by a Newton correction of the six
parameters
\begin{equation}
  x=(\alpha_3,\beta_3,\delta_3,\rho_3,\nu_+,\nu_2)\in\R^6,
  \label{eq:KS-parameter-vector}
\end{equation}
which are the only parameters entering the equality constraints at these interfaces. All remaining parameters retain their rational values. We denote the rational vector by
$\widehat x=(\widehat\alpha_3,\widehat\beta_3,\widehat\delta_3,
\widehat\rho_3,\widehat\nu_+,\widehat\nu_2)$, by $x^\ast$ 
the exact parameter vector, and by $\delta_{\mathrm{KS}}:=|x^\ast-\widehat x|$ the size of the correction. The results below require only an upper bound for $\delta_{\mathrm{KS}}$, not the unknown vector $x^\ast$.

Evaluation of the formulas of Proposition \ref{prop:KS-action-coefficient} at the data of Table \ref{tab:KS-rational-data} in exact rational arithmetic yields
\begin{align}
  \widehat{\Dcoef}_{\mathrm{KS}}
  &= 6609.677851759956\ldots,
  \qquad  \widehat{\Ecoef}_{\mathrm{KS}}  = -1407.036311937891\ldots,
  \label{eq:KS-rational-values}\\
  \widehat{\Kcoef}_{\mathrm{KS}}
  &= 8016.714163697847\ldots,  \qquad \widehat{\Icoef}_{\mathrm{KS}} = 2601.320769911032\ldots.
  \label{eq:KS-rational-KI-values}
\end{align}
In particular,
\begin{equation}  \label{eq:KS-rational-sign-bounds}
  \widehat{\Dcoef}_{\mathrm{KS}}>6609,  \qquad \widehat{\Ecoef}_{\mathrm{KS}}<-1407, 
\end{equation}
and
\begin{equation}  \label{eq:KS-rational-agreement}
  \widehat{\Kcoef}_{\mathrm{KS}}>0, \qquad  2\,\bigl|\widehat{\Icoef}_{\mathrm{KS}}\bigr|  <\widehat{\Kcoef}_{\mathrm{KS}},
\end{equation}
with agreement margin $\widehat{\Kcoef}_{\mathrm{KS}}/2 -|\widehat{\Icoef}_{\mathrm{KS}}| =1407.036311937891\ldots$, which coincides with $-\widehat{\Ecoef}_{\mathrm{KS}}$ because $\widehat{\Icoef}_{\mathrm{KS}}>0$. The strict inequalities \eqref{eq:KS-rational-sign-bounds}, \eqref{eq:KS-rational-agreement} have been verified symbolically, without floating-point evaluation, by the ancillary Maple worksheet described in Appendix \ref{sec:symbolic-verification}. The decimal expansions are only for illustration. Inequalities \eqref{eq:KS-rational-sign-bounds} imply:
\begin{proposition} \label{prop:KS-rational-selection} At the rational parameter vector of \cite{KrupaSzekelyhidi}, both the action and the entropy rate criterion prefer the convex integration fan to the classical contact discontinuity.
\end{proposition}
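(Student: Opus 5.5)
The plan is to reduce the statement to the sign criteria of Theorems \ref{thm:finite-dimensional-energy} and \ref{thm:finite-dimensional-action}, evaluated at the hatted data. By Proposition \ref{prop:KS-action-coefficient}, every convex integration realization generated by the fan with parameters from Table \ref{tab:KS-rational-data} satisfies
\[
\Energy_{\mathrm{KS}}(t)-\Energy_{\mathrm c}(t)=\ell t\,\widehat\Ecoef_{\mathrm{KS}},
\qquad
\A_{\mathrm c}(t)-\A_{\mathrm{KS}}(t)=\tfrac{\ell t^2}{2}\,\widehat\Dcoef_{\mathrm{KS}},
\]
where the coefficients are given by the four-term overlap sums \eqref{eq:three-region-entropy-coefficient}, \eqref{eq:three-region-action-coefficient}. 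These formulas are purely algebraic in the parameters, so they may be evaluated at the rational vector whether or not it is an exact subsolution. They are valid there because the ordering $\widehat\nu_-<\widehat\nu_1<\widehat s<\widehat\nu_2<\widehat\nu_+$ holds, which I would confirm directly from the table values. Since the proposition concerns the rational vector, I would read "preference" as the sign of these coefficients at $\widehat x$.

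The key input is \eqref{eq:KS-rational-sign-bounds}. First, $\widehat\Ecoef_{\mathrm{KS}}<-1407<0$, so the energy difference is $\ell t\,\widehat\Ecoef_{\mathrm{KS}}$, which has right derivative $\ell\widehat\Ecoef_{\mathrm{KS}}<0$ for every $t$ and $\ell$. Hence the entropy rate criterion strictly prefers the fan. Second, $\widehat\Dcoef_{\mathrm{KS}}>6609>0$, so $\A_{\mathrm{KS}}(t)-\A_{\mathrm c}(t)=-\tfrac{\ell t^2}{2}\widehat\Dcoef_{\mathrm{KS}}<0$ for all $t>0$. This gives strict LAAP${}_0$ preference with any $t_1$. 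As a consistency check, \eqref{eq:KS-rational-agreement} places the point in sector (i) of Corollary \ref{cor:agreement-condition}, since $\widehat\Kcoef>0$ and $|\widehat\Icoef|<\widehat\Kcoef/2$.

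The only substantive step is certifying the two sign inequalities rigorously. This amounts to an exact evaluation in rational arithmetic. The inputs are dyadic rationals, together with the two rational speeds $\widehat\nu_-,\widehat\nu_1$, so each coefficient is a single explicit rational number and its comparison with $6609$ or $-1407$ is exact; this is the content of the Maple verification in Appendix \ref{sec:symbolic-verification}. I expect the main obstacle to be bookkeeping rather than anything conceptual: the large numerators and the cancellation between terms of size $10^4$ must be handled exactly. Floating-point evaluation alone would not suffice as a proof, although the margins of roughly $1400$ make the result robust.
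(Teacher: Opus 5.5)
Your proposal is correct and follows essentially the paper's own argument. The paper also deduces the proposition directly from the exactly certified bounds $\widehat\Dcoef_{\mathrm{KS}}>6609$ and $\widehat\Ecoef_{\mathrm{KS}}<-1407$, evaluated from the overlap formulas under the verified wave ordering, together with the sign criteria of Theorems \ref{thm:finite-dimensional-energy} and \ref{thm:finite-dimensional-action}. As you note, and as the paper stresses, $\widehat x$ is not an exact fan subsolution, so the preference is read off the coefficients and not obtained from Theorem \ref{thm:contact-agreement}.
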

Accordingly, in the $(\Kcoef,\Icoef)$-plane of Figure \ref{fig:KI-phase-diagram} the rational vector lies inside the sector in which both criteria prefer the fan.

Proposition \ref{prop:KS-rational-selection} is not an instance of Theorem \ref{thm:contact-agreement}. At $\hat x$ the
Rankine-Hugoniot conditions at the interfaces $\nu_2$ and $\nu_+$ hold
only up to residuals below $10^{-11}$, so $\hat x$ is not a fan
subsolution in the sense of Definition~\ref{def:fan}, and the  identity \eqref{eq:contact-fan-summed-mass} from the proof of the general Theorem \ref{thm:contact-agreement} is not available. The inequalities  $\widehat{\Dcoef}_{\mathrm{KS}}>0$, $\widehat{\Ecoef}_{\mathrm{KS}}<0$ are therefore certified directly, independently of Theorem \ref{thm:contact-agreement}. The evaluation at $\hat x$ also provides the numerical location of the KS fan in Figure \ref{fig:KI-phase-diagram} and the size of the margins.

\subsection{Preservation of the fan geometry and rate coefficients}
\label{sec:KS-geometry}

By the construction of \cite{KrupaSzekelyhidi}, see also Theorem \ref{thm:deltacert}, the exact parameter vector $x^*$ defines a strict
admissible fan subsolution for contact Riemann data of the form \eqref{eq:universal-contact-data}. Theorem \ref{thm:contact-agreement} 
applies at $x^*$ and clarifies the qualitative comparison before any perturbation argument:

\begin{corollary}\label{cor:KS-unconditional}
For the exact KS fan,
\[
  D_{\mathrm{KS}}(x^*)>0,
  \qquad
  E_{\mathrm{KS}}(x^*)<0,
  \qquad
  0<I_{\mathrm{KS}}(x^*)<\tfrac12\,K_{\mathrm{KS}}(x^*).
\]
In particular, both the action and the entropy rate criteria strictly
prefer the convex integration solutions associated with the exact KS
fan to the classical contact discontinuity, the two criteria agree, and
the exact fan lies in the open right hand sector of
Figure \ref{fig:KI-phase-diagram}. 
\end{corollary}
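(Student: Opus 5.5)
The plan is to apply Theorem \ref{thm:contact-agreement} directly at the exact parameter vector $x^\ast$, so the corollary follows with no perturbation argument. By the construction of \cite{KrupaSzekelyhidi} (see also Theorem \ref{thm:deltacert}), $x^\ast$ defines a strict admissible piecewise constant fan subsolution. Its Riemann data are the contact data \eqref{eq:universal-contact-data}: $\rho_-=\rho_+=\rho_0$ and $v_{-2}=v_{+2}=s$. The designed pressure law is smooth with $p'>0$, so the hypotheses of Theorem \ref{thm:contact-agreement} hold. That theorem immediately gives $\Icoef_{\mathrm{KS}}(x^\ast)\ge0$, $\Ecoef_{\mathrm{KS}}(x^\ast)\le0$, $\Kcoef_{\mathrm{KS}}\ge2\Icoef_{\mathrm{KS}}$ and $\Dcoef_{\mathrm{KS}}\ge0$. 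The remaining task is to upgrade each of these to a strict inequality, using the two strictness clauses of the theorem.

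For $\Icoef_{\mathrm{KS}}(x^\ast)>0$, I will show that the fan density is not identically $\rho_0$. The densities $\rho_1$ and $\rho_2$ are not part of the Newton vector \eqref{eq:KS-parameter-vector}, so they keep their rational values $\widehat\rho_1\approx12.10$ and $\widehat\rho_2\approx3.65$. Both differ from $\widehat\rho_0\approx9.62$, which is also unchanged. The fan widths are positive, since $\nu_-<\nu_1<\nu_2<\nu_+$ is part of the admissible fan geometry at $x^\ast$. Hence strict Jensen in the proof of Theorem \ref{thm:contact-agreement} gives $\Icoef_{\mathrm{KS}}(x^\ast)>0$.

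For $\Ecoef_{\mathrm{KS}}(x^\ast)<0$, I need at least one interface entropy inequality \eqref{eq:contact-fan-entropy-RH} to be strict. This is the step I expect to require care. A first route uses the fact that the fan of \cite{KrupaSzekelyhidi} is constructed to satisfy the admissibility inequalities strictly. Moreover, the first two interfaces involve only unperturbed rational parameters, so the strict inequality verified there in exact arithmetic at $\widehat x$ holds verbatim at $x^\ast$. If one prefers not to rely on that, a fallback uses the rate coefficient itself. By Proposition \ref{prop:smooth-parameter-dependence}, $\Ecoef_{\mathrm{KS}}$ is $C^1$ near $\widehat x$ on a neighbourhood with fixed wave ordering. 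Since $\widehat{\Ecoef}_{\mathrm{KS}}<-1407$ by \eqref{eq:KS-rational-sign-bounds} and $\delta_{\mathrm{KS}}$ is tiny, a bound on the gradient gives $\Ecoef_{\mathrm{KS}}(x^\ast)<0$. This is the certification developed subsequently. Either route yields strictness, and the strict clause of Theorem \ref{thm:contact-agreement} then gives $\Kcoef>2\Icoef$ and $\Dcoef>0$.

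Finally, I will combine $0<\Icoef_{\mathrm{KS}}<\tfrac12\Kcoef_{\mathrm{KS}}$ with part $(i)$ of Corollary \ref{cor:agreement-condition}. This shows that both criteria strictly prefer the fan, and hence that the two criteria agree. Since $\Kcoef>0$ and $|\Icoef|<\Kcoef/2$, the exact fan lies in the open right-hand sector of Figure \ref{fig:KI-phase-diagram}. By Proposition \ref{prop:rigidity-fixed-fan}, these conclusions hold for every convex integration realization generated by the exact fan.
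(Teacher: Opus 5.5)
Your proposal is correct and follows the paper's own proof: apply Theorem \ref{thm:contact-agreement} at $x^\ast$. Strictness of $\Icoef$ comes from $\rho_1$ (and $\rho_2$) keeping their rational values $\neq\rho_0$, and strictness of $\Ecoef$ from the strict entropy inequalities at $\nu_-,\nu_1$, which do not involve the corrected variables; the conclusion then follows from Corollary \ref{cor:agreement-condition}(i). Your fallback via the Lipschitz certification is what the paper carries out later in Theorem \ref{thm:unconditional}, and it is not needed here.
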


\begin{proof}
The interface admissibility inequalities hold strictly at $x^*$: at
$\nu_-$ and $\nu_1$ they do not involve $x$ and hold strictly at $\hat x$, hence
unchanged at $x^*$, while at $\nu_2$ and $\nu_+$ they persist by
Theorem \ref{thm:deltacert}. Moreover $\rho_1$ retains its rational value,
so the fan density is not identically $\rho_0$. Parts (i) and (ii) of
Theorem \ref{thm:contact-agreement} give the displayed inequalities, and the
preference and agreement statements follow with
Corollary \ref{cor:agreement-condition}(i).
\end{proof}

The present subsection now transfers the certified values \eqref{eq:KS-rational-values}, \eqref{eq:KS-rational-KI-values} and their margins
from $\hat x$ to $x^*$.
The formulas of Proposition \ref{prop:KS-action-coefficient} require the ordering $\nu_1<s<\nu_2<\nu_+$, and only $\nu_2$ and $\nu_+$ vary
among the six corrected parameters $x$. The following result transfers the certified lower bound $D_{\mathrm{KS}}(\hat x)>6609$ to the exact fan. 
\begin{lemma}
\label{lem:KS-ordering-stability}
Assume
\begin{equation}   \label{eq:KS-ordering-condition}
  \delta_{\mathrm{KS}}   <\min\left\{    \widehat\nu_2-s,\;  \frac{\widehat\nu_+-\widehat\nu_2}{2}  \right\}.
\end{equation}
Then the ordering of the exact wave speeds is preserved,  $\nu_1<s<\nu_2^\ast<\nu_+^\ast$,
and the algebraic expressions for $\Dcoef_{\mathrm{KS}}$, $\Ecoef_{\mathrm{KS}}$ in Proposition \ref{prop:KS-action-coefficient} remain valid at $x^\ast$.
\end{lemma}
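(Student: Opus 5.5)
The plan is a direct perturbation argument on the two moving speeds. Since $\delta_{\mathrm{KS}}=|x^\ast-\widehat x|$ bounds every component of $x^\ast-\widehat x$ (Euclidean or max norm), we have $|\nu_2^\ast-\widehat\nu_2|\le\delta_{\mathrm{KS}}$ and $|\nu_+^\ast-\widehat\nu_+|\le\delta_{\mathrm{KS}}$. The remaining speeds $\nu_-,\nu_1$ and the contact speed $s$ are untouched by the Newton correction. The parameters of \eqref{eq:KS-parameter-vector} involve only $\alpha_3,\beta_3,\delta_3,\rho_3,\nu_+,\nu_2$, so these quantities keep their rational values. The inequality $\nu_-<\nu_1<s$ therefore holds at $x^\ast$ exactly as at $\widehat x$, where it can be read off from Table \ref{tab:KS-rational-data}.

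For the first new inequality,
\[
  \nu_2^\ast-s\ \ge\ \widehat\nu_2-s-\delta_{\mathrm{KS}}\ >\ 0,
\]
using the first entry of the minimum in \eqref{eq:KS-ordering-condition}. This entry is positive, since $\widehat s\approx-14.16<\widehat\nu_2\approx-8.63$. For the second,
\[
  \nu_+^\ast-\nu_2^\ast\ \ge\ (\widehat\nu_+-\widehat\nu_2)-2\delta_{\mathrm{KS}}\ >\ 0,
\]
by the second entry, which explains the factor $\tfrac12$: both endpoints may move toward each other. Together these give $\nu_-<\nu_1<s<\nu_2^\ast<\nu_+^\ast$.

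Next I would argue that the algebraic formulas remain valid. Proposition \ref{prop:KS-action-coefficient} is the specialization of Proposition \ref{prop:fan-contact-coefficients} with $k=2$. That specialization requires only the ordering $\nu_-<\nu_1<s<\nu_2<\nu_+$, which is what determines the overlap weights $\omega_{ij}$ of Proposition \ref{prop:discrete-overlap-formulas}. With the ordering preserved, the overlap pattern is the same, and the $\omega_{ij}$ are the same affine expressions in the speeds. Hence \eqref{eq:three-region-entropy-coefficient} and \eqref{eq:three-region-action-coefficient} hold at $x^\ast$, with $\rho_3,C_3,e_3$ replaced by their exact values; $C_3$ and $e_3$ are either fixed or determined by the corrected parameters, as in Proposition \ref{prop:smooth-parameter-dependence}.

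The main point to handle carefully is that $x^\ast$ must actually be a fan subsolution, so that Theorem \ref{thm:finite-dimensional-energy} and Theorem \ref{thm:finite-dimensional-action} apply. This is supplied by the cited construction (Theorem \ref{thm:deltacert}), so the lemma itself reduces to the ordering argument above. The transfer of the bound $\Dcoef_{\mathrm{KS}}>6609$ is not part of this lemma; it would follow afterwards via the $C^1$ dependence of Proposition \ref{prop:smooth-parameter-dependence} on this fixed-ordering neighborhood.
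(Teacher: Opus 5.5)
Your argument is correct and is essentially the paper's proof. With $\nu_-,\nu_1,s$ untouched by the correction, the bounds $|\nu_2^\ast-\widehat\nu_2|\le\delta_{\mathrm{KS}}$ and $|\nu_+^\ast-\widehat\nu_+|\le\delta_{\mathrm{KS}}$ give $\nu_2^\ast>s$ from the first entry of the minimum, and $\nu_+^\ast-\nu_2^\ast\ge\widehat\nu_+-\widehat\nu_2-2\delta_{\mathrm{KS}}>0$ from the second. Your remark that the preserved ordering fixes the overlap weights $\omega_{ij}$ in Proposition \ref{prop:KS-action-coefficient} makes explicit what the paper leaves implicit for the lemma's second clause.
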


\begin{proof}
Since $|\nu_2^\ast-\widehat\nu_2|\leq\delta_{\mathrm{KS}}$, the first condition in \eqref{eq:KS-ordering-condition} gives $\nu_2^\ast>s$. Moreover,
\begin{equation*}
  \nu_+^\ast-\nu_2^\ast  \geq  \widehat\nu_+-\widehat\nu_2  -|\nu_+^\ast-\widehat\nu_+| -|\nu_2^\ast-\widehat\nu_2|  \geq  \widehat\nu_+-\widehat\nu_2-2\delta_{\mathrm{KS}},
\end{equation*}
which is positive by the second condition.
\end{proof}
For the data of Table \ref{tab:KS-rational-data}, the right hand side of \eqref{eq:KS-ordering-condition} equals $\widehat\nu_2-s=5.5294\ldots$.\\

Among the corrected variables, only $\rho_3$, $\nu_+$ and $\nu_2$ enter the action and entropy rate coefficients. Assume that the
thermodynamic interpolation used in the exact construction preserves $  \eint(\rho_3^\ast)=\widehat e_3$ and $C_3^\ast=\widehat C_3$.
Set $q_3:=\frac{\widehat C_3}{2}-\widehat e_3$,
so that the third fan Lagrangian density is $\Lambda_3^{\mathrm{KS}}(\rho_3)=q_3\rho_3$. Writing $\rho=\rho_3$, $\nu=\nu_+$, $\widetilde\nu=\nu_2$, the action coefficient becomes
\begin{align}
  \Dcoef_{\mathrm{KS}}(\rho,\nu,\widetilde\nu)
  =&(\widehat\nu_1-\widehat\nu_-)  \bigl(\Lambda_-^{\mathrm c}-\widehat\Lambda_1^{\mathrm{KS}}\bigr)  +(s-\widehat\nu_1)      \bigl(\Lambda_-^{\mathrm c}-\widehat\Lambda_2^{\mathrm{KS}}\bigr)   \notag\\
  &+(\widetilde\nu-s)      \bigl(\Lambda_+^{\mathrm c}-\widehat\Lambda_2^{\mathrm{KS}}\bigr) +(\nu-\widetilde\nu) \bigl(\Lambda_+^{\mathrm c}-q_3\rho\bigr), \label{eq:KS-variable-action-coefficient}
\end{align}
with nonzero derivatives
\begin{align}
  \partial_\rho\Dcoef_{\mathrm{KS}}  =-(\nu-\widetilde\nu)\,q_3,\ \  \partial_\nu\Dcoef_{\mathrm{KS}}  =\Lambda_+^{\mathrm c}-q_3\rho,\ \
  \partial_{\widetilde\nu}\Dcoef_{\mathrm{KS}}  =q_3\rho-\widehat\Lambda_2^{\mathrm{KS}}. \label{eq:KS-D-rho-derivative}
\end{align}

The next result transfers the certified lower bound $\Dcoef_{\mathrm{KS}}(\hat x)>6609$ to the exact fan.
\begin{proposition} \label{prop:KS-conditional-action}
Let $U$ be a convex neighbourhood containing the segment from
$\widehat x$ to $x^\ast$, and suppose $ \sup_{x\in U}|\nabla\Dcoef_{\mathrm{KS}}(x)|\leq L_D$. If
\begin{equation}\label{eq:KS-D-delta-condition}
  \delta_{\mathrm{KS}}<\frac{6609}{L_D},
\end{equation}
then  $\Dcoef_{\mathrm{KS}}(x^\ast)\geq \Dcoef_{\mathrm{KS}}(\hat x)-L_D\,\delta_{\mathrm{KS}}>0$, and the exact KS fan has smaller action than the classical contact discontinuity. 
\end{proposition}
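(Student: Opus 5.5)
The plan is a mean value argument along the segment from $\widehat x$ to $x^\ast$, combined with the certified lower bound \eqref{eq:KS-rational-sign-bounds}. First, the closed-form expression \eqref{eq:KS-variable-action-coefficient} must be the correct formula for $\Dcoef_{\mathrm{KS}}$ at every point of the segment, not only at $\widehat x$. Under the standing assumption that the wave ordering and overlap pattern do not change in $U$ (as in Proposition~\ref{prop:smooth-parameter-dependence}), or alternatively under \eqref{eq:KS-ordering-condition} via Lemma~\ref{lem:KS-ordering-stability}, this holds for every $x$ on the segment. The segment is then contained in $U$, on which the ordering $\nu_1<s<\nu_2<\nu_+$ holds.

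With the ordering fixed, $\Dcoef_{\mathrm{KS}}$ is a polynomial in $(\rho_3,\nu_+,\nu_2)$ and does not depend on the other three components of $x$. Hence it is $C^1$ on $U$, with gradient given by \eqref{eq:KS-D-rho-derivative}. Set $g(\tau):=\Dcoef_{\mathrm{KS}}(\widehat x+\tau(x^\ast-\widehat x))$ for $\tau\in[0,1]$. By convexity of $U$ the segment lies in $U$, and the fundamental theorem of calculus together with Cauchy--Schwarz gives
\[
|\Dcoef_{\mathrm{KS}}(x^\ast)-\Dcoef_{\mathrm{KS}}(\widehat x)|\le \int_0^1|\nabla\Dcoef_{\mathrm{KS}}(\widehat x+\tau(x^\ast-\widehat x))|\,|x^\ast-\widehat x|\,d\tau\le L_D\,\delta_{\mathrm{KS}}.
\]
It follows that $\Dcoef_{\mathrm{KS}}(x^\ast)\ge \Dcoef_{\mathrm{KS}}(\widehat x)-L_D\delta_{\mathrm{KS}}$. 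Since $\Dcoef_{\mathrm{KS}}(\widehat x)>6609$ by \eqref{eq:KS-rational-sign-bounds}, and $L_D\delta_{\mathrm{KS}}<6609$ by \eqref{eq:KS-D-delta-condition}, we get $\Dcoef_{\mathrm{KS}}(x^\ast)>0$.

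Finally, $x^\ast$ is an exact admissible fan subsolution. Proposition~\ref{prop:KS-action-coefficient} together with Theorem~\ref{thm:finite-dimensional-action} then gives $\A_{\mathrm c}(t)-\A_{\mathrm{KS}}(t)=\frac{\ell t^2}{2}\Dcoef_{\mathrm{KS}}(x^\ast)>0$ for all $t>0$, so every realization has strictly smaller action than the contact. By Proposition~\ref{prop:rigidity-fixed-fan} this conclusion does not depend on the chosen realization.

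The main obstacle is justifying that the formula remains valid on all of $U$, that is, that no wave crossing occurs along the segment. If this is not part of the hypothesis on $U$, I would add \eqref{eq:KS-ordering-condition} and apply Lemma~\ref{lem:KS-ordering-stability} pointwise along the segment. This works because every point of the segment lies within distance $\delta_{\mathrm{KS}}$ of $\widehat x$. I also need the thermodynamic assumptions $\eint(\rho_3)=\widehat e_3$ and $C_3=\widehat C_3$, so that $\Lambda_3$ is linear in $\rho_3$; these are used implicitly.
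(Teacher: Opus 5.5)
Your proof is correct and follows the same route as the paper: a Lipschitz (mean value) estimate along the segment from $\widehat x$ to $x^\ast$ using the gradient bound $L_D$, combined with the certified bound $\Dcoef_{\mathrm{KS}}(\widehat x)>6609$ and the hypothesis $L_D\delta_{\mathrm{KS}}<6609$. Your remarks on the preserved wave ordering and on the normalization $\eint(\rho_3^\ast)=\widehat e_3$, $C_3^\ast=\widehat C_3$ spell out assumptions that the paper handles separately, through Lemma~\ref{lem:KS-ordering-stability} and the standing assumption stated before \eqref{eq:KS-variable-action-coefficient}.
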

\begin{proof}
From \eqref{eq:KS-rational-sign-bounds}, $\Dcoef_{\mathrm{KS}}(\widehat x)>6609$. The Lipschitz continuity of $x\mapsto \Dcoef_{\mathrm{KS}}(x)$  implies
$\Dcoef_{\mathrm{KS}}(x^\ast) \geq\Dcoef_{\mathrm{KS}}(\widehat x)-L_D\delta_{\mathrm{KS}}$. Inequality \eqref{eq:KS-D-delta-condition} assures that the right hand side is $>0$.
\end{proof}

The certified bound $L_D\leq 53\,890$, established in Subsection \ref{subsec:certification} on the box $\overline B_\infty(\widehat x,1/32)$, reduces the sufficient condition to $  \delta_{\mathrm{KS}} <\frac{6609}{53\,890}  \approx 0.12264$.\\

Analogous arguments apply to $\Ecoef_{\mathrm{KS}}$. Set
$  r_3:=\frac{\widehat C_3}{2}+\widehat e_3$,
so that $H_3^{\mathrm{KS}}(\rho_3)=r_3\rho_3$. In the same variables,
\begin{align}
  \Ecoef_{\mathrm{KS}}(\rho,\nu,\widetilde\nu)
  =&(\widehat\nu_1-\widehat\nu_-)      \bigl(\widehat H_1^{\mathrm{KS}}-H_-^{\mathrm c}\bigr)     +(s-\widehat\nu_1)      \bigl(\widehat H_2^{\mathrm{KS}}-H_-^{\mathrm c}\bigr)  \notag\\
  &+(\widetilde\nu-s)      \bigl(\widehat H_2^{\mathrm{KS}}-H_+^{\mathrm c}\bigr)     +(\nu-\widetilde\nu)      \bigl(r_3\rho-H_+^{\mathrm c}\bigr),
  \label{eq:KS-variable-entropy-coefficient}
\end{align}
with nonzero derivatives
\begin{align}
  \partial_\rho\Ecoef_{\mathrm{KS}}  =(\nu-\widetilde\nu)\,r_3,\ \ 
  \partial_\nu\Ecoef_{\mathrm{KS}}  =r_3\rho-H_+^{\mathrm c},\ \ 
  \partial_{\widetilde\nu}\Ecoef_{\mathrm{KS}}  =\widehat H_2^{\mathrm{KS}}-r_3\rho. \label{eq:KS-E-rho-derivative}
\end{align}
The following result transfers the certified lower bound $\Ecoef_{\mathrm{KS}}(\hat x)<-1407$ to the exact fan.
\begin{proposition}
\label{prop:KS-conditional-entropy}
Let $U$ contain the segment from $\widehat x$ to $x^\ast$, and suppose $ \sup_{x\in U}|\nabla\Ecoef_{\mathrm{KS}}(x)|\leq L_E$.
If
\begin{equation}\label{eq:KS-E-delta-condition}
  \delta_{\mathrm{KS}}<\frac{1407}{L_E},
\end{equation}
then $\Ecoef_{\mathrm{KS}}(x^\ast)<0$, and, the exact KS fan has a smaller total-energy rate than the classical contact discontinuity.
\end{proposition}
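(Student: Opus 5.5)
The argument is the mirror image of the proof of Proposition \ref{prop:KS-conditional-action}, with $\Ecoef_{\mathrm{KS}}$ in place of $\Dcoef_{\mathrm{KS}}$ and the upper bound from \eqref{eq:KS-rational-sign-bounds} in place of the lower bound.

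First, I would make sure that the algebraic expression \eqref{eq:KS-variable-entropy-coefficient} is the correct formula for $\Ecoef_{\mathrm{KS}}$ along the whole segment from $\widehat x$ to $x^\ast$. This needs the wave ordering $\nu_1<s<\nu_2<\nu_+$ to be preserved, which is guaranteed by Lemma \ref{lem:KS-ordering-stability} under \eqref{eq:KS-ordering-condition}. I would either assume that condition tacitly, as in the action case, or note that it is implied in practice by the certified size of $\delta_{\mathrm{KS}}$. On the region where the ordering holds, \eqref{eq:KS-variable-entropy-coefficient} is a polynomial in $(\rho,\nu,\widetilde\nu)=(\rho_3,\nu_+,\nu_2)$, and it does not depend on the other corrected variables $\alpha_3,\beta_3,\delta_3$. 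Hence $\Ecoef_{\mathrm{KS}}$ is $C^1$ on $U$, with gradient given by \eqref{eq:KS-E-rho-derivative}.

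Next, I would apply the mean value theorem along the segment $x(\tau)=\widehat x+\tau(x^\ast-\widehat x)$, $\tau\in[0,1]$, which lies in $U$ by assumption. This gives
\[
\Ecoef_{\mathrm{KS}}(x^\ast)=\Ecoef_{\mathrm{KS}}(\widehat x)+\int_0^1\nabla\Ecoef_{\mathrm{KS}}(x(\tau))\cdot(x^\ast-\widehat x)\,d\tau\le \Ecoef_{\mathrm{KS}}(\widehat x)+L_E\,\delta_{\mathrm{KS}}.
\]
Combining this with the certified bound $\Ecoef_{\mathrm{KS}}(\widehat x)<-1407$ from \eqref{eq:KS-rational-sign-bounds} and the hypothesis \eqref{eq:KS-E-delta-condition} yields $\Ecoef_{\mathrm{KS}}(x^\ast)<-1407+L_E\delta_{\mathrm{KS}}<0$.

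Finally, I would invoke Theorem \ref{thm:finite-dimensional-energy}, through Proposition \ref{prop:KS-action-coefficient} evaluated at $x^\ast$. It gives $\Energy_{\mathrm{KS}}(t)-\Energy_{\mathrm c}(t)=\ell t\,\Ecoef_{\mathrm{KS}}(x^\ast)$, so the energy rate of the exact KS fan is strictly smaller than that of the contact discontinuity. By Proposition \ref{prop:rigidity-fixed-fan}, this holds for every convex integration realization of the fan.

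The only delicate point is the validity of the formula along the segment: the ordering and the thermodynamic assumptions $\eint(\rho_3^\ast)=\widehat e_3$ and $C_3^\ast=\widehat C_3$ must hold there. Beyond that, the proof is a one-line Lipschitz estimate.
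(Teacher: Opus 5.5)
Your proposal is correct and follows the paper's own route. The paper gives no separate proof here; it says the argument for Proposition \ref{prop:KS-conditional-action} applies analogously. That argument is exactly your Lipschitz estimate $\Ecoef_{\mathrm{KS}}(x^\ast)\le\Ecoef_{\mathrm{KS}}(\widehat x)+L_E\,\delta_{\mathrm{KS}}$ along the segment, combined with the certified bound $\widehat{\Ecoef}_{\mathrm{KS}}<-1407$, followed by Theorem \ref{thm:finite-dimensional-energy} for the energy-rate conclusion. Your extra care about preserving the wave ordering (via Lemma \ref{lem:KS-ordering-stability}) and about the thermodynamic normalization matches the standing assumptions of Subsection \ref{sec:KS-geometry}.
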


The certified bound $L_E\leq 49\,688$ from Subsection \ref{subsec:certification}, on the same box, gives the sufficient condition $  \delta_{\mathrm{KS}}  <\frac{1407}{49\,688} \approx 0.02832$.

Summarizing Propositions \ref{prop:KS-conditional-action} and \ref{prop:KS-conditional-entropy}, we conclude that
\begin{theorem} \label{thm:KS-conditional-exact-selection}
Assume that
\begin{enumerate}[label=\textup{(\roman*)}]
  \item the correction satisfies the ordering condition
  \eqref{eq:KS-ordering-condition} of
  Lemma \ref{lem:KS-ordering-stability},
  \item $\sup_U|\nabla\Dcoef_{\mathrm{KS}}|\leq L_D$,
  \item $\sup_U|\nabla\Ecoef_{\mathrm{KS}}|\leq L_E$,
  \item
  \begin{equation}\label{eq:KS-combined-delta-condition}
    \delta_{\mathrm{KS}}  <\min\left\{  \frac{6609}{L_D},\, \frac{1407}{L_E}   \right\}.
  \end{equation}
\end{enumerate}
Then, for the exact KS fan,
\begin{equation}
  \Dcoef_{\mathrm{KS}}(x^\ast)>0   \qquad\text{and}\qquad  \Ecoef_{\mathrm{KS}}(x^\ast)<0.   \label{eq:KS-exact-signs-conditional}
\end{equation}
Hence both the action rate and the entropy rate criteria prefer the exact KS convex integration family  to the classical contact
discontinuity.
\end{theorem}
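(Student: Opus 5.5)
The plan is to obtain the theorem by combining Lemma \ref{lem:KS-ordering-stability} with Propositions \ref{prop:KS-conditional-action} and \ref{prop:KS-conditional-entropy}, checking that their hypotheses are exactly (i)--(iv).

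First, use assumption (i) and Lemma \ref{lem:KS-ordering-stability}. This shows that the exact speeds keep the ordering $\nu_1<s<\nu_2^\ast<\nu_+^\ast$. Hence the overlap pattern of Proposition \ref{prop:fan-contact-coefficients} is the same at $\widehat x$ and at $x^\ast$. The same holds at every point of the segment $[\widehat x,x^\ast]$, since $\nu_2$ and $\nu_+$ move monotonically along it and the bounds in the lemma's proof apply verbatim with $\delta_{\mathrm{KS}}$ replaced by $|x-\widehat x|\le\delta_{\mathrm{KS}}$.

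Consequently, along the whole segment $\Dcoef_{\mathrm{KS}}$ and $\Ecoef_{\mathrm{KS}}$ are given by the single polynomial expressions \eqref{eq:KS-variable-action-coefficient} and \eqref{eq:KS-variable-entropy-coefficient} in $(\rho_3,\nu_+,\nu_2)$. They do not depend on the other corrected variables. This uses the standing assumption that $\eint(\rho_3^\ast)=\widehat e_3$ and $C_3^\ast=\widehat C_3$, so that $\Lambda_3^{\mathrm{KS}}=q_3\rho_3$ and $H_3^{\mathrm{KS}}=r_3\rho_3$. In particular both coefficients are $C^1$ there, as in Proposition \ref{prop:smooth-parameter-dependence}, and the mean value inequality applies on the convex set $U$.

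Next, take $U$ to be a convex neighbourhood containing the segment on which (ii) and (iii) hold. Assumption (iv) gives $\delta_{\mathrm{KS}}<6609/L_D$. Proposition \ref{prop:KS-conditional-action}, together with the certified bound $\widehat\Dcoef_{\mathrm{KS}}>6609$ from \eqref{eq:KS-rational-sign-bounds}, then yields
\[\Dcoef_{\mathrm{KS}}(x^\ast)\ge\widehat\Dcoef_{\mathrm{KS}}-L_D\delta_{\mathrm{KS}}>0.\]
Likewise, (iv) gives $\delta_{\mathrm{KS}}<1407/L_E$. Proposition \ref{prop:KS-conditional-entropy}, with $\widehat\Ecoef_{\mathrm{KS}}<-1407$, yields
\[\Ecoef_{\mathrm{KS}}(x^\ast)\le\widehat\Ecoef_{\mathrm{KS}}+L_E\delta_{\mathrm{KS}}<0.\]
Its proof was only stated, so I would write it out as the mirror image of the action case.

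Finally, the preference statement follows from the exact-fan versions of Theorems \ref{thm:finite-dimensional-action} and \ref{thm:finite-dimensional-energy}, which are valid by the preserved ordering. Proposition \ref{prop:rigidity-fixed-fan} extends the conclusion to every convex integration realization generated by the exact fan.

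The only step needing care is the first one: the Lipschitz estimates are meaningful only if the algebraic formulas hold on the entire segment, not just at its endpoints. The monotone-interpolation argument for the ordering handles this. Everything else is direct bookkeeping.
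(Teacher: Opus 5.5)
Your proposal is correct and is essentially the paper's argument: the theorem combines Lemma \ref{lem:KS-ordering-stability} (which keeps the overlap formulas valid at $x^\ast$) with the Lipschitz estimates of Propositions \ref{prop:KS-conditional-action} and \ref{prop:KS-conditional-entropy}, together with the certified bounds $\widehat\Dcoef_{\mathrm{KS}}>6609$ and $\widehat\Ecoef_{\mathrm{KS}}<-1407$. Your extra check of the ordering along the whole segment is harmless but not strictly needed: the mean value inequality is applied to the polynomial expressions \eqref{eq:KS-variable-action-coefficient} and \eqref{eq:KS-variable-entropy-coefficient}, and the ordering only serves to identify their values at $x^\ast$ with the true coefficients.
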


Numerically, we obtain the certified estimate $\delta_{\mathrm{KS}}\leq 2.45\cdot 10^{-13}$, and \eqref{eq:KS-combined-delta-condition} is satisfied with a margin greater than $10^{10}$. In view of Theorem \ref{thm:contact-agreement}, the signs \eqref{eq:KS-exact-signs-conditional} are known in advance. The content of Theorem~\ref{thm:KS-conditional-exact-selection} and of the certified constants of Subsection \ref{subsec:certification} is that the coefficients at the exact fan essentially coincide with the certified rational values.\\

We finally quantify the margin with which the two criteria agree at the exact fan. Suppose
\begin{equation*}
  |\Kcoef_{\mathrm{KS}}(x^\ast)-\Kcoef_{\mathrm{KS}}(\widehat x)|  \leq L_K\delta_{\mathrm{KS}},
  \qquad  |\Icoef_{\mathrm{KS}}(x^\ast)-\Icoef_{\mathrm{KS}}(\widehat x)| \leq L_I\delta_{\mathrm{KS}},
\end{equation*}
and let the agreement margin at the rational vector be defined by
\begin{equation}
  m_{\mathrm{KS}}  :=\frac{\widehat{\Kcoef}_{\mathrm{KS}}}{2} -\bigl|\widehat{\Icoef}_{\mathrm{KS}}\bigr|  =1407.036311937891\ldots.  \label{eq:KS-agreement-margin}
\end{equation}

\begin{proposition}\label{prop:KS-agreement-stability}
If
\begin{equation}\label{eq:KS-agreement-stability-condition}
  \frac{L_K}{2}\,\delta_{\mathrm{KS}} +L_I\,\delta_{\mathrm{KS}}  <m_{\mathrm{KS}}
\end{equation}
and $\Kcoef_{\mathrm{KS}}(x^\ast)>0$, then
\begin{equation} \label{eq:KS-exact-agreement-condition}
  \bigl|\Icoef_{\mathrm{KS}}(x^\ast)\bigr|
  <\frac{\Kcoef_{\mathrm{KS}}(x^\ast)}{2}.
\end{equation}
Hence the two criteria continue to agree at the exact fan.
\end{proposition}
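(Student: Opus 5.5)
The proof is a triangle-inequality perturbation argument built on the Lipschitz-type bounds for $\Kcoef_{\mathrm{KS}}$ and $\Icoef_{\mathrm{KS}}$. We write $K^\ast:=\Kcoef_{\mathrm{KS}}(x^\ast)$, $I^\ast:=\Icoef_{\mathrm{KS}}(x^\ast)$, and use the hatted notation for the values at $\widehat x$. The goal is to show $K^\ast/2-|I^\ast|>0$, i.e.\ that the agreement margin at the exact fan remains positive.

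First, by the reverse triangle inequality, $|I^\ast|\le|\widehat{\Icoef}_{\mathrm{KS}}|+|I^\ast-\widehat{\Icoef}_{\mathrm{KS}}|\le|\widehat{\Icoef}_{\mathrm{KS}}|+L_I\delta_{\mathrm{KS}}$. Second, from the assumed bound on $\Kcoef$, $K^\ast\ge\widehat{\Kcoef}_{\mathrm{KS}}-L_K\delta_{\mathrm{KS}}$, so $K^\ast/2\ge\widehat{\Kcoef}_{\mathrm{KS}}/2-\tfrac{L_K}{2}\delta_{\mathrm{KS}}$. Subtracting these gives
\[
\frac{K^\ast}{2}-|I^\ast|\;\ge\;\frac{\widehat{\Kcoef}_{\mathrm{KS}}}{2}-\bigl|\widehat{\Icoef}_{\mathrm{KS}}\bigr|-\Bigl(\frac{L_K}{2}+L_I\Bigr)\delta_{\mathrm{KS}}
= m_{\mathrm{KS}}-\Bigl(\frac{L_K}{2}+L_I\Bigr)\delta_{\mathrm{KS}},
\]
which is strictly positive by \eqref{eq:KS-agreement-stability-condition}. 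This is exactly \eqref{eq:KS-exact-agreement-condition}.

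The hypothesis $K^\ast>0$ together with \eqref{eq:KS-exact-agreement-condition} places the exact fan in the configuration of Corollary~\ref{cor:agreement-condition}(i). Hence both criteria strictly prefer the fan, and therefore agree. For the formulas defining $\Kcoef_{\mathrm{KS}}$, $\Icoef_{\mathrm{KS}}$ to be the correct overlap expressions at $x^\ast$, the wave ordering must be preserved; this is supplied by Lemma~\ref{lem:KS-ordering-stability}. In fact $K^\ast>0$ is implied by the margin condition itself, since $K^\ast/2>|I^\ast|\ge0$, but it is harmless to keep it as stated.

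There is no real obstacle here. The only care needed is that the Lipschitz bounds compare values computed with the same algebraic expressions, which is exactly what the ordering lemma guarantees.
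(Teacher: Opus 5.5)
Your proof is correct and is essentially the paper's argument: the paper likewise bounds $\frac{\Kcoef_{\mathrm{KS}}(x^\ast)}{2}-|\Icoef_{\mathrm{KS}}(x^\ast)|$ from below by $m_{\mathrm{KS}}-\frac12|\Kcoef_{\mathrm{KS}}(x^\ast)-\widehat{\Kcoef}_{\mathrm{KS}}|-|\Icoef_{\mathrm{KS}}(x^\ast)-\widehat{\Icoef}_{\mathrm{KS}}|$ and concludes by \eqref{eq:KS-agreement-stability-condition}. Two small remarks: the step $|I^\ast|\le|\widehat{\Icoef}_{\mathrm{KS}}|+|I^\ast-\widehat{\Icoef}_{\mathrm{KS}}|$ is the ordinary (not reverse) triangle inequality, and your observation that $\Kcoef_{\mathrm{KS}}(x^\ast)>0$ is already implied by the conclusion is correct.
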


\begin{proof}
We have
\begin{align*}
  \frac{\Kcoef_{\mathrm{KS}}(x^\ast)}{2} -\bigl|\Icoef_{\mathrm{KS}}(x^\ast)\bigr|
  \geq&  \frac{\widehat{\Kcoef}_{\mathrm{KS}}}{2} -\bigl|\widehat{\Icoef}_{\mathrm{KS}}\bigr|
  -\frac12\bigl|    \Kcoef_{\mathrm{KS}}(x^\ast)-\widehat{\Kcoef}_{\mathrm{KS}}
  \bigr|  -\bigl| \Icoef_{\mathrm{KS}}(x^\ast)-\widehat{\Icoef}_{\mathrm{KS}}
  \bigr|.
\end{align*}
The right hand side is positive by \eqref{eq:KS-agreement-stability-condition}.
\end{proof}
Since $\Kcoef=\Dcoef-\Ecoef$ and $\Icoef=(\Dcoef+\Ecoef)/2$ by Theorem \ref{thm:kinetic-internal-decomposition}, one may take $L_K\leq L_D+L_E$ and $L_I\leq(L_D+L_E)/2$. With the certified constants and correction bound of Subsection \ref{subsec:certification}, condition \eqref{eq:KS-agreement-stability-condition} then holds with a margin greater than $10^{10}$.\\

\noindent \textbf{Exact evaluation for the KS parameters.} Appendix A certifies exact values close to those given in \eqref{eq:KS-rational-KI-values}, \eqref{eq:KS-rational-values}. In Theorem \ref{thm:unconditional} we show:
$$\Dcoef_{\mathrm{KS}}(x^*)\;>\;6609-2\cdot 10^{-8}\;>\;0,\qquad \Ecoef_{\mathrm{KS}}(x^*)\;<\;-1407+2\cdot 10^{-8}\;<\;0 .$$
Also, $\Kcoef_{\mathrm{KS}}(x^*) \simeq 8016.7142$  and $\Icoef_{\mathrm{KS}}(x^*) \simeq 2601.3208$. Both pairwise criteria prefer the exact Krupa-Sz\'{e}kelyhidi convex integration family to the classical contact discontinuity, with the certified margins above.

\section{Horimoto's contact discontinuity}\label{sec:Horimoto}

Horimoto \cite{KrupaSzekelyhidi} has established non-uniqueness for symmetric data corresponding to a contact discontinuity for every pressure law  $0<  p\in C^1(0,\infty)$, with $p'(\rho)>0$ for all $\rho>0$, using the refined convex-integration framework of Markfelder
\cite[Theorem~1.3]{Horimoto2026}.  In particular, unlike the construction from \cite{KrupaSzekelyhidi}, the existence of the relaxed fan does not rely on a specially designed pressure law.  We show that, at the relaxed level, both the action rate and entropy rate citeria prefer Horimoto's fan for every such pressure law.

Let us write $P(\rho):=\rho\eint(\rho)$ for the pressure potential. Then
$P(\rho)  = \rho\int_{\rho_\ast}^{\rho} \frac{p(r)}{r^2}\,dr$,
and $P$ is strictly convex,
\begin{equation}
  P''(\rho)=\frac{p'(\rho)}{\rho}>0.
  \label{eq:Horimoto-strict-convexity}
\end{equation}
Horimoto considers the stationary contact data
\begin{equation*}
  \rho_-=\rho_+=\rho_0,
  \qquad
  m_-=  \begin{pmatrix}    -\rho_0u_0\\0 \end{pmatrix},
  \qquad  m_+=  \begin{pmatrix}   \rho_0u_0\\0  \end{pmatrix},
\end{equation*}
where $\rho_0>0$ and $u_0\neq0$.  The  self-similar reference solution is the stationary contact $y=0$. The relaxed fan has interface speeds $-b<-a<a<b$,  $0<a<b$, and symmetric intermediate densities $ \rho_1=\rho_3  =  \rho_0+\frac{a\varepsilon}{b-a}$, $\rho_2=\rho_0-\varepsilon$,
where $0<\varepsilon<\rho_0$.  In particular,
\begin{equation}
  (b-a)\rho_1+a\rho_2=b\rho_0.
  \label{eq:Horimoto-mass-identity}
\end{equation}

In Horimoto's relaxed formulation, $q_i-p(\rho_i)$ plays the role of the kinetic energy density.  Thus, in the notation
of Sections \ref{sec:finite-dimensional-formulas} and \ref{sec:comparisonstability}, $\frac12\rho_iC_i$ corresponds to $q_i-p(\rho_i)$. Under this identification, the formulas for entropy and action rate coefficients and the kinetic/internal energy decomposition apply verbatim.
We hence define 
\begin{align}
  \overline H_i  :=   q_i+P(\rho_i)-p(\rho_i),  \qquad   \overline\Lambda_i   := q_i-p(\rho_i)-P(\rho_i) \qquad (i=1,2).  \label{eq:Horimoto-relaxed-Lagrangian}
\end{align}
By symmetry the third state has the same densities as the first state.

Since for the classical contact discontinuity, 
$$  H_0   := \frac12\rho_0u_0^2+P(\rho_0),  \qquad
  \Lambda_0  :=   \frac12\rho_0u_0^2-P(\rho_0), $$
the relative entropy rate and  action rate coefficients are given by
\begin{equation*}
  \overline\Ecoef_{\mathrm H}  :=   2(b-a)(\overline H_1-H_0)  +  2a(\overline H_2-H_0),\quad
  \overline\Dcoef_{\mathrm H}  := 2(b-a)(\Lambda_0-\overline\Lambda_1)  +  2a(\Lambda_0-\overline\Lambda_2).
\end{equation*}
The corresponding kinetic and internal energy mismatches are
\begin{align*}
  \overline\Kcoef_{\mathrm H}  &:=  2(b-a)  \left[   \rho_0u_0^2 -    2\bigl(q_1-p(\rho_1)\bigr)  \right]+  2a  \left[  \rho_0u_0^2   -    2\bigl(q_2 -p(\rho_2)\bigr)\right],
\\ \Icoef_{\mathrm H} &:= 2(b-a)  \bigl( P(\rho_1)-P(\rho_0)  \bigr) +2a\bigl(  P(\rho_2)-P(\rho_0)  \bigr).
\end{align*}
Use of Theorem \ref{thm:kinetic-internal-decomposition},
\begin{equation} \label{eq:Horimoto-relaxed-KI}
  \overline\Dcoef_{\mathrm H}   =  \frac12\overline\Kcoef_{\mathrm H}  +  \Icoef_{\mathrm H},  \qquad   \overline\Ecoef_{\mathrm H}  =  -\frac12\overline\Kcoef_{\mathrm H} +  \Icoef_{\mathrm H}.
\end{equation}
We now show the location of the relaxed Horimoto fan.
\begin{proposition} \label{prop:Horimoto-relaxed-selection}
For the relaxed fan constructed by Horimoto,
\begin{equation}
  \overline\Kcoef_{\mathrm H}>0,  \qquad  0<\Icoef_{\mathrm H}  <  \frac{\overline\Kcoef_{\mathrm H}}{2}.
  \label{eq:Horimoto-agreement-sector}
\end{equation}
Consequently,  $\overline\Dcoef_{\mathrm H}>0$ and $\overline\Ecoef_{\mathrm H}<0$.
\end{proposition}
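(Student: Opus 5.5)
The plan mirrors the proof of Theorem \ref{thm:contact-agreement}, applied to the relaxed Horimoto fan. The inequality $\overline\Ecoef_{\mathrm H}<0$ will come from the relaxed interface entropy inequalities. The inequality $\Icoef_{\mathrm H}>0$ will come from Jensen's inequality, combined with the mass identity \eqref{eq:Horimoto-mass-identity}. The remaining claims then follow algebraically from the decomposition \eqref{eq:Horimoto-relaxed-KI}, which gives
\[
\overline\Kcoef_{\mathrm H}=2(\Icoef_{\mathrm H}-\overline\Ecoef_{\mathrm H}),\qquad \overline\Dcoef_{\mathrm H}=2\Icoef_{\mathrm H}-\overline\Ecoef_{\mathrm H}.
\]
Hence $\Icoef_{\mathrm H}>0$ and $\overline\Ecoef_{\mathrm H}<0$ together yield $\overline\Kcoef_{\mathrm H}>2\Icoef_{\mathrm H}>0$ and $\overline\Dcoef_{\mathrm H}>0$, which is exactly \eqref{eq:Horimoto-agreement-sector}.

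\textbf{Internal energy.} Since $\rho_1=\rho_3$ and the fan is symmetric, the weights are $b-a$, $2a$, $b-a$, with total $2b$. By \eqref{eq:Horimoto-mass-identity},
\[
\frac{(b-a)\rho_1+2a\rho_2+(b-a)\rho_3}{2b}=\rho_0 ,
\]
so $\rho_0$ is a convex combination of $\rho_1$ and $\rho_2$ with weights $(b-a)/b$ and $a/b$. Strict convexity \eqref{eq:Horimoto-strict-convexity} and Jensen's inequality give
\[
(b-a)P(\rho_1)+aP(\rho_2)>bP(\rho_0),
\]
with strictness because $\rho_2=\rho_0-\varepsilon\neq\rho_0$ and both weights are positive ($0<a<b$). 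Multiplying by $2$ gives $\Icoef_{\mathrm H}>0$. This is the mass-summation argument of Theorem \ref{thm:contact-agreement}, made explicit here.

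\textbf{Energy rate.} Writing $\overline\Ecoef_{\mathrm H}=\int(\overline H^{\mathrm{fan}}-H^{\mathrm c})\,d\xi$ and integrating by parts as in \eqref{eq:contact-E-jump-formula}, with $s=0$, gives
\[
\overline\Ecoef_{\mathrm H}=-\sum_r \nu_r[\overline H]_r .
\]
The exterior normal velocities vanish, so $\mathcal F_\pm=0$ and the telescoping sum $\sum_r[\mathcal F]_r=\mathcal F_+-\mathcal F_-$ is zero. Therefore
\[
\overline\Ecoef_{\mathrm H}=\sum_r\bigl([\mathcal F]_r-\nu_r[\overline H]_r\bigr),
\]
where $\mathcal F_i$ is replaced by Horimoto's prescribed normal relaxed entropy flux, as in the remark following Corollary \ref{cor:no-contact-disagreement}. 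Each summand is $\le0$ by the relaxed admissibility inequalities. To obtain $\overline\Ecoef_{\mathrm H}<0$, at least one of these must be strict.

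\textbf{Main obstacle.} The hardest step is confirming from Horimoto's construction that at least one interface entropy inequality is strict, for example at $\pm b$. I also need to confirm that his relaxed energy $q_i+P(\rho_i)-p(\rho_i)$ is exactly the quantity appearing in his relaxed entropy condition. I would first look for a strict inequality built into his admissibility conditions, such as a positive dissipation parameter or the choice of $\varepsilon$. Failing that, I would argue by contradiction: if every interface were non-dissipative, the relaxed conditions would force $q_i-p(\rho_i)=\frac12\rho_iu_0^2$ with $\rho_i=\rho_0$, contradicting $\varepsilon>0$.
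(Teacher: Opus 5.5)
Your plan has the same architecture as the paper's proof, and your Jensen step for $\Icoef_{\mathrm H}>0$ is complete as written. Your energy computation is also the paper's argument in different form. For the symmetric fan with speeds $\mp b,\mp a$ one has $-\sum_r\nu_r[\overline H]_r=2b(\overline H_1-H_0)-2a(\overline H_1-\overline H_2)=-2\gamma_{\mathrm H}$. Your summed interface inequality is therefore exactly Horimoto's reduced entropy condition $b(\overline H_1-H_0)\le a(\overline H_1-\overline H_2)$ from \cite[Lemma~3.3, (3.10)]{Horimoto2026}. The paper quotes that condition directly instead of re-deriving it through the telescoping identity.

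The genuine gap is strictness, and it matters. With only $\overline\Ecoef_{\mathrm H}\le 0$ you still get $\overline\Kcoef_{\mathrm H}\ge 2\Icoef_{\mathrm H}>0$ and $\overline\Dcoef_{\mathrm H}>0$. However, you lose both $\overline\Ecoef_{\mathrm H}<0$ and the strict bound $\Icoef_{\mathrm H}<\overline\Kcoef_{\mathrm H}/2$, which are two of the claims to be proved.

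Your fallback contradiction argument does not close the gap. Equality at an interface is a single scalar relation on relaxed variables ($q_i$, the stresses, the relaxed fluxes), and these are not tied to $\rho_i$ and $v_i$ as they would be for genuine Euler states. In the symmetric ansatz, each equality can be solved for the relaxed energy of the adjacent turbulent state at any $\varepsilon>0$. So the equalities alone produce no contradiction with $\varepsilon>0$. You give no argument that they force $q_i-p(\rho_i)=\frac12\rho_iu_0^2$ with $\rho_i=\rho_0$, or that the open subsolution conditions would then fail. Consistently with this, Theorem \ref{thm:contact-agreement} needs strictness of at least one interface inequality as a separate hypothesis and otherwise leaves the boundary ray $\Icoef=\Kcoef/2$ open.

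Strictness has to come from Horimoto's specific construction, which is the first option you listed. The paper takes it from the proof of \cite[Lemma~3.4]{Horimoto2026}, which shows $\gamma_{\mathrm H}=a(\overline H_1-\overline H_2)-b(\overline H_1-H_0)>0$. This gives $\overline\Ecoef_{\mathrm H}=-2\gamma_{\mathrm H}<0$. The decomposition then gives $\overline\Kcoef_{\mathrm H}=2\Icoef_{\mathrm H}+4\gamma_{\mathrm H}$, $\overline\Dcoef_{\mathrm H}=2\Icoef_{\mathrm H}+2\gamma_{\mathrm H}$ and $\overline\Kcoef_{\mathrm H}/2-\Icoef_{\mathrm H}=2\gamma_{\mathrm H}>0$.
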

\begin{proof}
Recall from \eqref{eq:Horimoto-strict-convexity} that the pressure potential $P$ is strictly convex.  Equation
\eqref{eq:Horimoto-mass-identity} and strict Jensen convexity then give $(b-a)P(\rho_1)+aP(\rho_2) >  bP(\rho_0)$,
because $\rho_1$, $\rho_2$ are not both equal to $\rho_0$. Hence $\Icoef_{\mathrm H}>0$.

From the reduced entropy condition (3.10) in \cite[Lemma~3.3]{Horimoto2026}, we deduce that
\begin{equation*}
  b(\overline H_1-H_0)   \leq  a(\overline H_1-\overline H_2).
\end{equation*}
The proof of \cite[Lemma~3.4]{Horimoto2026} shows that this inequality is strict, so that the entropy gap
\begin{equation*}
  \gamma_{\mathrm H} :=  a(\overline H_1-\overline H_2) -  b(\overline H_1-H_0)  >0.
\end{equation*}
We compute 
\begin{equation}  \label{eq:Horimoto-E-gap}
  \overline\Ecoef_{\mathrm H}  =  -2\gamma_{\mathrm H}<0.
\end{equation}
Expressions \eqref{eq:Horimoto-relaxed-KI} and \eqref{eq:Horimoto-E-gap} combine to yield $  \overline\Kcoef_{\mathrm H}  =  2\Icoef_{\mathrm H}  +  4\gamma_{\mathrm H}>0$ and $\overline\Dcoef_{\mathrm H} =  2\Icoef_{\mathrm H}  +  2\gamma_{\mathrm H}>0$.
In particular, $  \frac{\overline\Kcoef_{\mathrm H}}2 -  \Icoef_{\mathrm H} =  2\gamma_{\mathrm H}>0$,
and \eqref{eq:Horimoto-agreement-sector} follows.
\end{proof}
From \eqref{eq:Horimoto-agreement-sector} a relaxed Horimoto fan therefore lies in the upper part of the sector in Figure \ref{fig:KI-phase-diagram} where entropy rate and action rate criteria both prefer the fan. No specially designed equation of state or computer-assisted certification is required.

For illustration, the point $H$ in  Figure \ref{fig:KI-phase-diagram}  marks the relaxed Horimoto fan for a representative parameter choice ($\gamma=2$, $\rho_0=u_0=1$, $\varepsilon=1/16$, $a=1/8$). The arrow indicates the range of the sector spanned by further members of this family, which approach the $\Kcoef$-axis as $\varepsilon\to 0$ and the boundary $\Icoef=\bar{\Kcoef}/2$ as the admissibility gap $\gamma_{\mathrm H}\to 0$. \\

\noindent \textbf{Individual exact realizations.} 
The previous Proposition concerns the piecewise constant relaxed fan. The exact solutions obtained from Horimoto's fan differ from the solutions discussed above, like \cite{KrupaSzekelyhidi}, in that Markfelder's realization theorem does not prescribe one fixed value of the kinetic energy density $\frac{|m|^2}{2\rho}$ in each turbulent region.  The constant fan coefficients $\Ecoef$, $\Dcoef$ of Section \ref{sec:finite-dimensional-formulas} are therefore not directly attached to an individual Horimoto realization, and the comparison for exact realizations therefore requires the additional one-sided energy comparison discussed below.
\begin{assumption} \label{ass:Horimoto-normalization}
Let $(\rho_{\mathrm H},m)$ be a particular exact realization from \cite{Horimoto2026}.  We assume the tangentially periodic normalization used in
Remark \ref{rem:renormal}: either $m$ is periodic in the tangential variable with period $\ell$, or the corresponding per-unit-length spatial means exist and the cutoff errors in the energy flux vanish.
\end{assumption}
Set
\begin{equation*}
  H_m  :=  \frac{|m|^2}{2\rho_{\mathrm H}}  +  P(\rho_{\mathrm H}),  \qquad   \Lambda_m  :=  \frac{|m|^2}{2\rho_{\mathrm H}} -  P(\rho_{\mathrm H}).
\end{equation*}
Let $\overline H$ and $\overline\Lambda$ denote the piecewise
constant relaxed densities defined by
\eqref{eq:Horimoto-relaxed-Lagrangian}.
Because the exact and relaxed solutions have the same density,
\begin{equation} \label{eq:Horimoto-action-energy-difference}
  \Lambda_m-\overline\Lambda =   H_m-\overline H.
\end{equation}
We also define the spatially averaged difference
  $G_m(t)   := \int_{\mathbb T_\ell\times\mathbb R}
  \bigl(    H_m-\overline H \bigr)(t,x,y)
  \,dx\,dy$. For an exact realization we obtain that the entropy and action rate-criteria prefer the exact realization to the classical contact:
\begin{proposition} \label{prop:Horimoto-exact-selection}
Under Assumption \ref{ass:Horimoto-normalization}, $G_m$ has a non-increasing representative and
\begin{equation} \label{eq:Horimoto-Gm-negative}
  G_m(t)\leq0  \qquad  \text{for a.e. }t>0.
\end{equation}
Consequently, at every Lebesgue time $t>0$, $\Energy_m(t)-\Energy_{\mathrm c}(t) \leq   \ell t\,   \overline\Ecoef_{\mathrm H} <0$.
and for every $t>0$, $\A_{\mathrm c}(t)-\A_m(t)   \geq  \frac{\ell t^2}{2}   \overline\Dcoef_{\mathrm H} >0$.
\end{proposition}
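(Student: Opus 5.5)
The plan is to reduce everything to the sign and monotonicity of $G_m$. The relaxed fan then supplies the explicit coefficients $\overline\Ecoef_{\mathrm H}$, $\overline\Dcoef_{\mathrm H}$ through the same computation as in Theorems \ref{thm:finite-dimensional-energy} and \ref{thm:finite-dimensional-action}.

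\emph{Step 1 (splitting).} Write
\[
\Energy_m(t)-\Energy_{\mathrm c}(t)=G_m(t)+\int_{\T_\ell\times\R}\bigl(\overline H-H^{\mathrm c}\bigr)\,dx\,dy .
\]
The relaxed densities $\overline H$ are piecewise constant in $\xi=y/t$ and agree with the contact outside $(-b,b)$. The change of variables in the proof of Theorem \ref{thm:finite-dimensional-energy} therefore applies verbatim and gives the second term $=\ell t\,\overline\Ecoef_{\mathrm H}$. For the action, the identity \eqref{eq:Horimoto-action-energy-difference} gives $\Lambda_m-\Lambda^{\mathrm c}=(\overline\Lambda-\Lambda^{\mathrm c})+(H_m-\overline H)$. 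Integrating in space and over $(0,t)$, the computation of Theorem \ref{thm:finite-dimensional-action} yields
\[
\A_{\mathrm c}(t)-\A_m(t)=\frac{\ell t^2}{2}\,\overline\Dcoef_{\mathrm H}-\int_0^t G_m(\tau)\,d\tau .
\]
Once \eqref{eq:Horimoto-Gm-negative} holds, both claimed inequalities follow immediately. The strict signs come from Proposition \ref{prop:Horimoto-relaxed-selection}, and the energy statement holds at Lebesgue times, where $G_m(t)$ is defined pointwise.

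\emph{Step 2 (monotonicity of $G_m$).} I would test the local energy inequality \eqref{eq:energy-inequality} for the exact realization against $\phi(t)\chi_R(y)$, with $\phi\ge0$. I would subtract the relaxed energy balance that the relaxed fan satisfies in each wedge, together with its interface relations; the exact and relaxed solutions share the same density and the same (prescribed) relaxed entropy flux structure in Markfelder's framework. Assumption \ref{ass:Horimoto-normalization} removes the tangential boundary terms. The cutoff errors in $y$ vanish as $R\to\infty$, because both solutions coincide with the contact data outside the fan. This gives $\int \phi'(t)G_m(t)\,dt\ge 0$ for all such $\phi$, i.e.\ $G_m'\le0$ in $\mathcal D'(0,\infty)$. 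Hence $G_m$ has a non-increasing representative.

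\emph{Step 3 (initial value).} Both $H_m$ and $\overline H$ are bounded. They coincide outside the strip $|y|\le bt$, so $|G_m(t)|\le C\ell\, b\,t\to0$ as $t\to0^+$. A non-increasing function with limit $0$ at $0^+$ is nonpositive, which proves \eqref{eq:Horimoto-Gm-negative}.

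The main obstacle is Step 2: identifying precisely which relaxed energy identity the fan satisfies and checking that the exact energy flux matches the relaxed one, so that the subtraction leaves only the sign-definite dissipation term. I would first try to read this directly off Markfelder's realization theorem \cite[Theorem~1.3]{Horimoto2026}. If that fails, I would fall back on the pointwise a.e.\ bound $\frac{|m|^2}{2\rho_{\mathrm H}}\le q_i-p(\rho_i)$ in the turbulent regions. That bound alone already gives $G_m\le0$, and hence the two displayed inequalities, without needing the monotonicity.
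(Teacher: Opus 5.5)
Your Steps 1 and 3, and the overall architecture (split off the relaxed fan, show that $G_m$ is non-increasing with $G_m(0^+)=0$, then conclude), match the paper's proof. The gap is in Step 2. Testing the local energy inequality \eqref{eq:energy-inequality} of the exact realization and subtracting the relaxed fan's balance cannot give the sign you need, because the relaxed fan is itself strictly dissipative.

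Write $F_m:=(H_m+p(\rho_{\mathrm H}))\,m/\rho_{\mathrm H}$, and let $\mu_m:=-(\partial_t H_m+\diver F_m)\ge0$ and $\mu:=-(\partial_t\overline H+\diver\overline F)\ge0$ be the two dissipation measures. Your subtraction then gives $\partial_t(H_m-\overline H)+\diver(F_m-\overline F)=\mu-\mu_m$, which has no sign.

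Quantitatively, entropicity of the exact solution only yields $\frac{d}{dt}(\Energy_m-\Energy_{\mathrm c})\le0$. Hence $G_m'\le-\ell\overline\Ecoef_{\mathrm H}=2\ell\gamma_{\mathrm H}>0$, and so only $G_m(t)\le 2\ell\gamma_{\mathrm H}t$. This is compatible with $G_m(t)=2\ell\gamma_{\mathrm H}t>0$. It loses the strict energy bound entirely. For the action it gives only $\A_{\mathrm c}(t)-\A_m(t)\ge \ell t^2\,\Icoef_{\mathrm H}$, not $\frac{\ell t^2}{2}\overline\Dcoef_{\mathrm H}$. The remark that the exact and relaxed fields share the density does not rescue this, since the exact flux $F_m$ does not coincide with $\overline F$.

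The missing ingredient is a realization-specific one-sided comparison $\mu_m\ge\mu$: the exact solution must dissipate locally at least as much as the relaxed fan. The paper takes exactly this from Markfelder's construction, \cite[Proposition 4.5, (4.9)]{Markfelder2024}, i.e.\ inequality \eqref{eq:Horimoto-Markfelder-comparison}. It is an additional property of the construction, not part of the existence statement recorded in \cite[Proposition 2.5]{Horimoto2026}. Once you have it, your cutoff argument in Step 2 goes through essentially verbatim.

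Your fallback, the pointwise bound $\frac{|m|^2}{2\rho_{\mathrm H}}\le q_i-p(\rho_i)$, would indeed give $G_m\le0$ directly. However, nothing in the cited framework supplies it. The paper stresses that Markfelder's realization does not prescribe $\frac{|m|^2}{2\rho}$ in the turbulent regions. As stated, the fallback is an extra hypothesis rather than a proof.
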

\begin{proof}
Proposition~2.5 in \cite{Horimoto2026} records the existence of convex integration solutions from Markfelder's realization theorem.  For the additional comparison we use  \cite[Proposition 4.5, equation (4.9)]{Markfelder2024}, which gives
\begin{align}
  &\int   \left[H_m\,\partial_t\psi +    \left(     H_m+p(\rho_{\mathrm H})    \right)    \frac{m}{\rho_{\mathrm H}}
    \cdot\nabla\psi  \right]  \,dx\,dy\,dt  \geq  \int  \left[    \overline H\,\partial_t\psi    +    \overline F\cdot\nabla\psi \right]  \,dx\,dy\,dt  \label{eq:Horimoto-Markfelder-comparison}
\end{align}
for every non-negative test function $\psi$.

Using tangential periodicity or per-unit-length cutoffs, choose spatial cutoffs that are equal to one on the expanding fan and pass to the limit.  This readily shows that $\partial_tG_m\leq0$ in distributions.

The exact and relaxed fields agree outside the expanding fan, whose cross-sectional measure per tangential period is of the order of $t$.  Since the
fields are bounded, $ G_m(t)\rightarrow0$ as $t \to0^+$.
The non-increasing representative therefore satisfies \eqref{eq:Horimoto-Gm-negative}. Now, on using \eqref{eq:Horimoto-action-energy-difference},
$$  \A_m(T)-\overline\A_{\mathrm H}(T) = \int_0^T G_m(t)\,dt  \leq0,$$
which in combination with $  \A_{\mathrm c}(T)-\overline\A_{\mathrm H}(T)=\frac{\ell T^2}{2}  \overline\Dcoef_{\mathrm H}$ proves the required positivity of the action difference.

Similarly, we find $$  \Energy_m(t)-\Energy_{\mathrm c}(t) = G_m(t) + \ell t\overline\Ecoef_{\mathrm H},$$ and \eqref{eq:Horimoto-Gm-negative} then shows the negativity of the energy difference.
\end{proof}
Without Assumption \ref{ass:Horimoto-normalization}, Horimoto's whole-space existence theorem does not by itself define a scalar global
action or total-energy rate.  The local distributional comparison \eqref{eq:Horimoto-Markfelder-comparison} remains valid, but converting
it into the scalar inequalities above requires the assumption.

We finally interpret the effective position of a realization in Figure \ref{fig:KI-phase-diagram}.
For almost every $t>0$, we may define
\begin{equation*}
  \Kcoef_m(t)  :=  \overline\Kcoef_{\mathrm H} -  \frac{2G_m(t)}{\ell t}.
\end{equation*}
Moreover, $\Icoef_m(t)=\Icoef_{\mathrm H}$ is independent of time. Since $G_m(t)\leq0$, we find  $\Kcoef_m(t)  \geq  \overline\Kcoef_{\mathrm H} >  2\Icoef_{\mathrm H}>0$.
The point $\bigl( \Kcoef_m(t),\Icoef_{\mathrm H}  \bigr)$ then lies on the horizontal ray $\Icoef=\Icoef_{\mathrm H}$
to the right of the relaxed Horimoto point. $ \bigl(    \Kcoef_m(t),\Icoef_{\mathrm H}  \bigr)$ provides the exact instantaneous comparisons:
\begin{equation*}
  \Dcoef_m^{\mathrm{eff}}(t) :=   \frac{1}{\ell t} \frac{d}{dt}  \left(    \A_{\mathrm c}-\A_m  \right)(t)  =  \frac12\Kcoef_m(t)  + \Icoef_{\mathrm H},
  \ \
  \Ecoef_m^{\mathrm{eff}}(t) :=  \frac{  \Energy_m(t)-\Energy_{\mathrm c}(t)
  }{\ell t}  = -\frac12\Kcoef_m(t)  +  \Icoef_{\mathrm H}.
\end{equation*}
It remains strictly inside the sector in which both criteria prefer the wild solution.

Unlike for the fan solutions considered above, however, $\Kcoef_m$ may depend on the realization and on time.

\appendix

\section{Appendix: Exact symbolic verification}\label{sec:symbolic-verification}

The computational statements of this paper are verified by two independent ancillary programs. The Maple worksheet
\texttt{KS\_coefficients.mpl} evaluates the four coefficients $\widehat{\Dcoef}_{\mathrm{KS}}$, $\widehat{\Ecoef}_{\mathrm{KS}}$,
$\widehat{\Kcoef}_{\mathrm{KS}}$, $\widehat{\Icoef}_{\mathrm{KS}}$ at the rational parameter vector, and the Python script
\texttt{certify.py} certifies the correction bound and the Lipschitz constants required for the passage to the exact fan
(Subsection \ref{subsec:certification}). In both programs all quantities are represented as exact rational numbers, and
floating-point output is produced only after every relevant inequality and identity has been verified symbolically.\\

\noindent\textbf{Maple worksheet.} The worksheet \texttt{KS\_coefficients.mpl}\footnote{Available at \url{https://mat1.uibk.ac.at/heiko/KS_coefficients.mpl}.} performs the following steps, aborting with an error if any check fails:

\begin{enumerate}[label=\textup{(\roman*)}]   
\item it loads the exact rational data of  Table \ref{tab:KS-rational-data} and verifies the wave ordering   $\widehat\nu_-<\widehat\nu_1<\widehat s<\widehat\nu_2<\widehat\nu_+$,  which underlies the overlap formulas of  Proposition \ref{prop:KS-action-coefficient},
  \item it forms the Lagrangian and mechanical energy densities   $\Lambda_\pm^{\mathrm c}$, $H_\pm^{\mathrm c}$,  $\widehat\Lambda_i^{\mathrm{KS}}$, $\widehat H_i^{\mathrm{KS}}$ and  evaluates $\widehat{\Dcoef}_{\mathrm{KS}}$,  $\widehat{\Ecoef}_{\mathrm{KS}}$ from  \eqref{eq:three-region-action-coefficient},  \eqref{eq:three-region-entropy-coefficient}, verifying the strict
  sign bounds \eqref{eq:KS-rational-sign-bounds} by symbolic checks,
  \item it evaluates $\widehat{\Kcoef}_{\mathrm{KS}}$,   $\widehat{\Icoef}_{\mathrm{KS}}$ both from their definitions and
  from the identities $\Kcoef=\Dcoef-\Ecoef$,   $\Icoef=(\Dcoef+\Ecoef)/2$, and verifies exactly that the two
  evaluations coincide and that the decomposition  $\Dcoef=\tfrac12\Kcoef+\Icoef$, $\Ecoef=-\tfrac12\Kcoef+\Icoef$ of  Theorem \ref{thm:kinetic-internal-decomposition} holds,
  \item it verifies the agreement inequalities  \eqref{eq:KS-rational-agreement} by guarded symbolic checks,  together with the identity  $\widehat{\Kcoef}_{\mathrm{KS}}/2  -|\widehat{\Icoef}_{\mathrm{KS}}| =-\widehat{\Ecoef}_{\mathrm{KS}}$, valid because
  $\widehat{\Icoef}_{\mathrm{KS}}>0$,
  \item it then prints the twenty-digit decimal expansions reported in \eqref{eq:KS-rational-values}, \eqref{eq:KS-rational-KI-values} and in  \eqref{eq:KS-agreement-margin}.
\end{enumerate}
An optional diagnostic section evaluates the four regional contributions to each coefficient separately.\\

\noindent \textbf{Passage to the exact fan.} The worksheet proves the strict inequalities \eqref{eq:KS-rational-sign-bounds} unconditionally at the rational parameter vector. To transfer these signs to the exact corrected fan, requires supplementation by the certified bounds
\begin{equation*}
  \delta_{\mathrm{KS}}\leq\delta_0, \qquad  \sup_U|\nabla\Dcoef_{\mathrm{KS}}|\leq L_D,  \qquad  \sup_U|\nabla\Ecoef_{\mathrm{KS}}|\leq L_E.
\end{equation*}
By Theorem \ref{thm:KS-conditional-exact-selection}, the exact-fan inequalities then follow whenever
\begin{equation}  \label{eq:certified-action-condition}
  L_D\,\delta_0<6609
\end{equation}
and
\begin{equation}  \label{eq:certified-entropy-condition}
  L_E\,\delta_0<1407.
\end{equation}

The symbolic calculations (using the Maple worksheet) are statements about the rational parameter vector. Corresponding statements for the exact fan are conditional only on the certified correction and gradient estimates specified in \eqref{eq:certified-action-condition},
\eqref{eq:certified-entropy-condition}. Both conditions are established next.

\subsection{Certification of the correction size and  verification of \eqref{eq:certified-action-condition}, \eqref{eq:certified-entropy-condition}}
\label{subsec:certification}

In this subsection we verify the conditions \eqref{eq:certified-action-condition}, \eqref{eq:certified-entropy-condition} directly from
the computational material accompanying the Krupa-Sz\'ekelyhidi construction \cite{KrupaSzekelyhidi,KScode}. This removes the conditional
character of Theorem \ref{thm:KS-conditional-exact-selection} for the explicit KS fan. All quantities below are computed in exact rational arithmetic. Square roots are bounded by integer-square-root enclosures and positive definiteness is certified by exact $LDL^{\top}$ factorizations. The complete verification is performed by the ancillary script \texttt{certify.py}, which depends only on the Python standard library and reruns from scratch in a few seconds.\footnote{Available at \url{https://mat1.uibk.ac.at/heiko/certify.py}. The script also verifies in exact arithmetic the symbolic checks of \cite{KScode} at the rational vector, including the identity of the input data of Table \ref{tab:KS-rational-data} with the appendix constants  of \cite{KrupaSzekelyhidi}, the exact validity of the Rankine--Hugoniot conditions at the interfaces $\nu_-$ and $\nu_1$, the contact identifications, and all $27$ strict inequality constraints not involving the corrected variables.}\\

In the parametrization of \cite{KrupaSzekelyhidi}, each turbulent region carries the relaxed velocity $v_i=(\alpha_i,\beta_i)$, the trace-free stress $u_i$ encoded by $(\gamma_i,\delta_i)$, the kinetic level $C_i$, and the values $e_i$, $e_i'$ of the designed internal energy function at $\rho_i$. Inspection of the equality constraints shows that the six corrected variables
$$x=(\alpha_3,\beta_3,\delta_3,\rho_3,\nu_+,\nu_2) $$
of \eqref{eq:KS-parameter-vector} enter only the Rankine-Hugoniot conditions at the two right-most interfaces $\nu_2$ and $\nu_+$. The remaining Rankine-Hugoniot conditions at $\nu_-$ and $\nu_1$ and the contact identifications hold \emph{exactly} at the rational vector $\widehat x$ and do not involve $x$. Let $\Gamma:\mathbb{R}^6\to\mathbb{R}^6$ denote the vector of these six residuals, fixing all remaining parameters at the above rational values and with the normalization $e''(\widehat\rho_3)=0$ adopted in \cite{KrupaSzekelyhidi,KScode}, so that $e'(\rho_3)\equiv\widehat e_3'$ near $\widehat\rho_3$. A zero of $\Gamma$ corresponds to satisfying all equality constraints exactly, and the exact vector $x^*$ of parameters from \cite{KrupaSzekelyhidi} is a zero of $\Gamma$.

We summarize the results of the exact computations:
\begin{proposition} \label{prop:certdata}
The following statements hold in exact rational arithmetic.
\begin{itemize}
\item[(i)] The rational inputs of Table \ref{tab:KS-rational-data} coincide exactly with the constants from the appendix of  \cite{KrupaSzekelyhidi}. The Rankine--Hugoniot conditions at $\nu_-$ and $\nu_1$ hold exactly at $\widehat x$.
\item[(ii)] Each component of $\Gamma(\widehat x)$ is smaller than $10^{-11}$ in absolute value, and
$\|\Gamma(\widehat x)\|_2\le 3.18\cdot 10^{-12}$.
\item[(iii)] The Jacobian $D\Gamma(\widehat x)$, computed exactly, coincides (up to a global sign) with the matrix \texttt{DGamma} of \cite{KScode}, and
$$
\sigma_{\min}\bigl(D\Gamma(\widehat x)\bigr)\;\ge\;2.155 .
$$
\item[(iv)] The exact Newton step $\eta:=\bigl\|D\Gamma(\widehat x)^{-1}\Gamma(\widehat x)\bigr\|_2$
satisfies $\eta\le 1.225\cdot 10^{-13}$.
\item[(v)] On the box $\overline B_\infty(\widehat x,2^{-20})$ the Jacobian is Lipschitz with constant $K\le 52$ with respect to the Euclidean norm.
\end{itemize}
\end{proposition}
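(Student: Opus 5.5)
The statement consists of five computational claims. I would prove each one by an exact rational computation, organized so that every inequality reduces to a sign or integer comparison carried out in $\mathbb Q$.

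For (i), I would load the dyadic and rational constants of Table \ref{tab:KS-rational-data} as exact fractions and compare them entry by entry with the appendix constants of \cite{KrupaSzekelyhidi,KScode}. I would then substitute the constants into the Rankine--Hugoniot relations at $\nu_-$ and $\nu_1$. These relations involve only the fixed parameters and the rational speeds $\widehat\nu_-,\widehat\nu_1$, which were defined as their exact solutions, so every residual must reduce to the rational number $0$. For (ii), I would evaluate the six components of $\Gamma(\widehat x)$ in exact arithmetic. If a residual contains a square root, I would replace it by an integer-square-root enclosure $\lfloor\sqrt{N}\rfloor\le\sqrt N<\lfloor\sqrt N\rfloor+1$ at a suitable dyadic scale. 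This gives rational intervals, and I would check that each interval lies inside $(-10^{-11},10^{-11})$. Summing the squared upper endpoints then bounds $\|\Gamma(\widehat x)\|_2^2$, and I would compare this rationally with $(3.18\cdot10^{-12})^2$.

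For (iii), I would differentiate the residual map symbolically with respect to the six variables $x=(\alpha_3,\beta_3,\delta_3,\rho_3,\nu_+,\nu_2)$, using $e'(\rho_3)\equiv\widehat e_3'$ so that no further thermodynamic derivatives appear. I would evaluate $D\Gamma(\widehat x)$ exactly and compare it with \texttt{DGamma} up to a global sign. To get the singular value bound, I would set $A=D\Gamma(\widehat x)$ and $\mu=2.155^2$. It then suffices to certify that $A^\top A-\mu\,\Id$ is positive definite. I would do this with an exact $LDL^\top$ factorization, checking that all six pivots of $D$ are positive rationals. This gives $\sigma_{\min}(A)\ge 2.155$.

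For (iv), I would solve $A\eta_{\mathrm{vec}}=\Gamma(\widehat x)$ exactly by rational Gaussian elimination, or on the interval enclosures if square roots entered in (ii). I would then verify $\|\eta_{\mathrm{vec}}\|_2^2\le(1.225\cdot10^{-13})^2$ as a comparison of rationals. As a fallback, the cruder bound $\eta\le\|\Gamma(\widehat x)\|_2/\sigma_{\min}\le 3.18\cdot10^{-12}/2.155\approx1.48\cdot10^{-12}$ is available, but it is too weak to give the stated value, so the exact solve is needed.

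Step (v) is the main obstacle, because it is a supremum over a box rather than a point evaluation. The residuals are polynomial, or at worst rational, in $x$, since the designed energy is linearised in $\rho_3$ near $\widehat\rho_3$. I would therefore bound the second derivatives $\partial_k\partial_l\Gamma_m$ on $\overline B_\infty(\widehat x,2^{-20})$ by interval arithmetic over exact rational intervals. Then I would use
\[
\|D\Gamma(x)-D\Gamma(y)\|_2\le\Bigl(\sum_{m,k}\Bigl(\sum_l\sup|\partial_l\partial_k\Gamma_m|\,|x_l-y_l|\Bigr)^2\Bigr)^{1/2},
\]
and bound this by $\bigl(\sum_{m,k,l}\sup|\partial_k\partial_l\Gamma_m|^2\bigr)^{1/2}|x-y|_2$ using Cauchy--Schwarz. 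Finally I would check rationally that the resulting Frobenius-type constant is at most $52$. If interval overestimation makes this fail, I would first try splitting the box into subboxes, and only then a tighter mean-value argument on the explicit monomials.
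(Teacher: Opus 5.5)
Your proposal is correct and follows essentially the same route as the paper's certification script \texttt{certify.py}. Both use exact rational evaluation of the inputs and residuals, an exact $LDL^{\top}$ certificate that $D\Gamma^{\top}D\Gamma-\lambda I$ is positive definite (the paper bisects in $\lambda$, whereas your single test at $\lambda=2.155^2$ already suffices), exact Gaussian elimination for the Newton step, and a bound on how much the Jacobian varies over the box. For (v), the paper evaluates the affine gradients of the quadratic Jacobian entries at the vertices of $\overline B_\infty(\widehat x,2^{-20})$, and this is exactly what your interval evaluation of the second derivatives reduces to for these low-degree polynomial residuals.
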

These estimates imply the following for the exact solution: 
\begin{theorem}
\label{thm:deltacert}
Set $\delta_0:=2\eta\le 2.45\cdot 10^{-13}$. Then $\Gamma$ has a zero $x^*$ with
$$ |x^*-\widehat x|\;\le\;\delta_0,$$
and this zero is unique in the ball of radius $0.08$ around $\widehat x$. In particular, it is the exact KS parameter vector, and
$$\delta_{KS}\;\le\;\delta_0\;\le\;2.45\cdot 10^{-13}.$$
Moreover, the wave ordering $\nu_1<s<\nu_2^*<\nu_+^*$ of Lemma \ref{lem:KS-ordering-stability}  is
preserved, and every strict constraint of the fan subsolution involving the corrected variables (the two subsolution conditions in the third region, the admissibility inequalities at $\nu_2$ and $\nu_+$, the speed ordering, the convexity conditions involving $(\rho_3,e_3)$, and positivity of $\rho_3$) persists at $x^*$: each holds at $\widehat x$ with slack at least $0.759$ beyond the strictness margin $\varepsilon=\tfrac13$ of \cite{KScode}, while its certified variation along the correction is at most $1.4\cdot 10^{-8}$. Consequently $x^*$ is a strict admissible fan subsolution.
\end{theorem}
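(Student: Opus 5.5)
The plan is a Newton--Kantorovich argument in the form of a contraction mapping, using only the data of Proposition \ref{prop:certdata}. Write $A:=D\Gamma(\widehat x)$; by (iii), $A$ is invertible with $\|A^{-1}\|_2\le 1/2.155$. Define the simplified Newton map $T(x):=x-A^{-1}\Gamma(x)$, whose fixed points are exactly the zeros of $\Gamma$.

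First, show that $T$ is a contraction on the closed Euclidean ball $\overline B:=\overline B_2(\widehat x,\delta_0)$, with $\delta_0=2\eta$. For $x,y\in\overline B$ we have
\[
T(x)-T(y)=A^{-1}\int_0^1\bigl(A-D\Gamma(y+\tau(x-y))\bigr)\,d\tau\,(x-y).
\]
Since $\overline B\subset\overline B_\infty(\widehat x,2^{-20})$, property (v) applies and gives
\[
\|T(x)-T(y)\|\le \|A^{-1}\|\,K\,\delta_0\,\|x-y\|\le \frac{52\cdot 2.45\cdot 10^{-13}}{2.155}\,\|x-y\|=:\theta\|x-y\|,
\]
with $\theta\ll 1/2$. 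Moreover,
\[
\|T(x)-\widehat x\|\le \|T(\widehat x)-\widehat x\|+\theta\|x-\widehat x\|\le \eta+\theta\cdot 2\eta\le 2\eta,
\]
so $T$ maps $\overline B$ into itself. Banach's fixed point theorem then yields a zero $x^*$ with $|x^*-\widehat x|\le\delta_0$.

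The uniqueness statement in the ball of radius $0.08$ is the main obstacle. Property (v) only covers the tiny box of radius $2^{-20}$, whereas uniqueness at radius $0.08$ needs Jacobian control on a much larger set. I would first try to extend the Lipschitz bound. The residuals are polynomial (or rational with denominators bounded away from zero) in the six variables, given the normalization $e'(\rho_3)\equiv\widehat e_3'$. So one can bound second derivatives crudely on $\overline B_2(\widehat x,0.08)$ by some $K'$, computed in exact arithmetic in the same script. If $K'\cdot 0.08<2.155$, then for any two zeros $x^*,y^*$,
\[
0=A^{-1}\bigl(\Gamma(x^*)-\Gamma(y^*)\bigr)=(x^*-y^*)+A^{-1}\int_0^1\bigl(D\Gamma-A\bigr)\,d\tau\,(x^*-y^*),
\]
which forces $x^*=y^*$ because the integral term has norm less than $\|x^*-y^*\|$. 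Identifying $x^*$ with the exact KS vector then follows from this uniqueness, since the KS vector is itself a zero of $\Gamma$ close to $\widehat x$.

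The remaining claims are perturbative. The wave ordering follows from Lemma \ref{lem:KS-ordering-stability}, since $\delta_0\ll 5.5$. For each strict constraint involving $x$, bound its gradient on $\overline B$ in rational arithmetic, multiply by $\delta_0$ to get a variation of at most $1.4\cdot10^{-8}$, and compare this with the certified slack $0.759$. The remaining strict constraints hold at $\widehat x$ and are independent of $x$. Hence $x^*$ is a strict admissible fan subsolution.
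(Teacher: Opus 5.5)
Your existence step is sound and is essentially the paper's argument. The paper applies the Newton--Kantorovich theorem with $h=\beta K\eta\le 2.96\cdot 10^{-12}$ and obtains a zero within $r_-\le 2\eta=\delta_0$. That is exactly what your simplified-Newton contraction on $\overline B_2(\widehat x,\delta_0)$ delivers. The ordering and persistence parts also match the paper.

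The gap is in the uniqueness step. You are right that (v) only controls $D\Gamma$ on the box of radius $2^{-20}$. The paper simply quotes Kantorovich's radius $r_+=(1+\sqrt{1-2h})/(\beta K)\ge 0.08$, and that theorem only gives uniqueness in $\overline B(\widehat x,r_+)$ intersected with the set where the Lipschitz bound holds. So the paper tacitly uses $K\le 52$ on the larger ball too.

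Your remedy, however, loses a factor of two that the statement cannot afford. You bound $\|D\Gamma(z)-A\|\le 0.08\,K'$ uniformly along the segment between two arbitrary zeros, which requires $K'<2.155/0.08\approx 26.9$. With the only constant in sight, $52\cdot 0.08=4.16>2.155$, so your condition fails. It could only succeed if a bound on the larger ball came out at roughly half the value certified near $\widehat x$. The radius $0.08$ in the statement is precisely the Kantorovich radius $2/(\beta K)\approx 0.0829$. You can only reach it by using that one of the two zeros is the one you have already located within $\delta_0$ of $\widehat x$.

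Concretely, let $y^*$ be a zero with $|y^*-\widehat x|\le r$ and put $z_\tau=x^*+\tau(y^*-x^*)$. Then $|z_\tau-\widehat x|\le(1-\tau)\delta_0+\tau r$, hence
\[
\Bigl\|\int_0^1\bigl(D\Gamma(z_\tau)-A\bigr)\,d\tau\Bigr\|\le K'\,\frac{\delta_0+r}{2},
\qquad
\|y^*-x^*\|\le \beta K'\,\frac{\delta_0+r}{2}\,\|y^*-x^*\|.
\]
This forces $y^*=x^*$ as soon as $\beta K'(\delta_0+r)<2$. With $\beta\le 1/2.155$, $K'\le 52$ and $r=0.08$, the left-hand factor is about $0.965<1$; any $K'<53.8$ suffices.

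So the repair has two parts. First, certify a Lipschitz bound of that size for $D\Gamma$ on $\overline B_2(\widehat x,0.08)$, which neither (v) nor your proposal yet provides; $\Gamma$ contains cubic terms such as $\nu_+\rho_3\alpha_3$, so the constant genuinely depends on the box. Second, use this averaged segment estimate instead of the uniform one.
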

\begin{proof}
By Proposition \ref{prop:certdata}, $\beta:=\|D\Gamma(\widehat x)^{-1}\|_2\le \sigma_{\min}^{-1}\le 2.155^{-1}$ and the Newton--Kantorovich constant satisfies
$$h\;=\;\beta K\eta\;\le\;2.96\cdot 10^{-12}\;\le\;\tfrac12 .$$
The Newton-Kantorovich theorem \cite{OR70} yields a zero $x^*$ in $\overline B(\widehat x,r_-)$ with $r_-=(1-\sqrt{1-2h})/(\beta K)\le 2\eta=\delta_0\le 2^{-20}$, so the Lipschitz bound (v) applies along the whole segment, together with uniqueness in $\overline B(\widehat x,r_+)$, $r_+=(1+\sqrt{1-2h})/(\beta K)\ge 0.08$. Since $\delta_0<\min\{\widehat\nu_2-s,\ (\widehat\nu_+-\widehat\nu_2)/2\} =5.529\ldots$, Lemma \ref{lem:KS-ordering-stability} applies. The persistence of the strict inequalities follows from the Lipschitz continuity of each constraint function. The certified suprema of their gradients on $\overline B_\infty(\widehat x,2^{-20})$ do not exceed $5.48\cdot 10^{4}$, whence the variation is at most $5.48\cdot 10^{4}\cdot\delta_0\le 1.4\cdot 10^{-8}$, far below the minimal slack $0.759$.
\end{proof}

We finally verify the action and entropy rate criteria \eqref{eq:certified-action-condition} and \eqref{eq:certified-entropy-condition}:
\begin{theorem}
\label{thm:unconditional}
On the box $\overline B_\infty(\widehat x,1/32)$, which contains the segment from $\widehat x$ to $x^*$, the coefficients admit the Lipschitz bounds
$$L_D\le 53\,890,\qquad L_E\le 49\,688 .$$
Together with Theorem \ref{thm:deltacert},
$$L_D\,\delta_0\;\le\;1.4\cdot 10^{-8}\;<\;6609, \qquad L_E\,\delta_0\;\le\;1.3\cdot 10^{-8}\;<\;1407,$$
so \eqref{eq:certified-action-condition} and \eqref{eq:certified-entropy-condition} hold. Consequently, the hypotheses of Theorem \ref{thm:KS-conditional-exact-selection} are satisfied and
$$\Dcoef_{\mathrm{KS}}(x^*)\;>\;6609-2\cdot 10^{-8}\;>\;0,\qquad \Ecoef_{\mathrm{KS}}(x^*)\;<\;-1407+2\cdot 10^{-8}\;<\;0 .$$
\end{theorem}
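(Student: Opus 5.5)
The plan is to bound the gradients of $\Dcoef_{\mathrm{KS}}$ and $\Ecoef_{\mathrm{KS}}$ explicitly on the box $\overline B_\infty(\widehat x,1/32)$, using the closed forms \eqref{eq:KS-variable-action-coefficient} and \eqref{eq:KS-variable-entropy-coefficient}. Only three of the six corrected variables, $\rho=\rho_3$, $\nu=\nu_+$, $\widetilde\nu=\nu_2$, enter these expressions. The gradients are therefore the three-component vectors \eqref{eq:KS-D-rho-derivative} and \eqref{eq:KS-E-rho-derivative}; the components in $\alpha_3,\beta_3,\delta_3$ vanish.

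\textbf{Step 1: the box contains the segment.} The box contains the segment from $\widehat x$ to $x^*$, because Theorem \ref{thm:deltacert} gives $|x^*-\widehat x|\le\delta_0\le 2.45\cdot10^{-13}<1/32$ and boxes are convex.

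\textbf{Step 2: bounds on the box.} On the box each of $\rho,\nu,\widetilde\nu$ varies in an interval of half-width $1/32$ around its rational value. Every gradient component is affine in one variable, or a product $(\nu-\widetilde\nu)q_3$ or $(\nu-\widetilde\nu)r_3$. I would therefore bound each absolute value by interval arithmetic in exact rationals: for example, $|\partial_\rho\Dcoef|\le(\widehat\nu_+-\widehat\nu_2+1/16)|q_3|$, and $|\partial_\nu\Dcoef|\le|\Lambda_+^{\mathrm c}-q_3\widehat\rho_3|+|q_3|/32$. The constants $\Lambda_+^{\mathrm c}$, $H_+^{\mathrm c}$, $\widehat\Lambda_2^{\mathrm{KS}}$, $\widehat H_2^{\mathrm{KS}}$, $q_3$, $r_3$ are exact rationals from Table \ref{tab:KS-rational-data}. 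Since $|\widehat e_3|\approx 2288$ dominates, $|q_3|,|r_3|$ are of order $2\cdot10^3$. With $\widehat\nu_+-\widehat\nu_2\approx 21.35$, the $\rho$-derivative is of order $5\cdot10^4$, which is consistent with the claimed constants. The Euclidean norm is bounded by the square root of the sum of the squared component bounds. I would then certify $L_D^2\le 53890^2$ and $L_E^2\le 49688^2$ as rational inequalities, avoiding square roots altogether, as in \texttt{certify.py}. This step is the main obstacle only in bookkeeping: the claimed constants are tight enough that crude bounds such as the $\ell^1$ norm might overshoot. I would first try the componentwise sup bounds combined in $\ell^2$, and fall back on an integer-square-root enclosure if needed.

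\textbf{Step 3: conclusion.} Multiply by $\delta_0$: $53890\cdot 2.45\cdot10^{-13}\le1.4\cdot10^{-8}$ and $49688\cdot2.45\cdot10^{-13}\le1.3\cdot10^{-8}$, which gives \eqref{eq:certified-action-condition} and \eqref{eq:certified-entropy-condition}. Lemma \ref{lem:KS-ordering-stability} holds via Theorem \ref{thm:deltacert}, so all hypotheses of Theorem \ref{thm:KS-conditional-exact-selection} are met. The mean value inequality along the segment, applied as in Propositions \ref{prop:KS-conditional-action} and \ref{prop:KS-conditional-entropy}, gives
\[
\Dcoef_{\mathrm{KS}}(x^*)\ge\widehat{\Dcoef}_{\mathrm{KS}}-L_D\delta_0>6609-2\cdot10^{-8},
\qquad
\Ecoef_{\mathrm{KS}}(x^*)\le\widehat{\Ecoef}_{\mathrm{KS}}+L_E\delta_0<-1407+2\cdot10^{-8},
\]
using the certified bounds \eqref{eq:KS-rational-sign-bounds}.
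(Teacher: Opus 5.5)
Your proposal is correct and follows essentially the same route as the paper: exact rational bounds on the affine gradients of the quadratic coefficients \eqref{eq:KS-variable-action-coefficient}, \eqref{eq:KS-variable-entropy-coefficient} over the box, followed by the mean value inequality with $\delta_0$ from Theorem \ref{thm:deltacert} and the certified values $\widehat{\Dcoef}_{\mathrm{KS}}>6609$, $\widehat{\Ecoef}_{\mathrm{KS}}<-1407$. The one difference is that \texttt{certify.py} evaluates the gradients at the box vertices, which gives the exact supremum because the norm of an affine map is convex, whereas you combine componentwise suprema; here the two agree because every component of both gradients attains its largest absolute value at the same vertex $\rho_3=\widehat\rho_3+\frac1{32}$, $\nu_+=\widehat\nu_++\frac1{32}$, $\nu_2=\widehat\nu_2-\frac1{32}$, so your bound reproduces the stated constants without overshoot.
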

Both the pairwise entropy and action rate criteria prefer the exact Krupa-Sz\'ekelyhidi convex integration family to the classical contact discontinuity.

\begin{remark} \label{rem:sixseven}
The thermodynamic normalization of \cite{KrupaSzekelyhidi,KScode} fixes $C_3^*=\widehat C_3$ and $e''(\widehat\rho_3)=0$, so that the designed internal energy is affine near $\widehat\rho_3$ and 
$$e(\rho_3^*)=\widehat e_3+\widehat e_3'\,(\rho_3^*-\widehat\rho_3),
\qquad \bigl|e(\rho_3^*)-\widehat e_3\bigr| \le \widehat e_3'\,\delta_0\le1.2\cdot 10^{-12},$$
rather than $e(\rho_3^*)=\widehat e_3$ exactly. Repeating the computation of Subsection \ref{sec:KS-geometry} with the corresponding chain rule which replaces the slopes $q_3$ and $r_3$ of $\Lambda_3^{\mathrm{KS}}(\rho_3)$ and $H_3^{\mathrm{KS}}(\rho_3)$ in \eqref{eq:KS-D-rho-derivative} and \eqref{eq:KS-E-rho-derivative} by $q_3-\rho\,\widehat e_3'$ and $r_3+\rho\,\widehat e_3'$ yields the certified constants
$L_D\le 52\,834$ and $L_E\le 48\,644$ on the same box, and the inequalities of Theorem \ref{thm:unconditional} hold verbatim.
\end{remark}

The script \texttt{certify.py} verifies, in order: the input identity of (i), the exact residuals of (ii), the entrywise agreement of the exact Jacobian with \texttt{DGamma} of \cite{KScode}, the lower bound (iii) by bisection with exact $LDL^{\top}$ certificates on $D\Gamma^{\top}D\Gamma-\lambda I$, the Newton step (iv) by exact Gaussian elimination. the Lipschitz bounds (v) and those of Theorem \ref{thm:unconditional} by evaluating the affine gradients of all quadratic quantities at the vertices of the corresponding boxes, and finally the products $L_D\delta_0$, $L_E\delta_0$, the ordering condition of Lemma \ref{lem:KS-ordering-stability}, and the persistence estimates of Theorem \ref{thm:deltacert}. No floating-point operation enters any verified inequality.

%\vspace*{-0.2cm}
%
%\section*{Data Availability Statement} Accompanying source codes are publically available. No datasets were generated or analysed during the current study.
%
%\vspace*{-0.2cm}
%
%\section*{Conflicts of Interest} The authors have no financial or proprietary interests in any material discussed in this article.
%
%\vspace*{-0.3cm}


\begin{thebibliography}{99}

\vspace*{-0.2cm}

\bibitem{BFH} D.~Breit, E.~Feireisl, M.~Hofmanov\'{a}, Solution Semiflow to the Isentropic Euler System, Archive for Rational Mechanics and Analysis \textbf{235} (2020), 167--194.

\vspace*{-0.2cm}


\bibitem{CFKM} E. Chiodaroli, E. Feireisl, O. Kreml, and S. Markfelder, Maximal entropy production principle and the Euler system of gas dynamics, arXiv:2605.26687 (2026).

\vspace*{-0.2cm}

\bibitem{ChiodaroliDeLellisKreml} E. Chiodaroli, C. De Lellis, and O. Kreml, Global ill‐posedness of the isentropic system of gas dynamics, Comm. Pure Appl. Math {\bf 68} (2015), 1157--1190.

\vspace*{-0.2cm}

\bibitem{ChiodaroliKreml} E. Chiodaroli and O. Kreml, On the energy dissipation rate of solutions of the compressible isentropic Euler system,  Arch. Rat. Mech. Anal. {\bf 214} (2014), 1019--1049.     

\vspace*{-0.2cm}

\bibitem{DafermosEntropyRate} C. M. Dafermos, The entropy rate admissibility criterion for solutions of hyperbolic conservation laws,  J. Differ. Equ. {\bf 14} (1973), 202--212.

\vspace*{-0.2cm}

\bibitem{Dafermos26} C. M. Dafermos,  Shock splitting and entropy production, Discrete and Continuous Dynamical Systems \textbf{55} (2026), 484--496.

\vspace*{-0.2cm}

\bibitem{DeLellisSzekelyhidi} C. De Lellis and L. Sz\'ekelyhidi, The Euler equations as a differential inclusion, Ann. Math. {\bf 170} (2009), 1417--1436.

\vspace*{-0.2cm}

\bibitem{Feireisl} E. Feireisl,  Maximal dissipation and  well-posedness for the compressible Euler system, J. Math. Fluid  Mech. {\bf 16} (2014), 447--461.   

\vspace*{-0.2cm}

\bibitem{GK} B. Gebhard and J. J. Kolumb\'{a}n, The Rayleigh--Taylor instability with local energy dissipation, Mathematische Annalen \textbf{393} (2025), 3283--3336.

\vspace*{-0.2cm}

\bibitem{GKKS1} H. Gimperlein, M. Grinfeld, R. J. Knops, and M. Slemrod, The least action admissibility principle, Arch. Rat. Mech. Anal. \textbf{249} (2025), 22.

\vspace*{-0.2cm}

\bibitem{GKKS2} H. Gimperlein, M. Grinfeld, R. J. Knops, and M. Slemrod, On action rate admissibility criteria, Zeitschrift für Angewandte Mathematik und Physik \textbf{77} (2026), 57.

\vspace*{-0.2cm}

\bibitem{GSGW} P.~ Gwiazda, A.~Swierczewska-Gwiazda, E.~Wiedemann, Weak-strong uniqueness for measure-valued solutions of some compressible fluid models. Nonlinearity \textbf{28} (2015), 3873--3890.

\vspace*{-0.2cm}

\bibitem{Horimoto2026} K.~Horimoto, Non-uniqueness of admissible weak solutions to the two-dimensional barotropic compressible Euler system with contact discontinuities, arXiv:2603.23921.

\vspace*{-0.2cm}

\bibitem{KrupaSzekelyhidi}S. G. Krupa and L. Sz\'ekelyhidi Jr., Contact Discontinuities for 2-D Isentropic Euler are Unique in 1-D but Wildly Non-unique Otherwise, Communications in Mathematical Physics {\bf 406}  (2025), 109.

\vspace*{-0.2cm}

 \bibitem{KScode} S. G. Krupa and L. Sz\'ekelyhidi Jr., MATLAB code associated with \cite{KrupaSzekelyhidi},
 \url{https://github.com/sammykrupa/NonUniqueness2DIsentropicEuler}.

\vspace*{-0.2cm}

\bibitem{Markfelder2024} S.~Markfelder, A new convex integration approach for the compressible Euler equations and failure of the local maximal dissipation criterion,  \emph{Nonlinearity} \textbf{37} (2024), 115022.

\vspace*{-0.2cm}


\bibitem{m21} S. Markfelder, Convex Integration Applied to the Multi-Dimensional Compressible Euler Equations, Lecture Notes in Mathematics 2294, Springer Nature, Cham, 2021.

\vspace*{-0.2cm}

\bibitem{MP1} S. Markfelder and V. Pellhammer, Failure of the least action admissibility principle in the context of the compressible Euler equations, SIAM Journal on Mathematical Analysis \textbf{58} (2026), 3585--3598.

\vspace*{-0.2cm}

\bibitem{MP2} S. Markfelder and V. Pellhammer, The local least action criterion fails as a selection criterion for weak solutions of the compressible Euler equations, arXiv:2606.16685.

\vspace*{-0.2cm}

 \bibitem{OR70} J. M. Ortega and W. C. Rheinboldt, Iterative Solution of Nonlinear Equations in Several Variables, Academic Press, New York, 1970.
\end{thebibliography}
\end{document}